\newtheorem{theorem}{Theorem}
\numberwithin{theorem}{section}
\theoremstyle{plain}
\newtheorem{corollary}{Corollary}
\numberwithin{corollary}{section}
\newtheorem{definition}{Definition}
\newtheorem{lemma}{Lemma}
\numberwithin{lemma}{section}
\newtheorem{proposition}{Proposition}
\numberwithin{proposition}{section}
\newtheorem{remark}{Remark}
\numberwithin{remark}{section}
\numberwithin{equation}{section}
\def \e {\varepsilon}
\def \p {\partial}
\def\p{\partial}
\newcommand\restr[2]{{
  \left.\kern-\nulldelimiterspace 
  #1 
  \vphantom{\big|} 
  \right|_{#2} 
  }}
\begin{document}
\title[Cloaking for a quasi-linear elliptic partial differential equation]{Cloaking for a quasi-linear elliptic partial differential equation}
\author[Tuhin Ghosh]{Tuhin Ghosh}
\address{Jockey Club Institute for Advanced Study, HKUST, Hong Kong}
\email{iasghosh@ust.hk}

\author[Karthik Iyer]{Karthik Iyer}
\address{Department of Mathematics, University of Washington, Seattle, WA 98195-4350, USA}
\email{karthik2@uw.edu}
\urladdr{}
\date{}
\begin{abstract}
In this article we  consider cloaking for a quasi-linear elliptic partial differential equation of divergence type defined on a bounded domain in $\mathbb{R}^N$ for $N=2,3$. We show that a perfect cloak can be obtained via a singular change of variables scheme and an approximate  cloak can be achieved via a regular change of variables scheme. These approximate cloaks though non-degenerate are anisotropic.  We also show, within the framework of  homogenization, that it is possible to get isotropic regular approximate cloaks. This work generalizes to quasi-linear settings previous work on cloaking in the context of Electrical Impedance Tomography for the conductivity equation.
\end{abstract}

\maketitle

\section{Introduction and Preliminaries}

The topic of cloaking has long been fascinating and has recently attracted a lot of attention within the mathematical and broader scientific community. A 
region of space is said to be cloaked if its contents along with the existence of such a cloak are invisible to wave detection. One particular route to cloaking that has received considerable interest is that of transformation optics, 
the use of changes of variables
to produce novel optical effects on waves or to facilitate computations.

A transformational optics approach to 
cloaking using the invariance properties of the conductivity equation was first discovered
by Greenleaf, Lassas and Uhlmann \cite{GLU,GLU1} in 2003. Pendry, Schuring and Smith in 2006 \cite{PSS} used a transformation optics approach, using invariance properties of the governing Maxwell's equations to design invisibility cloaks at microwave frequencies. Leonhardt in 2006 \cite{Leon} discusses an optical conformal mapping based cloaking scheme. For the ideal/perfect invisibility cloaking considered in \cite{PSS, Leon}, it is a singular 'blow-up-a-point' transformation. The cloaking media achieved in this way inevitably have singular materials parameters and require design of {\it metamaterials}. The singularity poses much challenge to both theoretical analysis and practical construction.
While several proof-of-concept prototypes have been proposed as cloaks, several challenges still remain in developing fully functional devices capable of fully cloaking objects. A lot of current academic and industrial research in material science is focused on development of such metamaterials from proof-of-concept prototypes to practical devices. See \cite{ ALU} for more details on this topic.

In order to avoid the singular structure, it is natural to introduce regularizations into the construction, and instead of the perfect cloak, one considers the approximate cloak or near-cloak. In order to handle the singular structure from the perfect cloaking constructions, the papers \cite{GKLU, GKLU10, Ruan} used this truncation of singularities methods to approach the nearly
cloaking theory, whereas other papers regularize the 'blow-up-a-point' transformation to 'blow-up-a-small-region' transformation. The small-inclusion-blowup method was studied in  \cite{Ammari, KSVW} for the conductivity model.

The papers \cite{GLU,GLU1} considered the case of electrostatics, which is optics at frequency zero. These papers provide counter examples to uniqueness in Calder\'on Problem, which is the inverse problem
for electrostatics which lies at the heart of Electrical Impedance Tomography [EIT]. EIT consists of determining the electrical conductivity of a medium filling
a region $\Omega$ by making voltage and current measurements at the boundary
$\partial \Omega$ and was first proposed in \cite{Calderon}. The fundamental mathematical idea behind cloaking is using the invariance of a coordinate transformation for specific systems, such as conductivity, acoustic, electromagnetic, and elasticity systems. We refer readers to the article \cite{Gunther} for a nice overview of development in EIT and cloaking for electrostatics. We also refer the readers to \cite{GLU2, GLU3, Hongyu, LZ, LHUG, GKLU, Lin} for the theory behind cloaking in various systems and related developments.

In this paper we will focus our attention for cloaking in electrostatics and  consider the following divergence type quasi-linear elliptic boundary value problem.
\begin{equation}\label{basic}
\begin{aligned}
-div( A(x,u) \nabla u )&= 0 \mbox{ in } \Omega  \\
u &= f \mbox{ on } \partial \Omega
\end{aligned}
\end{equation}
where $\Omega \subset \mathbb{R}^N$, $N \geq 2$,  is a bounded open set with smooth enough boundary and $A(x,t)$ is a non-negative symmetric matrix valued function in $\Omega \times \mathbb{R}$ which satisfies certain structure conditions. \eqref{basic} is a generalization of the conductivity equation considered in Electrical Impedance Tomography.

Let us now introduce the basic mathematical set up.
Consider $\Omega \subset \mathbb{R}^N$, $N \geq 2$, a bounded open set with Lipschitz boundary. Let $\mathcal{M}(\alpha, \beta,L;\Omega \times \mathbb{R})$ with
$0<\alpha<\beta < \infty$ and $L>0$ denote the set of all real $N\times N$ symmetric matrices $A(x,t)$ of functions defined almost
everywhere on $\Omega \times \mathbb{R}$ such that if $A(x,t)=[a_{kl}(x,t)]_{1\leq k,l\leq N}$ then 
\begin{enumerate}
 \item $a_{kl}(x,t)=a_{lk}(x,t)\ \forall l, k=1,..,N $
 \item $ (A(x,t)\xi,\xi)\geq \alpha|\xi|^2,\ |A(x,t)\xi|\leq \beta|\xi|,\ \ \forall\xi \in \mathbb{R}^N,\ \mbox{ a.e. }x\in\Omega \mbox{ and, }$
\item $|a_{kl}(x,t) - a_{kl}(x,s)| \leq L|t-s|\mbox{ for  a.e } x \in \Omega \mbox{ and any } t, s \mbox{ in } \mathbb{R}.$
\end{enumerate}

Under the above conditions, we show in Theorem \ref{appendix} that the following boundary value problem has a unique solution $u \in H^1(\Omega)$,
\begin{equation}\label{basiceqn}
\begin{aligned}
-div (A(x,u) \nabla u ) &= 0 \mbox{ on } \Omega\\
 u &= f \in H^{1/2}(\partial \Omega)
\end{aligned}
\end{equation}

The Dirichlet-to-Neumann map for the boundary value problem \eqref{basiceqn} is defined formally as 
the map $f: \to \Lambda_{A}f$
\begin{align*}
\Lambda_{A}f = \nu \cdot A(x,f)\nabla u|_{\partial \Omega}
\end{align*}
where $\nu$ is the outer unit normal to $\partial \Omega$. It is shown in Appendix A  that one can define the Dirichlet-to-Neumann (DN) map for the equation \eqref{basiceqn} in a weak sense as follows.
\begin{align*}
\Lambda_{A}: H^{\frac{1}{2}}(\partial \Omega) \to H^{-\frac{1}{2}}(\partial \Omega)
\end{align*}
 The inverse problem is to recover the quasi-linear co-efficient matrix $A(x,t)$, also called the conductivity from the knowledge of $\Lambda_{A}$. Before we provide the definition of cloaking, since the problem of cloaking is essentially that of non-uniqueness, we digress a bit and mention some previous work regarding uniqueness in the inverse problem for the equation considered in \eqref{basiceqn}.
 
 In the isotropic case, that is, $A(x,t) = a(x, t)I$ where $I$ denotes the identity matrix and $a$ is a positive $C^{2,\gamma}(\overline{\Omega} \times \mathbb{R})$ function having a uniform positive lower bound on $\overline{\Omega} \times [ - s, s]$ for each $s > 0$, the Dirichlet
to Neumann map $\Lambda_{a}$ determines uniquely the scalar coefficient $a(x,t)$ on $\overline{ \Omega }\times \mathbb{R}$. This uniqueness result was first proved for the linear case (i.e when $a$ is a function of $x$ alone) in the fundamental paper \cite{SU} for $N \geq 3$ for and in \cite{Nachman} for $N = 2$;  and in \cite{Sun} for the quasilinear case. Subsequent work on  unique identification of less regular $a$ in the isotropic linear case has been done in \cite{HabermanTataru,Haberman,CaroRogers} among others for dimensions $3$ and higher and in \cite{BrownUhlmann,Paivarinta} for dimension $2$. 

For the anisotropic/ matrix valued case, it is well known that one cannot recover the coefficient $A(x,t)$ itself because of the following invariance property for the DN map. Choose a smooth diffeomorphism  $\Phi:\Omega \to  \Omega$ such that 
$\Phi = Id$ on $\partial \Omega$ and define
\begin{align}\label{pushforward}
\Phi_{*}A(x,t) = \frac{D\Phi(x)^{T}A(x,t)D\Phi(x)}{|D\Phi|} \circ \Phi^{-1}(x)
\end{align}
We make the change of variables $y = \Phi(x)$ in \eqref{basiceqn} to get
\begin{equation*}
\int \limits_{\Omega} \,\sum \limits_{i,j=1}^{n} a_{ij}(x,u)\frac{\partial u}{\partial x^{i}}\frac{\partial u}{\partial x^{j}}\,dx = \int \limits_{\Omega} \sum \limits_{i,j=1}^{n}  a_{ij}(x,u) \frac{\partial u}{\partial y^{k}} \frac{\partial y^{k}}{\partial x^{i}} \frac{\partial u}{\partial y^{l}}\frac{\partial y^{l}}{\partial x^{j}} det\left(\frac{\partial x}{\partial y}\right)\,dy.
\end{equation*}
We can write this more compactly as 
\begin{equation*}
\int \limits_{\Omega} \left<A(x,u) \nabla_{x}u,\nabla_{x}u\right>\,dx = \int \limits_{\Omega} \left<\Phi_{*}A(y,u)\nabla_{y}u, \nabla_{y}u \right>\,dy.
\end{equation*}
where $\Phi_{*}A$ is as defined in \eqref{pushforward}.

Since $\Phi$ is identity at $\partial \Omega$, the change of variables does not affect the Dirichlet data and we obtain 
\begin{equation}\label{diff}
\Lambda_{A} = \Lambda_{\Phi_{*}A}
\end{equation}
Thus, for matrix valued co-efficients $A(x,t)$, one can expect uniqueness only modulo such a diffeomorphism. For dimension 2, in the linear case, such uniqueness up to difeomorphsim has been proved in \cite{Sunanisotropic,Astala} and for dimension 3 and higher in \cite{LeeUhlmann}. For the quasilinear case, Sun and Uhlmann in \cite{SunUhlmann} showed uniqueness up to diffeomorphism in dimension $2$ assuming $C^{2,\gamma}$, $0<\gamma<1$ smoothness of the coefficients $A$. They also proved uniqueness up to diffeomorphism for $N \geq 3$ for real analytic coefficients $A(x,t)$. Whether uniqueness up to diffeomorphism can be shown for less regular anisotropic quasi-linear coefficient $A(x,t)$ is an interesting question and remains open. 

Equations of the form \eqref{basic} are important and arise in many applications (eg, the stationary form of Richards equation \cite{BY}, the modeling of thermal conductivity of the Earth's crust \cite{WHN} or heat conduction in composite materials \cite{KL}). One of the goals of our work is to extend the result obtained in \cite{GLU,GLU1,KSVW} to the quasi-linear elliptic equation \eqref{basiceqn}. We propose a change of variable scheme, similar to the one in  \cite{GLU,GLU1}  and show how one can, in principle, obtain perfect cloaking, in the context of the equation considered in \eqref{basiceqn}, using singular change of variables and approximate cloaking using a regular change of variables. For approximate cloaking we use the small inclusion blow up method as in \cite{KSVW, Ammari}. The singularity and extreme anisotropy resulting from a singular change of variables pose a great challenge in manufacturing invisibility devices.  The construction of approximate cloaks using  regular change of variables is more tractable. However, these approximate cloaks, though non-singular are still anisotropic. The other major goal of our paper is to construct approximate { \it isotropic} cloaks. This will be accomplished using techniques of homogenization. First we construct approximate anisotropic cloaks using regular change of variables. Next, within the framework of homogenization, we approximate each approximate regular anisotropic cloak by a sequence of regular isotropic cloaks. Homogenization process for constructing isotropic regular approximate cloaks has been considered for a linear equation in \cite{GYLU}. We wish to extend the construction in \cite{GYLU} to the quasi-linear equation considered in \eqref{basic}. To the best of our knowledge, construction of approximate cloaks within the framework of homogenization for a quasi-linear elliptic equation has been done for the first time here. 
\subsection{Definition of Cloaking} 
We now provide a mathematical definition of cloaking for the quasi-linear elliptic partial differential equation considered in \eqref{basiceqn}. 
\begin{definition}\label{definitioncloaking}
Let $E \subset \Omega$ be fixed and let $\sigma_c: {\Omega \setminus E }\times \mathbb{R}$ be a non negative
matrix valued function defined on ${\Omega \setminus E} \times \mathbb{R}$. We say $\sigma_c$ cloaks $E$ if its any extension across $E$ of of following form
\[\sigma_A(x,t)= \begin{cases} 
      A(x,t) & (x,t) \in E \times \mathbb{R}\\
      \sigma_c(x,t) & (x,t) \in {\Omega \setminus E}\times \mathbb{R} \\
   \end{cases}
\]
produces the same Dirichlet-to-Neumann map as a uniform isotropic region irrespective of the choice of $A(x,t) \in \mathcal{M}(\alpha,\beta,L;\Omega \times \mathbb{R})$.
\end{definition}

That is, $\sigma_c$ cloaks $E$ in the sense of Definition 1 if $\Lambda_{\sigma_A} = \Lambda_{1}$ regardless of the choice of the extension $A(x,t)$. 

Suppose $\sigma_c(x,t)$ cloaks $E $ in the sense of Definition 1 and let $\Omega'$ be any domain containing $\Omega$. Then the Dirichlet-to-Neumann map for
\[\sigma_{\tilde A(x,t)}= \begin{cases} 
      A(x,t) & (x,t) \in E \times \mathbb{R}\\
      \sigma_c(x,t) & (x,t) \in \Omega \setminus E \times \mathbb{R} \\
      I & (x,t) \in \Omega'\setminus \Omega \times \mathbb{R}
   \end{cases}
\]
is independent of $A$ and is equal to $\Lambda_{I}$. This extension argument produces many other examples. Indeed, if $\sigma_c$ cloaks $E$ in the sense of Definition 1, then the extension of $\sigma_{c}$ by $I$ outside $\Omega$ cloaks $E$ in any larger domain $\Omega'$ containing $\Omega$. If cloaking is possible,  measurements made on the boundary i.e the knowledge of Dirichlet-to-Neumann map is not enough to detect the presence of an arbitrary inclusion inside the cloaked region.  Since $A(x,t)$ can be arbitrary, an equality of the form \eqref{diff} cannot possibly hold and thus cloaking is essentially a non-uniqueness result. 

This paper is organized as follows. In section 2, we introduce a regular change of variables scheme which will give us the desired approximate cloaking. In section 3, we introduce a singular change of variables and show how perfect cloaking can be achieved. The analysis in this section is essentially a simple extension of the arguments in \cite{GLU,GLU1,KSVW}. Section 4 is devoted to using homogenization techniques for constructing regular isotropic cloaks. We begin this section by recalling the basic notions of H-convergence in the linear case. Following that we perform the periodic homogenization in the quasi-linear settings. This enables us to construct regular isotropic cloak in the sense made precise in Section 4. In Appendix A, we prove existence and uniqueness for the boundary value problem \eqref{basiceqn} and show that is possible to define, in a weak sense, the DN map associated with \eqref{basiceqn}. Moreover, we also state a result on higher regularity for the solutions to \eqref{basic} which will be used in Section 4. Henceforth, we consider the physical dimensions $N=2,3$.

\section{Regular Change of Variables}
In this section, we apply a regular change of variables and nearly cloak $E$ in the sense made precise below.
For simplicity, we let $\Omega = B_2$ and restrict our attention the case when $B_1 =E$ needs to be nearly cloaked.  We extend the result to non-radial domains later.

The basic premise is as follows. Consider a small ball of radius $r$, $B_r$ centered at $0$ where $r <1$. Construct a map $F^r(x):  B_2 \to  B_2$ with the following properties. 
\begin{enumerate}
\item[1)] $F^r$ is continuous and piecewise smooth. 
\item[2)] $F^r$ expands $B_r$ to $B_1$ and maps $B_2$ to itself.
\item [3)] $F^r = x$ on $\partial B_2$.
\end{enumerate}
It is easy to see that the following candidate for $F^r$ satisfies the above properties.
\begin{equation*}F^r(x)= \begin{cases} 
      \frac{x}{r} & |x| \leq r \\
      (\frac{2-2r}{2-r} + \frac{1}{2-r}|x|)\frac{x}{|x|}& r \leq |x| \leq 2   \end{cases}
\end{equation*}
Note that $F^r$ is continuous, piecewise smooth and  non-singular and $(F^r)^{-1}$ is also continuous and piecewise smooth. Consider
\begin{equation}\label{sigmaar}
\sigma_A^{r}(x,t)= \begin{cases} 
      A(x,t) & (x,t) \in B_{1} \times \mathbb{R}\\
      F^{r}_{*}1 & (x,t) \in B_2 \setminus B_1 \times \mathbb{R} \\
   \end{cases}
\end{equation}
where $A(x,t) \in \mathcal{M}(\alpha,\beta,L;B_2 \times \mathbb{R})$. By nearly cloaking, we mean that the following must hold
\begin{equation}\label{nearlycloaking}
|\langle\Lambda_{\sigma_A^{r}}f,g \rangle - \langle \Lambda_{1}f, g \rangle |= \mathrm{o}(1) ||f||_{H^{\frac{1}{2}}(\partial B_2)}||g||_{H^{\frac{1}{2}}(\partial B_2)} \mbox{ for any }f,g \in H^{\frac{1}{2}}(\partial B_2)
\end{equation}
where the $\mathrm{o}(1)$ term is independent of $f$ and $g$. 

\eqref{nearlycloaking} is equivalent to
\begin{align}\label{new}
||\Lambda_{\sigma_A^{r}} - \Lambda_1||_{H^{\frac{1}{2}}(\partial B_2) \times H^{-\frac{1}{2}}(\partial B_2)} &= \sup_{f, g \in H^{\frac{1}{2}}(\partial B_2) }|\langle\Lambda_{\sigma_A^{r}}f,g \rangle - \langle \Lambda_{1}f, g \rangle |\notag \\&=\mathrm{o}(1) ||f||_{H^{\frac{1}{2}}(\partial B_2)}||g||_{H^{\frac{1}{2}}(\partial B_2)} 
\end{align} 
By \eqref{diff}, the DN map for $\sigma^r_A$ is identical to that of $(F^r)^{-1}_{*} \sigma^r_A$.
We will show 
\[|\langle \Lambda_{(F^{r})^{-1}_{*} \sigma_A^r}f, g \rangle  - \langle \Lambda_{1}f, g \rangle | =\mathrm{o}(1) ||f||_{H^{\frac{1}{2}}(\partial B_2)}||g||_{H^{\frac{1}{2}}(\partial B_2)} \mbox{ for any }f,g \in H^{\frac{1}{2}}(\partial B_2)
\] where the $\mathrm{o}(1)$ term is independent of $f$ and $g$ and where
 \begin{equation*}(F^{r})^{-1}_{*} \sigma_A^r = \begin{cases} 
      (F^{r})^{-1}_{*} A =\widetilde{A}^r(x,t)& (x,t) \in B_{r} \times \mathbb{R}\\
     1 &(x,t)\in (B_2 \setminus B_r) \times \mathbb{R} 
   \end{cases}
\end{equation*}

Let us now explicitly calculate $\widetilde{A}^r(x,t)$. We note that for $\Phi(x) = (F^r)^{-1}(x)$, $D \Phi(x) = r I$ for $x \in B_1$. This implies that $|D\Phi| = r^N$. 
From \eqref{pushforward}, it follows that\\
$\widetilde A^r(x,t) =(F^{r})^{-1}_{*} A= \frac{1}{r^{N-2}} A(\frac{x}{r},t)$ for $x \in B_r$. 
Hence
if $A \in \mathcal{M}(\alpha,\beta,L;B_1 \times \mathbb{R})$ then\\
 $\widetilde A^r(x,t) \in \mathcal{M}(\frac{\alpha}{r^{N-2}},\frac{\beta}{ r^{N-2}}, L;B_r \times \mathbb{R})$. This ultimately implies that \\for $r\ll 1$, ${F^{r}_{*}}^{-1} \sigma_A^r \in \mathcal{M}(1,\frac{\beta}{ r^{N-2}}, L;B_r \times \mathbb{R})$.

Let us now fix $f \in H^{1/2}(\partial B_2)$. Let $u^{r,f}\in H^1(B_2)$ uniquely solve
\begin{equation}\label{urho}
\begin{aligned}
-div ((F^{r})^{-1}_{*}\sigma_{A}(x,u^{r,f})\nabla u^{r,f}) &= 0 \mbox{ in } B_2  \\
u^{r,f} &= f \mbox{ on } \partial B_2
\end{aligned}
\end{equation}
(Unique solution to \eqref{urho} is indeed guaranteed by the  existence and uniqueness result proved in Theorem \ref{appendix}).

Let $v^{f}\in H^1(B_2)$ solve 
\begin{equation}\label{vf}
\begin{aligned}
-\Delta v^{f} &= 0 \mbox{ in } B_2  \\
v^{f} &= f \mbox{ on } \partial B_2
\end{aligned}
\end{equation}
Note that $u^{r,f} - v^{f} \in H^1_{0}(B_2)$. Using coercivity for $(F^{r})^{-1}_{*}\sigma_{A}$ gives us 
\begin{equation}\label{sb5}
||\nabla(u^{r,f} - v^{f})||^2_{L^{2}(B_2)} \leq  | \int_{B_2} (F^{r})^{-1}_{*} \sigma_A(x,u^{r,f}) \nabla(u^{r,f} - v^f) \cdot \nabla(u^{r,f} - v^f)\ dx|.
\end{equation}
Now
\begin{align}\label{computation}
\mbox{ r.h.s. of \eqref{sb5} }
&=| \int_{B_2} (F^{r})^{-1}_{*} \sigma_A(x,u^{r,f}) \nabla(u^{r,f}) \cdot \nabla(u^{r,f} - v^f)dx\notag\\
&\qquad-\int_{B_2} (F^{r})^{-1}_{*}\sigma_A(x,u^{r,f}) \nabla(v^{f}) \cdot \nabla(u^{r,f} - v^f)\, dx| \notag \\
&=|-\int_{B_2}(F^{r})^{-1}_{*}\sigma_A(x,u^{r,f}) \nabla v^{f} \cdot \nabla(u^{r,f} - v^f)\,dx|\;(\mbox{as }u^{r,f} \mbox{ solves \eqref{urho}}) \notag \\
&=|-\int_{B_2}((F^{r})^{-1}_{*}\sigma_A(x,u^{r,f}) -I) \nabla v^{f} \cdot \nabla(u^{r,f} - v^f)\,dx| \mbox{ (as } v^{f} \mbox{ solves \eqref{vf}) } \notag \\
&=|\int_{B_r}  ((F^{r})^{-1}_{*}\sigma_A(x,u^{r,f}) -I) \nabla v^{f} \cdot \nabla(u^{r,f} - v^f)\,dx |) \notag \\
&=|\int_{B_r} (\widetilde{A}^r(x,u^{r,f}) -I) \nabla v^{f} \cdot \nabla(u^{r,f} - v^f) \,dx|
\end{align} 
We note that $||\widetilde{A}^r(x,t)||_{L^{\infty}(B_r)} \leq \frac{C}{r^{N-2}}$ where the constant $C$ is independent of $r$.  We apply H\"older's inequality to the last line in \eqref{computation} to obtain
\[||\nabla(u^{r,f} - v^{f})||^2_{L^{2}(B_2)} \leq C r^{\frac{N}{p_1} - N +2} ||\nabla v^f||_{L^{p_2}(B_{r})} ||\nabla (u^{r,f} - v^f)||_{L^{2}(B_r)}
\]
where $C$ is independent of $r$ and $\frac{1}{p_1} + \frac{1}{p_2}=\frac{1}{2}$. We thus have
\begin{align*}
||\nabla(u^{r,f} - v^{f})||^2_{L^{2}(B_2)} &\leq C r^{\frac{N}{p_1} - N +2} ||\nabla v^f||_{L^{p_2}(B_{r})} ||\nabla (u^{r,f} - v^f)||_{L^{2}(B_r)} \notag \\
& \leq C  r^{\frac{N}{p_1} - N +2} ||\nabla v^f||_{L^{p_2}(B_{r})} ||\nabla (u^{r,f} - v^f)||_{L^{2}(B_2)}
\end{align*}
By Poincare's inequality we can say that
\begin{equation}\label{xkcd}
||u^{r,f} - v^f||_{H^1(B_2)} \leq C r^{\frac{N}{p_1} - N +2} ||\nabla v^f||_{L^{p_2}(B_r)}
\end{equation}
By Corollary 6.3 in \cite{GT},  we can say that
\begin{equation}\label{GT1}
||\nabla v^f||_{L^{\infty}(B_r)} \leq  C||v^f||_{L^{\infty}(B_1)}
\end{equation}
where $C$ is independent of $r$ and $f$.

We now use  \cite[Theorem 8.24]{GT} to conclude that
\begin{equation}\label{GT2}
||v^f||_{L^{\infty}(B_1)} \leq C ||v^f||_{L^2(B_2)}
\end{equation}
where $C$ is independent of $r$ and $f$.  \eqref{GT1} and \eqref{GT2} together imply
\begin{align}\label{GT3}
||\nabla v^f||_{L^{p_2}(B_r)} &\leq C \,r^{\frac{N}{p_2}}||v^f||_{L^2(B_2)} \leq C\, r^{\frac{N}{p_2}}||f||_{H^{\frac{1}{2}}(\partial B_2)}
\end{align}
\eqref{xkcd} and \eqref{GT3} hence give us 
\begin{align}\label{key}
||u^{r,f} - v^f||_{H^1(B_2)} &\leq C r^{\frac{N}{p_1} + \frac{N}{p_2}-N+2} ||f||_{H^{\frac{1}{2}}(\partial B_2)} \notag \\
&= C r^{-\frac{N}{2}+2} ||f||_{H^{\frac{1}{2}}(\partial B_2)}
\end{align}
where $C$ is independent of $r$ and $f$ also.

Let $g \in H^{\frac{1}{2}}(\partial B_2)$ be arbitrary. We know that there exists a  unique $v^g\in H^1(B_2\setminus B_1)$ which solves the following boundary value problem
	\begin{equation}\label{newstuff}
	\begin{aligned}
	-\Delta v^g &= 0 \mbox{ in }B_2 \setminus B_1 \\
	v^g&=0 \mbox{ on } \partial B_1 \\
	v^g&=g \mbox{ on } \partial B_2
	\end{aligned}
	\end{equation}
such that 
\begin{equation}\label{key1}
||v^g||_{H^{1}(B_2 \setminus B_1)} \leq C ||g||_{H^{\frac{1}{2}}(\partial B_2)}
\end{equation}
	where $C$ is independent of $g$. 
		
From \eqref{urho}, \eqref{vf}, \eqref{newstuff}, \eqref{key}, \eqref{key1} and the definition of $(F^{r})^{-1}_{*} \sigma_A^r$	, we see that 
\begin{align}\label{yes1}
|\langle \Lambda_{(F^{r})^{-1}_{*} \sigma_A^r}f, g \rangle  - \langle \Lambda_{1}f, g \rangle| &=|\int_{\partial B_2} \frac{\partial u^{r,f}}{\partial \nu}v^g\,dS - \int_{\partial B_2} \frac{\partial v^{f}}{\partial \nu}v^g\,dS|\notag \\
 &=|\int_{B_2 \setminus B_1} (\nabla u^{r,f} \cdot \nabla v^{g}  - \nabla v^f \cdot \nabla v^g)\,dx| \notag \\
&\leq \int_{B_2 \setminus B_1} ||\nabla u^{r,f} - \nabla v^f||_{L^{2}(B_2 \setminus B_1)} ||v^g||_{H^1(B_2 \setminus B_1)} \notag \\
&\leq C r^{-\frac{N}{2}+2} ||f||_{H^{\frac{1}{2}}(\partial B_2)}||g||_{H^{\frac{1}{2}}(\partial B_2)} 
\end{align}
where $C$ is independent of $r$, $f$ and $g$.
	
\eqref{yes1} implies that
for $N=2,3$, we obtain \eqref{new}. 

\subsection{Faster decay}

In this subsection we derive an improved rate of convergence in \eqref{yes1} at the cost of choosing smoother boundary data. Fix $f \in H^{\frac{3}{2}}(\partial B_2)$.

Since $u^{r,f}$ solves \eqref{urho} and $v^f$ solves \eqref{vf}, we get, for any $w \in H^1_{0}(B_2)$
\begin{align}\label{rateincrease1}
\int_{B_{2}} \nabla(u^{r,f} - v^f) \cdot \nabla w\,dx = \int_{B_r} (I - \widetilde A^r(x,u^{r,f})) \nabla u^{r,f} \cdot \nabla w\,dx
\end{align}
Choose $w$ which uniquely solves 
\begin{equation}\label{rateincrease2}
\begin{aligned}
-\Delta w &= u^{r,f} - v^f \mbox{ in } B_2  \\
w &= 0 \mbox{ on } \partial B_2
\end{aligned}
\end{equation}
Since $u^{r,f} - v^f \in L^2(B_2)$, $w \in H^2(B_2)$ with
\begin{equation}\label{h2r}
||w||_{H^2(B_2)}\leq C||u^{r,f}-v^f||_{L^2(B_2)}.
\end{equation}
where $C$ is independent of $r$. 

By Sobolev embedding, this implies $\nabla w \in L^q(B_2)$ for any $1 \leq q<\infty$ for $N=2$ and $\nabla w \in L^q(B_2)$ for any $1 \leq q \leq \frac{2N}{N-2}$ for $N = 3$.
\begin{align*}
\int_{B_2} (u^{r,f} - v^f)^2\,dx &= \int_{B_r} (I - \widetilde {A}^r(x,u^{r,f})) \nabla u^{r,f}\cdot  \nabla w\,dx\notag\\
& \leq \frac{C}{r^{N-2}} ||\nabla u^{r,f}||_{L^{p^\prime}(B_r)} ||\nabla w ||_{L^{p}(B_r)}\quad\mbox{where $\frac{1}{p}+\frac{1}{p^\prime}=1,\ $} p>2\notag\\
 &\leq Cr^{\frac{N}{q^{\prime}} - N+2} ||\nabla u^{r,f}||_{L^{p^\prime}(B_r)}  ||\nabla w||_{L^q(B_2)}\quad \mbox{where $\frac{1}{q} + \frac{1}{q^{\prime}} = \frac{1}{p}$} \end{align*}
 Note that we have the following conditions
 \begin{equation}\label{exponents}
 \begin{aligned}
 &2<p<q<\infty \quad \mbox{ for  } N= 2 \\
 &2<p<q\leq \frac{2N}{N-2} \mbox{ for } N =3
 \end{aligned}
 \end{equation}
Then by using \eqref{h2r} we have 
\begin{align*}
||u^{r,f} -v^f||_{L^2(B_2)}&\leq Cr^{\frac{N}{q^{\prime}} - N+2} ||\nabla u^{r,f}||_{L^{p^\prime}(B_r)} \notag \\
&\leq Cr^{\frac{N}{q^{\prime}} - N+2} \left[||\nabla(u^{r,f}- v^f)||_{L^{p^\prime}(B_r)} + ||\nabla v^f||_{L^{p^\prime}(B_r)} \right] \notag \\
&\leq Cr^{\frac{N}{q^{\prime}} - N+2} \left[||\nabla(u^{r,f}- v^f)||_{L^{2}(B_r)} || \, \mathbbm{1}_{B_r}||_{L^{s}(B_r)} + ||\nabla v^f||_{L^{\infty}(B_r)} || \, \mathbbm{1}||_{L^{p^\prime}(B_r)}  \right]
\end{align*}
where, $\frac{1}{p^\prime}=\frac{1}{2}+\frac{1}{s}$. Now by using
\eqref{GT1}, \eqref{GT2} and \eqref{key} together give us 
\[
||u^{r,f} -v^f||_{L^2(B_2)} \leq Cr^{\frac{N}{q^{\prime}} - N+2} \left(r^{\frac{-N}{2} + 2 +\frac{N}{s}}  ||f||_{H^{\frac{1}{2}}(\partial B_2)} + r^{\frac{N}{p^\prime}} ||f||_{H^{\frac{1}{2}}(\partial B_2)}\right)
\]
or,
\begin{align}\label{fasterdecay}
||u^{r,f} - v^f||_{L^2(B_2)} &\leq C r^{N(1-\frac{1}{p})} ||f||_{H^{\frac{1}{2}}(\partial B_2)} \notag \\
&= \mathcal{O}(r^{-N - \frac{N}{q} +4})
\end{align}
\eqref{fasterdecay} implies that for $N=3$, we have a decay rate of $\mathcal{O}(r^{\frac{q-3}{q}})$. From \eqref{exponents}, we can optimize this decay rate to $\mathcal{O}(r^{\frac{6-3}{6}}) = \mathcal{O}(r^\frac{1}{2})$. This is of the same order as in \eqref{key} for $N=3$. 
 
However, from \eqref{fasterdecay} and \eqref{exponents}, we can conclude that For $N=2$, the best possible decay is  $\mathcal{O}(r^{2 - \frac{2}{q}})$ for any $q>2$. This decay rate is faster than the one we obtained in \eqref{key}.

Now, by \cite[Theorem 9.13]{GT}, since $u^{r,f} - v^f$ is harmonic in $B_2 \setminus \overline{ B_r}$ and $u^{r,f} - v^f = 0$ on $\partial B_2$ we have 
\begin{equation}\label{H2estimate}
||u^{r,f} - v^f||_{H^{2}(\Omega^\prime)} \leq C ||u^{r,f} - v^{f}||_{L^{2}(B_2 \setminus \overline{ B_r)}}
\end{equation}
where $\Omega^\prime = B_2 \setminus \overline{B_{r + \delta}}$ where $\delta >0$ is small enough. \eqref{fasterdecay} and \eqref{H2estimate} along with the trace theorem for Sobolev spaces imply
\begin{align}\label{218}
||\frac{\partial u^{r,f}}{\partial \nu} - \frac{\partial v^{f}}{\partial \nu}||_{H^{1/2}(\partial B_2)} =\mathrm{o}(1) ||f||_{H^{3/2}(\partial B_2)}.
\end{align}

The decay estimates for nearly cloaking here are weaker than the one in \cite{KSVW} where a decay of $\mathcal{O}(r^N)$ is obtained. In our case, though we have a slower decay rate, it is sufficient to show approximate cloaking. 

So far, we focused on the radial setting because of its explicit character. A similar argument as to the one provided in this section in fact proves
\begin{corollary}
	Let $H: B_2 \to \Omega$ be a Lipschitz continuous map with Lipschitz continuous inverse and let $E = H(B_1)$. Then $G^r = H \circ F^r \circ H^{-1}: \Omega \to \Omega$ is piecewise smooth with the following properties.
	\begin{enumerate}
		\item
		$G^r$ expands $B_r$ to $E$,
		\item
		$G^r(x) = x$ on $\partial \Omega$.
		\end{enumerate}
	Let
\begin{align}\label{sigmaar1}
\sigma_A^{r}(x,t)= \begin{cases} 
A(x,t) & (x,t) \in E \times \mathbb{R}\\
G^{r}_{*}1 & (x,t) \in \Omega \setminus E \times \mathbb{R} \\
\end{cases}
\end{align}
where $A(x,t) \in \mathcal{M}(\alpha,\beta,L;\
\Omega \times \mathbb{R})$. Then the following holds
\begin{equation}\label{nearlycloaking1}
|\langle\Lambda_{\sigma_A^{r}}f,g \rangle - \langle \Lambda_{1}f, g \rangle |= \mathrm{o}(1) ||f||_{H^{\frac{1}{2}}(\partial \Omega)}||g||_{H^{\frac{1}{2}}(\Omega)} \mbox{ for any }f,g \in H^{\frac{1}{2}}(\partial \Omega)
\end{equation}
In other words, $ G^{r}_{*}1$ {\it approximately/nearly} cloaks $E$.
	\end{corollary}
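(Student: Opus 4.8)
The plan is to replay the argument of Section~2 verbatim, with the triple $(B_2,B_1,F^r)$ replaced by $(\Omega,E,G^r)$, after first reducing, via the invariance identity \eqref{diff}, to a situation in which the conductivity equals the identity outside a small set and the comparison operator is the flat Laplacian on $\Omega$. First I would note that for each fixed $r<1$ the map $G^r=H\circ F^r\circ H^{-1}$ is bi-Lipschitz (a composition of bi-Lipschitz maps; $F^r$ is bi-Lipschitz with constants depending on $r$) and equals the identity on $\partial\Omega$. Since the derivation of \eqref{diff} uses only the change-of-variables formula in the Dirichlet integral, which is valid for bi-Lipschitz homeomorphisms, one gets $\Lambda_{\sigma_A^r}=\Lambda_{(G^r)^{-1}_{*}\sigma_A^r}$ on $\partial\Omega$. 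Because $(G^r)^{-1}=H\circ (F^r)^{-1}\circ H^{-1}$ sends $E$ onto $W_r:=H(B_r)$, the composition rule $(\Phi\circ\Psi)_{*}=\Phi_{*}\circ\Psi_{*}$ gives
\[
(G^r)^{-1}_{*}\sigma_A^r(x,t)=\begin{cases}\widehat A^r(x,t):=(G^r)^{-1}_{*}A & (x,t)\in W_r\times\mathbb{R},\\[2pt] I & (x,t)\in (\Omega\setminus W_r)\times\mathbb{R}.\end{cases}
\]
Here $W_r$ satisfies $|W_r|\le C_H r^N$ and $\operatorname{diam}W_r\le C_H r$, so for $r$ small $W_r$ lies in a fixed ball $\Omega_0\Subset\Omega$ shrinking to $H(0)$.

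The key point, and the only place the bi-Lipschitz hypothesis on $H$ is consumed, is the Jacobian scaling of $\widehat A^r$. Since $D(F^r)^{-1}=rI$ on $B_1$, the chain rule gives $|D(G^r)^{-1}|\le C_H\,r$ and $c_H r^N\le |\det D(G^r)^{-1}|\le C_H r^N$ a.e. on $W_r$, using that $DH$, $DH^{-1}$ and $\det DH$ are bounded above and below a.e. From \eqref{pushforward} it then follows that $\widehat A^r\in\mathcal M\big(c\alpha,\;C\beta\,r^{2-N},\;CL\,r^{2-N};\,W_r\times\mathbb R\big)$; in particular, since $N=2,3$ forces $2-N\le 0$, the coefficient $(G^r)^{-1}_{*}\sigma_A^r$ is coercive with constant bounded below \emph{independently} of $r<1$, and Theorem~\ref{appendix} applies. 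Also $\|\widehat A^r\|_{L^\infty(W_r)}\le C r^{2-N}$, exactly the bound used for $\widetilde A^r$ in Section~2.

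With these two observations the rest is a transcription of Section~2. Let $u^{r,f}\in H^1(\Omega)$ solve the quasilinear problem with coefficient $(G^r)^{-1}_{*}\sigma_A^r$ and data $f$ on $\partial\Omega$, and let $v^f$ be harmonic in $\Omega$ with data $f$. Since the coefficient is $I$ on $\Omega\setminus W_r$ and $v^f$ is harmonic, the coercivity identity localizes $\|\nabla(u^{r,f}-v^f)\|_{L^2(\Omega)}^2$ to an integral over $W_r$ of $(\widehat A^r(x,u^{r,f})-I)\nabla v^f\cdot\nabla(u^{r,f}-v^f)$; Hölder's inequality together with $\|\widehat A^r\|_{L^\infty(W_r)}\le Cr^{2-N}$, $|W_r|\le Cr^N$, and the interior estimates $\|\nabla v^f\|_{L^\infty(W_r)}\le C\|v^f\|_{L^\infty(\Omega_0)}\le C\|v^f\|_{L^2(\Omega)}\le C\|f\|_{H^{1/2}(\partial\Omega)}$ (Corollary~6.3 and Theorem~8.24 of \cite{GT}, applied on the fixed $\Omega_0$ just as for \eqref{GT1}--\eqref{GT3}) yield $\|u^{r,f}-v^f\|_{H^1(\Omega)}\le C r^{\,2-N/2}\|f\|_{H^{1/2}(\partial\Omega)}$. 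For $g\in H^{1/2}(\partial\Omega)$, let $v^g$ solve $-\Delta v^g=0$ in $\Omega\setminus\overline{\Omega_0}$ with $v^g=0$ on $\partial\Omega_0$ and $v^g=g$ on $\partial\Omega$, with $\|v^g\|_{H^1(\Omega\setminus\Omega_0)}\le C\|g\|_{H^{1/2}(\partial\Omega)}$. For $r$ small both $u^{r,f}$ and $v^f$ are harmonic in $\Omega\setminus\overline{\Omega_0}$, so integrating by parts and using $v^g|_{\partial\Omega_0}=0$ gives
\[
|\langle\Lambda_{(G^r)^{-1}_{*}\sigma_A^r}f,g\rangle-\langle\Lambda_1 f,g\rangle|\le \|\nabla(u^{r,f}-v^f)\|_{L^2(\Omega\setminus\Omega_0)}\,\|\nabla v^g\|_{L^2(\Omega\setminus\Omega_0)}\le C r^{\,2-N/2}\|f\|_{H^{1/2}(\partial\Omega)}\|g\|_{H^{1/2}(\partial\Omega)},
\]
which is $\mathrm o(1)\|f\|_{H^{1/2}(\partial\Omega)}\|g\|_{H^{1/2}(\partial\Omega)}$ for $N=2,3$ since $2-N/2>0$. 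Combining with the first step proves \eqref{nearlycloaking1}.

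I expect the main obstacle to be precisely the two points above: (i) justifying that the conjugation identity \eqref{diff} survives for the merely bi-Lipschitz map $G^r$, so that one may legitimately transport the problem onto $\Omega$ with the flat Laplacian outside the tiny set $W_r$; and (ii) the Jacobian scaling estimate $\|\widehat A^r\|_{L^\infty(W_r)}\le Cr^{2-N}$ together with uniform coercivity, which is what makes the harmonic-function interior estimates of Section~2 reusable without change. Everything else — the energy identity, Hölder, Poincaré, and the test-function argument with $v^g$ — is a routine repetition of the radial computation.
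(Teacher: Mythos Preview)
Your proposal is correct and is precisely the ``similar argument'' the paper alludes to but does not write out: you pull back by $(G^r)^{-1}$ via the invariance identity \eqref{diff}, obtain the identity coefficient outside the small set $W_r=H(B_r)$, and then rerun the energy comparison of Section~2 against the harmonic function $v^f$ on $\Omega$, using interior estimates on a fixed compact $\Omega_0\Subset\Omega$ containing all $W_r$. The two technical points you single out---validity of \eqref{diff} for merely bi-Lipschitz maps and the Jacobian scaling $\|\widehat A^r\|_{L^\infty(W_r)}\le Cr^{2-N}$ with $r$-independent coercivity---are exactly the places where the general case differs from the radial one, and your treatment of them is sound.
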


\section{Perfect cloaking}
We now show how $E \subset \Omega$ can be perfectly cloaked using a singular change of variables. For simplicity we first take $E=B_1$ and $\Omega=B_2$. The analysis in this section mirrors that in Section 4 of \cite{KSVW}. 

Let us define 
\begin{equation}\label{definitionoff}
F(x) = \left(1 + \frac{|x|}{2} \right)\frac{x}{|x|}
\end{equation}
$F$ has the following properties 
\begin{enumerate}
\item[1)] $F$ is smooth except at $x=0$.
\item[2)] $F$ expands $0$ to $B_1$ and maps $B_2$ to itself.
\item [3)] $F = x$ on $\partial B_2$.
\end{enumerate}
Our candidate for perfect cloaking will be $F_{*}1$. We first calculate $F_{*}1$ explicitly.
Note that 
\begin{equation}\label{DF}
DF = \left ( \frac{1}{2} + \frac{1}{|x|} \right )I - \frac{1}{|x|} \hat x \hat x^{T}
\end{equation}
for $|x| >0$, where $I$ is the identity matrix and $\hat x = \frac{x}{|x|}$ and  $DF$ is a symmetric matrix such that
\begin{enumerate}
\item[a)] $\hat x$ is an eigenvector with eigen-value $1/2$
\item[b)] $\hat x^{\bot}$ is a (N-1) dimensional eigen-space with eigen-value $\frac{1}{2} + \frac{1}{|x|}$
\end{enumerate}
The determinant is thus 
\begin{equation}\label{det}
det(DF) = \frac{1}{2}\left(\frac{1}{2} + \frac{1}{|x|} \right )^{N-1}
\end{equation}
Hence, whenever $1 < |y| <2$, we have 
\begin{equation}\label{evalues}
F_{*}1(y) = \frac{2^N}{(2 + |x|)^{N-1}}\left[\left(\frac{|x|^{N-1}}{4} + |x|^{N-2} + |x|^{N-3} \right)(I - \hat x \hat x ^{T})  + \frac{1}{4}|x|^{N-1} \hat x \hat x^{T}\right]
\end{equation}
where the right hand side is evaluated at $x = F^{-1}(y) = 2(|y| -1)\frac{y}{|y|}$

As $|y| \to 1$, $F_{*}1$ becomes singular. In fact, following the same arguments as in \cite{KSVW}, we get
\begin{enumerate}
\item[*]
When $N=2$, one eigenvalue of $F_{*}1 \to 0$ and the other tends to $\infty$ as $|y| \to 1$
\item[*]
For $N=3$, one eigenvalue goes to $0$ while others remain finite as $|y| \to 1$.
\end{enumerate}
Consider $\sigma_A$ defined as 
\begin{equation}\label{defnsigma}
\sigma_A(y,t)= \begin{cases} 
      A(y,t) & (y,t) \in B_{1} \times \mathbb{R}\\
      F_{*}1 & (y,t) \in B_2 \setminus B_1 \times \mathbb{R} \\
   \end{cases}
\end{equation}
where $A(y,t)\in \mathcal{M}(\alpha,\beta,L;B_1 \times \mathbb{R})$.

Define $v$ by
\begin{equation}\label{defnv}
v(y)=
\begin{cases}
u(0) & y \in B_1  \\
u(x) & y \in B_2 \setminus B_1 
\end{cases}
\end{equation}
where $u \in H^1(B_2) $ is the harmonic function in $B_2$ with boundary data $f \in H^{1/2}(\partial B_2)$ and $x = F^{-1}(y)$.

We will show that $v$ solves 
\begin{equation}\label{cloakingeqn}
\begin{aligned}
-div_y( \sigma_A(y,v(y)) \nabla_{y} v(y) )&= 0 \mbox{ in } B_2  \\
 v&=f \text{ on } \partial B_2
\end{aligned}
\end{equation}
Since $F_{*}1$ is degenerate near $|y|=1$, it is not immediately clear if \eqref{cloakingeqn} has a  weak solution. 

We need to put some constraints to get a unique weak solution for \eqref{cloakingeqn} since $\sigma_A(y,t)$  is not uniformly elliptic.  As $F_{*}1$ is smooth for $|y|>1$, by elliptic regularity $v$ will be uniformly bounded in any compact subset of $B_2 \setminus \overline{B_1}$. 
 Since $F_{*}1$ becomes degenerate near $|y|=1$, we ask that any solution $v(y)$ not diverge as $|y| \to 1$. That is, we ask that
\begin{equation}\label{boundedness}
|v(y)| \leq C \mbox{ for } |y| \leq \rho
\end{equation}
for some finite $C$ and $1 < \rho < 2$. 

We first prove a lemma which identifies the value of any solution $v$ to \eqref{cloakingeqn} on $B_2 \setminus \overline{B_1}$ .
\begin{lemma}\label{lemma3.1}
Any solution $v$ to \eqref{cloakingeqn} satisfying \eqref{boundedness} is such that 
\begin{equation*}
v(y) = u(x)\mbox{ for } 1<|y|<2
\end{equation*}
where $x = F^{-1}(y)$ and $u$ us the harmonic function on $B_2$ with the same Dirichlet data as $v$. 
\end{lemma}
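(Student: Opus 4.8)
The plan is a transformation-optics argument: transport the equation satisfied by $v$ on the annulus $B_2\setminus\overline{B_1}$ back to the punctured ball $B_2\setminus\{0\}$ through the singular map $F$ of \eqref{definitionoff}, where by construction of $F_{*}1$ it collapses to $\Delta w=0$, and then use the boundedness hypothesis \eqref{boundedness} to remove the singularity at the origin and identify the pulled-back function with $u$.

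First I would note that, since the coefficient $F_{*}1$ in \eqref{defnsigma} is independent of the second variable, the restriction of \eqref{cloakingeqn} to the open annulus $\{1<|y|<2\}$ is a \emph{linear} divergence-form equation; in weak form, $\int F_{*}1\,\nabla_y v\cdot\nabla_y\varphi\,dy=0$ for all $\varphi\in C_c^\infty(\{1<|y|<2\})$. On every compact subset of this open annulus the matrix \eqref{evalues} is smooth and uniformly elliptic, so $v\in H^1_{\mathrm{loc}}(\{1<|y|<2\})$ and is in fact smooth there. Now set $w:=v\circ F$ on $B_2\setminus\{0\}$. Since $F$ restricts to a smooth diffeomorphism of $B_2\setminus\{0\}$ onto $B_2\setminus\overline{B_1}$ with $F=\mathrm{id}$ on $\partial B_2$, the change-of-variables identity underlying \eqref{pushforward}, applied to the identity matrix (whose $F$-pushforward is exactly the coefficient felt by $v$), shows that $w\in H^1_{\mathrm{loc}}(B_2\setminus\{0\})$ is a weak---hence, by Weyl's lemma, classical---solution of $\Delta w=0$ in $B_2\setminus\{0\}$ with $w=f$ on $\partial B_2$.

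Next I would remove the origin. Because $F(x)\to\partial B_1$ as $x\to 0$, the bound \eqref{boundedness} gives $|w(x)|=|v(F(x))|\le C$ for $|x|$ small; a bounded harmonic function on a punctured ball in dimension $N=2,3$ extends harmonically across the missing point (which has vanishing capacity), so $w$ extends to a harmonic function $\bar w$ on all of $B_2$. This $\bar w$ is smooth on $\overline{B_{3/2}}$ and agrees with $v\circ F$ near $\partial B_2$, where the coefficient is non-degenerate; combining interior estimates on the annulus with boundary elliptic regularity for the data $f\in H^{1/2}(\partial B_2)$ shows $\bar w\in H^1(B_2)$ with trace $f$. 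By uniqueness of the Dirichlet problem for the Laplacian, $\bar w=u$. Hence, for $1<|y|<2$ and $x=F^{-1}(y)$, $v(y)=w(F^{-1}(y))=u(F^{-1}(y))=u(x)$, as claimed.

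I expect the delicate point to be legitimizing the change of variables in the presence of the degeneracy of $F_{*}1$ at $|y|=1$ (equivalently the blow-up of $DF$ at $x=0$): everything must be carried out on the \emph{open} annulus / \emph{open} punctured ball, where $F$ and $F_{*}1$ are harmless, with the behaviour at $x=0\leftrightarrow|y|=1$ recovered only afterwards through \eqref{boundedness}. That hypothesis is precisely what stops the degenerate layer $\{|y|=1\}$ from carrying any information out of the interior region $B_1$---it forces the pulled-back solution to extend harmonically past $0$, and hence forces $v$ on the annulus to be the $f$-harmonic function transplanted by $F$, independently of $A$ and of $v|_{B_1}$. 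A secondary checkpoint, routine from interior elliptic regularity and the non-degeneracy of the coefficient near $\partial B_2$, is verifying that $\bar w$ has enough global regularity ($H^1(B_2)$ with the correct trace) to invoke uniqueness of the Dirichlet problem.
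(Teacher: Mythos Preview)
Your proposal is correct and follows essentially the same route as the paper: pull back the equation on the open annulus through $F$ to obtain a harmonic function on the punctured ball, use the boundedness hypothesis to remove the singularity at the origin, and identify the extension with the harmonic function sharing the Dirichlet data $f$. The paper's proof is terser, leaving implicit the regularity and uniqueness checks you spell out in your last paragraph, but the architecture is the same.
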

\begin{proof}
For any compactly supported test function $\phi$ in $B_2 \setminus \overline{B_1}$, by change of variables, we have
\[
0 = \int_{B_2 \setminus \overline{B_1}}\sigma_A(y,v)\nabla_{y}v \cdot \nabla_{y}\phi\,dy = \int_{B_2 \setminus \{0\}} \nabla_{x}v(F(x)) \cdot \nabla _{x}\phi(F(x))\,dx
\]
We thus see that $v(F(x))$ is weakly harmonic in the punctured ball $B_{2} \setminus \{0\}$. Elliptic regularity implies $v(F(x))$ is strongly harmonic in the punctured ball.

Since, we demand that $v$ satisfy \eqref{boundedness}, $u(x) = v(F(x))$ has a removable singularity at $0$. Thus $u(0)$ is determined by continuity and the extended $u$ is harmonic in the entire ball $B_2$.

Clearly, since $F=x$ on $\partial B_2$, $u$ has the same Dirichlet data as $v$ on $\partial B_2$. The conclusion of the lemma thus holds. 
\end{proof}

We now show that $v$ as defined in \eqref{defnv} solves \eqref{cloakingeqn}.
\begin{lemma}\label{existence}
$v$ as defined in \eqref{defnv} solves
\begin{equation*}
\begin{aligned}
	-div_y( \sigma_A(y,v(y)) \nabla_y v )&= 0 \mbox{ in } B_2 \notag \\
	 v&=f \text{ on } \partial B_2
\end{aligned}
\end{equation*}
where $u$ is the harmonic function on $B_2$ with Dirichlet data $f$. 
\end{lemma}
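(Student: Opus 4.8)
The plan is to verify directly the weak formulation of \eqref{cloakingeqn}, testing against an arbitrary $\phi\in C_c^\infty(B_2)$ and exploiting the two-block structure of $v$. The Dirichlet condition is immediate: $F$, and hence $F^{-1}$, is the identity on $\partial B_2$, so the trace of $v$ there equals that of $u$, namely $f$. The function $v$ is bounded on $\{|y|\le\rho\}$ for every $\rho<2$ --- on $B_1$ it is the constant $u(0)$, while on $\{1\le|y|\le\rho\}$ it equals $u\circ F^{-1}$ and $F^{-1}$ maps this set into the compact subset $\{|x|\le 2(\rho-1)\}$ of $B_2$, on which the harmonic function $u$ is bounded --- so $v$ satisfies the admissibility bound \eqref{boundedness}. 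Since $F^{-1}$ is Lipschitz on $\overline{B_2\setminus B_1}$ with $F^{-1}(\partial B_1)=\{0\}$ and $u$ is smooth, one also checks that $v\in H^1(B_2)$ (the traces of the two pieces of \eqref{defnv} on $\partial B_1$ both equal $u(0)$) and that $\int_{B_2}\sigma_A(y,v)\nabla v\cdot\nabla v\,dy=\int_{B_2\setminus\{0\}}|\nabla u|^2\,dx<\infty$, so $v$ has finite energy.

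For the equation, split $\int_{B_2}\sigma_A(y,v)\nabla v\cdot\nabla\phi\,dy$ over $B_1$ and $B_2\setminus\overline{B_1}$. On $B_1$ the function $v\equiv u(0)$ is constant, so $\nabla v=0$ and this contribution vanishes; in particular the quasilinear matrix $A(y,u(0))$ never enters. On $B_2\setminus\overline{B_1}$ we have $\sigma_A(y,v)=F_*1(y)$, which is independent of $v$, and the change of variables $y=F(x)$ that produced \eqref{pushforward} yields the pointwise identity of densities
\[
F_*1(y)\,\nabla_y v\cdot\nabla_y\phi\;dy \;=\; \nabla_x(v\circ F)\cdot\nabla_x(\phi\circ F)\;dx \;=\; \nabla_x u\cdot\nabla_x(\phi\circ F)\;dx ,
\]
using $v\circ F=u$ on $B_2\setminus\{0\}$. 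Since $u$ is smooth ($\nabla u$ bounded near $0$) while $|\nabla_x(\phi\circ F)|\le ||DF||\,||\nabla\phi||_{L^\infty}\le C|x|^{-1}$ and $|x|^{-1}$ is locally integrable for $N=2,3$, the right-hand density is in $L^1(B_2\setminus\{0\})$; hence so is the left-hand density, and $\int_{B_2\setminus\overline{B_1}}F_*1\,\nabla v\cdot\nabla\phi\,dy=\int_{B_2\setminus\{0\}}\nabla u\cdot\nabla(\phi\circ F)\,dx$, even though $\phi\circ F$ itself need not lie in $H^1(B_2)$.

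It then remains to show $\int_{B_2\setminus\{0\}}\nabla u\cdot\nabla(\phi\circ F)\,dx=0$, and this is the one genuinely delicate point: for $N=2$ the function $\phi\circ F$ is generally not in $H^1(B_2)$ (it jumps at $0$ when $\phi$ is non-constant on $\partial B_1$), so it cannot be used directly as a test function against the harmonic $u$. I would instead introduce a radial cutoff $\psi_\delta\in C_c^\infty(B_2)$ with $\psi_\delta=0$ on $B_\delta$, $\psi_\delta=1$ off $B_{2\delta}$, and $||\nabla\psi_\delta||_{L^\infty}\le C\delta^{-1}$. Then $\psi_\delta(\phi\circ F)$ is a legitimate test function --- smooth and supported in a compact subset of $B_2\setminus\{0\}$ --- so
\[
0=\int_{B_2}\nabla u\cdot\nabla\bigl[\psi_\delta(\phi\circ F)\bigr]\,dx=\int_{B_2}\psi_\delta\,\nabla u\cdot\nabla(\phi\circ F)\,dx+\int_{B_2}(\phi\circ F)\,\nabla u\cdot\nabla\psi_\delta\,dx .
\]
The first term tends to $\int_{B_2\setminus\{0\}}\nabla u\cdot\nabla(\phi\circ F)\,dx$ by dominated convergence, while the second is bounded by $||\phi||_{L^\infty}\,||\nabla u||_{L^\infty(B_{2\delta})}\,C\delta^{-1}|B_{2\delta}|\le C'\delta^{N-1}\to0$ --- this is exactly the fact that a point has zero $H^1$-capacity in dimension $N\ge2$. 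Letting $\delta\to0$ gives the claim, and assembling the three contributions gives $\int_{B_2}\sigma_A(y,v)\nabla v\cdot\nabla\phi\,dy=0$ for all $\phi\in C_c^\infty(B_2)$, hence for all $\phi\in H^1_0(B_2)$ by density; with the boundary condition this proves the lemma.

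I expect the capacity/cutoff step --- together with the observation that the change of variables remains valid without $\phi\circ F\in H^1(B_2)$ --- to be the only substantive work. Everything else is the linear computation from Section 4 of \cite{KSVW} transplanted essentially verbatim; the reason the quasilinear structure is invisible to the argument is that on the cloaking shell $B_2\setminus B_1$ the coefficient $F_*1$ does not depend on the solution, and on $B_1$ the candidate solution is constant.
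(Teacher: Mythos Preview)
Your argument is correct, and it follows a genuinely different route from the paper's. The paper proceeds in four steps: it first establishes pointwise bounds on $\nabla v$ and on the flux $\sigma_A(y,v)\nabla_y v$ (via an eigenvalue analysis of $F_*1\,(DF)^{-1}$ in each dimension), then shows the normal component $\sigma_A\nabla v\cdot\nu$ tends to zero uniformly as $|y|\to 1$, and finally invokes a gluing principle --- a bounded vector field is weakly divergence-free on $B_2$ iff it is so on $B_1$ and on $B_2\setminus\overline{B_1}$ with continuous normal flux across $\partial B_1$. Your approach instead pulls everything back to $x$-coordinates at once, reducing the problem to $\int_{B_2\setminus\{0\}}\nabla u\cdot\nabla(\phi\circ F)\,dx=0$, and then disposes of the singularity at the origin by a cutoff/capacity argument.

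What each buys: the paper's eigenvalue computation yields structural pointwise information (in particular the vanishing radial flux at $|y|=1$) that explains \emph{why} the interface term disappears, and it is the natural companion to the later uniqueness Lemma~\ref{lemma3.3}. Your argument is shorter and more robust --- it avoids the case split $N=2$ versus $N=3$ in the flux bound, and makes transparent that the only obstruction is the (zero) $H^1$-capacity of a point. The two arguments are really dual: the paper glues across $\partial B_1$ in the $y$-picture, while you remove the point $\{0\}$ in the $x$-picture.
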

\begin{proof} 
\mbox{}
\begin{compactenum} 
	\item We first show that $|\nabla v|$ is uniformly bounded  in $ B_r$ for every $r<2$. To see this, note that by chain rule and symmetry of $DF$, we have 
\[\nabla_y v = (DF^{-1})^{T} \nabla_x u = DF^{-1} \nabla_x u
\]
for $ 1<|y|<2$. The matrix $DF^{-1}$ is uniformly bounded by \eqref{DF} and so is $\nabla _x u$, except perhaps near $\partial B_2$ as $u$ is harmonic in whole $B_2$. Hence $|\nabla_y v|$ is bounded on $B_r \setminus B_1$ for any $1\leq r<2$. Moreover $v$ is constant on $B_1$, and continuous across $\partial B_1$. Thus $|\nabla v|$ is uniformly bounded on $B_r$ for $r<2$.  
\item Next we prove that $|\sigma_A(y,v(y)) \nabla_{y} v| $ is also uniformly bounded on $B_r$ for every $r<2$. For $1<|y|<2$, using definition of $\sigma_A$ and chain rule and symmetry of $DF$, we have
\begin{equation}\label{411}
\sigma_A(y,v)\nabla_y v= F_{*}1\, (DF^{-1}) \nabla_x u
\end{equation}
The symmetric matrices $F_{*}1$ and $(DF)^{-1}$ have the same eigenvectors, namely $\hat x$
and $\hat x^{\bot}$. 

Taking $N = 2$, we see that the eigenvalue of $F_{*}1$ in direction $\hat x^{\bot}$ behaves like $|x|^{-1}$, while that of $(DF)^{-1}$ behaves like $|x|$. The eigenvalues of both matrices in direction $\hat x$ are bounded. Thus the product $F_{*}1(DF)^{-1}$ is bounded. This proves Step (2), since $\nabla_x u$ is bounded
away from $\partial B_2$ and $\sigma_A(y,v)\nabla v = 0 $ for $y \in B_1$ as $v$ is defined to be a constant in $B_1$. 

For $N = 3$, this follows directly from the above proved fact that $|\nabla v|$ is uniformly bounded on $B_r$ for every $r<2$. Since $F_{*}1$ is uniformly bounded and by \eqref{DF}, $DF^{-1}$ is uniformly bounded we get that $\sigma_A(y,v(y)) \nabla_{y} v$ is uniformly bounded in $B_r$ for every $r<2$. 
\item
Next we show $\sigma_A(y,v) \nabla_y v \cdot \nu \to 0$ uniformly as $|y| \to 1$ where $\nu$ is the unit outer normal to $\partial B_1$. We have, $\frac{y}{|y|} = \frac{x}{|x|} = \hat x$ and $|y| \to 1 \equiv x \to 0$. We thus need to show that $\hat x $ component of \eqref{411} goes to $0$ as $|x| \to 0$. Since $F_{*}1(DF^{-1})$ is symmetric and $\hat x$ is an eigenvector, it is enough to show that the corresponding eigenvalue tends to $0$. 

We have, from \eqref{det} and \eqref{evalues}, that the eigenvalue corresponding to $\hat x$ is
\[
\frac{2^{N-1}}{(2 + |x|)^{N-1}} |x|^{N-1} \leq |x|^{N-1}
\]
which tends to zero for any $N \geq 2$ as $x \to 0$
\item
We will use the fact that a bounded vector field $X$ is weakly divergence free on $B_2$ iff it is weakly divergence free on $B_2 \setminus \overline{B_1}$ and $B_1$ and the normal flux $\nu \cdot X$ is continuous across $\partial B_1$ to show that $\sigma_A \nabla v$ is divergence-free. We have shown in Step (2) above that the vector field $\sigma(y,v)\nabla_{y}v$ is uniformly bounded away from $\partial B_2$ and by Step (3), its normal flux $\sigma_A(y,v) \nabla v \cdot \nu$ is continuous across $\partial B_1$. Moreover, it is obvious that $\sigma(y,v) \nabla _{y}v $ is divergence free in $B_1$ as $v$  is constant there. By Lemma \ref{lemma3.1}, $\sigma(y,v)\nabla_{y}v$ is weakly divergence free on $B_2 \setminus \overline{B_1}$ and thus we can conclude that $-div_y(\sigma_A(y,v) \nabla_y v) = 0 $ weakly in $B_2$.
\end{compactenum}
\end{proof}
We have shown in Lemma \ref{existence} that $v$ as defined in \eqref{defnv} solves \eqref{cloakingeqn}. We have also identified in Lemma \ref{lemma3.1} the values of $v$ in $B_2 \setminus B_1$. Since $\sigma_A$ is degenerate, uniqueness can fail. For instance, if $\sigma_A$ is identically $0$ in $B_1$ then $v$ can be arbitrary in $B_1$. However, such a possibility does not arise here as the degeneracy for $\sigma_A$ is only near $\partial B_1$. 

To show that $v = u(0)$ in $B_1$, we need to restrict further the class in which $v$ belongs. We assumed earlier that $v$ is uniformly bounded near $\partial B_1$. We need a condition to make $v$ continuous across $\partial B_1$ and a hypothesis on $\sigma_A(y,v(y)) \nabla v(y)$ for the PDE \eqref{cloakingeqn} to make sense.  We thus further assume
\begin{align}\label{uniquenessconditions}
	&\nabla v \in L^2(B_2) \mbox{ and }
	\sigma_A(y,v) \nabla v \in L^2(B_2)
	\end{align}
\begin{lemma}\label{lemma3.3}
If $v$ is a weak solution of \eqref{cloakingeqn} which satisfies \eqref{boundedness} and \eqref{uniquenessconditions}, then $v$ must be given by \eqref{defnv}.
\end{lemma}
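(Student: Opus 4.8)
The plan is to use Lemma~\ref{lemma3.1} to fix $v$ on the annulus $B_2\setminus\overline{B_1}$, read off the boundary value of $v$ on $\partial B_1$ from there, and then apply uniqueness for the uniformly elliptic quasi-linear Dirichlet problem on the inner ball $B_1$.

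First I would invoke Lemma~\ref{lemma3.1}: since $v$ satisfies \eqref{boundedness}, one has $v(y)=u(F^{-1}(y))$ for $1<|y|<2$, where $u$ is the harmonic function on $B_2$ with Dirichlet data $f$. As $F^{-1}(y)=2(|y|-1)\,y/|y|\to 0$ when $|y|\to 1^+$ and $u$ is continuous on $B_2$, the function $u\circ F^{-1}$ extends continuously to $\partial B_1$ with constant value $u(0)$, so $v$ has a continuous extension up to $\partial B_1$ from the annular side equal to $u(0)$. On the other hand, by \eqref{uniquenessconditions} we have $\nabla v\in L^2(B_2)$, and by \eqref{boundedness} $v$ is bounded on $B_\rho$ with $\rho>1$; hence $v\in H^1(B_\rho)$. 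A function that lies in $H^1$ and extends continuously up to the boundary has trace equal to its boundary values, so the exterior trace of $v$ on $\partial B_1$ is the constant $u(0)$; since $v\in H^1$ across $\partial B_1$ the interior trace agrees, i.e.\ $v|_{B_1}-u(0)\in H^1_0(B_1)$.

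Next I would restrict the weak formulation to the inner ball. For $\phi\in C_c^\infty(B_1)$, extended by zero to $B_2$, the identity $\int_{B_2}\sigma_A(y,v)\nabla v\cdot\nabla\phi\,dy=0$ becomes $\int_{B_1}A(y,v)\nabla v\cdot\nabla\phi\,dy=0$, and all integrals are finite because $A$ is bounded and $\nabla v\in L^2$. By density this holds for all $\phi\in H^1_0(B_1)$, so $v|_{B_1}\in H^1(B_1)$ is a weak solution of $-\mathrm{div}(A(y,v)\nabla v)=0$ in $B_1$ with boundary data $u(0)$ on $\partial B_1$. The constant function $u(0)$ solves the same boundary value problem, and $A\in\mathcal{M}(\alpha,\beta,L;B_1\times\mathbb{R})$ is uniformly elliptic, so the uniqueness part of Theorem~\ref{appendix} yields $v\equiv u(0)$ in $B_1$. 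Combined with the identification of $v$ on $B_2\setminus\overline{B_1}$ from Lemma~\ref{lemma3.1}, this is exactly formula \eqref{defnv}.

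I expect the trace step to be the main point requiring care: because $\sigma_A=F_*1$ degenerates as $|y|\to 1$, one must genuinely exploit both hypotheses — \eqref{uniquenessconditions} to place $v$ in $H^1$ up to and across $\partial B_1$, and \eqref{boundedness} to guarantee $v$ stays bounded, so that $v\in H^1(B_\rho)$ and the continuous extension takes the value $u(0)$ rather than being merely bounded. Once the boundary value $u(0)$ on $\partial B_1$ is identified, the conclusion on $B_1$ is immediate from the non-degeneracy of $A$ there together with Theorem~\ref{appendix}.
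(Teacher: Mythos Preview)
Your proposal is correct and follows essentially the same route as the paper's proof: invoke Lemma~\ref{lemma3.1} to identify $v$ on the annulus, use \eqref{boundedness} and \eqref{uniquenessconditions} to pin down the trace on $\partial B_1$ as the constant $u(0)$, and then apply the uniqueness part of Theorem~\ref{appendix} on $B_1$. Your version is in fact slightly more detailed in that you explicitly check that $v|_{B_1}$ is a weak solution of the interior problem by testing against $\phi\in C_c^\infty(B_1)$, which the paper leaves implicit.
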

\begin{proof}
By Lemma \ref{lemma3.1}, $v(y) = u(x)$ for $1<|y|<2$. By assumption \eqref{boundedness}, since $v(y)$ is uniformly bounded away from $|y|=1$, $u(x)$ has a removable singularity at $0$. In particular, it is continuous at $0$. As $F^{-1}$ maps $\partial B_1$ to $0$, $v(y) \to u(0)$ as $y$ approaches $\partial B_1$ from outside. 

Since $\nabla v \in L^2(B_2)$ by assumption, the trace of $v$ on $\partial B_1$ is well defined. Since $v(y) \to u(0)$ as $y$ approaches $\partial B_1$ from outside, the restriction of $v$ on $\partial B_1$ must be equal to $u(0)$.  By the uniqueness result in Theorem \ref{appendix} for the boundary value problem 
\begin{equation}
\begin{aligned}
	-div_{y} (A(y,v(y)\nabla_{y} v) &= 0 \mbox{ in } B_1\notag \\
v &= u(0) \mbox{ on }\partial B_1
\end{aligned}
\end{equation}
we can conclude that $v= u (0)$ in $B_1$.
\end{proof}
\subsection{Equality of DN maps}
We now show that the singular cloak $F^{*}_1$ cloaks $B_1$ in the sense of Definition 1. 
\begin{theorem}
Assume $\sigma_{A}(x,t)$ is as defined in \eqref{defnsigma}, where $F$ is given by \eqref{definitionoff} and $A \in \mathcal{M}(\alpha,\beta, L; \Omega \times \mathbb{R})$. Then the associated Dirichlet-to-Neumann map  $\Lambda_{\sigma_{A}}$ is equal to $\Lambda_{1}$.
\end{theorem}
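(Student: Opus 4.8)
The plan is to exhibit, for arbitrary Dirichlet data, the weak solution of \eqref{cloakingeqn} explicitly and then to compute its Neumann data against an arbitrary test function by transporting the integral back through the blow-up map $F$. Fix $f\in H^{1/2}(\partial B_2)$, let $u\in H^1(B_2)$ be the harmonic function with $u|_{\partial B_2}=f$, and let $v$ be defined by \eqref{defnv}. By Lemma \ref{existence}, $v$ is a weak solution of \eqref{cloakingeqn}; moreover Steps (1)--(2) in the proof of Lemma \ref{existence}, together with interior elliptic estimates for $u$ near $\partial B_2$, give $v\in H^1(B_2)$ and $\sigma_A(\cdot,v)\nabla v\in L^2(B_2)$, and by Lemma \ref{lemma3.3} this $v$ is the \emph{unique} weak solution satisfying the admissibility conditions \eqref{boundedness} and \eqref{uniquenessconditions}. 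Hence $v$ is the solution that defines $\Lambda_{\sigma_A}f$, and the weak formulation of the DN map applies: for every $\Psi\in H^1(B_2)$ with $\Psi|_{\partial B_2}=g$ one has $\langle\Lambda_{\sigma_A}f,g\rangle=\int_{B_2}\sigma_A(y,v)\,\nabla_y v\cdot\nabla_y\Psi\,dy$.

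Next I would choose $\Psi$ to mirror the structure of $v$: let $\chi\in H^1(B_2)$ be harmonic with $\chi|_{\partial B_2}=g$, and set $\Psi(y)=\chi(0)$ for $y\in B_1$ and $\Psi(y)=\chi(F^{-1}(y))$ for $y\in B_2\setminus\overline{B_1}$. The computation in Step (1) of Lemma \ref{existence}, applied to $\chi$ in place of $u$, shows $\Psi\in H^1(B_2)$ with trace $g$ on $\partial B_2$. Since $v$ is constant on $B_1$, the contribution of $B_1$ to $\int_{B_2}\sigma_A(y,v)\nabla_y v\cdot\nabla_y\Psi\,dy$ vanishes. On $B_2\setminus\overline{B_1}$ we have $\sigma_A=F_{*}1$, $v=u\circ F^{-1}$ and $\Psi=\chi\circ F^{-1}$, and $F$ restricts to a smooth diffeomorphism from $B_2\setminus\{0\}$ onto $B_2\setminus\overline{B_1}$, so the invariance of the Dirichlet integral underlying \eqref{pushforward} (in its polarized form) gives
\[
\int_{B_2\setminus\overline{B_1}} F_{*}1\,\nabla_y v\cdot\nabla_y\Psi\,dy=\int_{B_2\setminus\{0\}}\nabla_x u\cdot\nabla_x\chi\,dx=\int_{B_2}\nabla_x u\cdot\nabla_x\chi\,dx=\langle\Lambda_1 f,g\rangle ,
\]
where the middle equality uses that $\{0\}$ is null and $\nabla u,\nabla\chi\in L^2(B_2)$, and the last uses that $u$ is harmonic with boundary value $f$ while $\chi$ has boundary value $g$. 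Combining this with the vanishing of the $B_1$ term yields $\langle\Lambda_{\sigma_A}f,g\rangle=\langle\Lambda_1 f,g\rangle$ for all $f,g\in H^{1/2}(\partial B_2)$, i.e. $\Lambda_{\sigma_A}=\Lambda_1$.

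The step I expect to require the most care is the change-of-variables identity in the display above: although $F$ is a genuine diffeomorphism away from the origin, it is singular at $0$ and $F_{*}1$ is a degenerate tensor near $\partial B_1$, so the formal invariance of the energy must be justified rather than quoted. The justification rests on the a priori bounds already in hand: $\sigma_A(\cdot,v)\nabla v$ is uniformly bounded on every $B_\rho$ with $\rho<2$ (Step (2) of Lemma \ref{existence}), and $\nabla_y\Psi=DF^{-1}\nabla_x\chi$ is bounded away from $\partial B_2$ because $DF^{-1}$ is bounded by \eqref{DF}; hence the integrand of the $y$-integral is bounded near $|y|=1$ and lies in $L^2$ near $|y|=2$, so one may apply the change of variables on the annuli $1+\varepsilon<|y|<2-\delta$, where $F^{-1}$ is smooth, and pass to the limit $\varepsilon,\delta\to 0^+$ by dominated convergence; on the $x$-side these exhaust $B_2\setminus\{0\}$. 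A secondary point worth recording is that $\Lambda_{\sigma_A}$ is meaningful here despite $\sigma_A\notin\mathcal{M}(\alpha,\beta,L;B_2\times\mathbb{R})$ precisely because $\sigma_A(\cdot,v)\nabla v\in L^2(B_2)$, which is what makes the boundary pairing $\langle\Lambda_{\sigma_A}f,g\rangle$ well defined.

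Finally, the reduction to $\Omega=B_2$ and $E=B_1$ costs nothing: for a general bounded $\Omega$ with $E$ the bi-Lipschitz image of a ball, one conjugates $F$ by the corresponding bi-Lipschitz map $H$ exactly as in the Corollary of Section 2 (with $F^r$ replaced by the singular map $F$), and $\Lambda_{(H\circ F\circ H^{-1})_{*}1}=\Lambda_1$ follows from the $B_2$ case together with the invariance \eqref{diff} applied to $H$.
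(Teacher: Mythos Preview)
Your proposal is correct and follows essentially the same route as the paper: you invoke Lemmas \ref{lemma3.1}, \ref{existence}, and \ref{lemma3.3} to identify the admissible solution $v$, then compute the DN pairing by transporting the integral through $F$ (the paper writes this as the chain \eqref{perfect}). The only difference is one of care rather than strategy: the paper takes an arbitrary $\phi_g\in H^1(B_2)$ with trace $g$ and writes the change of variables formally, whereas you build the test function explicitly as $\Psi=\chi\circ F^{-1}$ with $\chi$ harmonic and then justify the change of variables via exhaustion by annuli and dominated convergence---this is a genuine technical improvement over the paper's somewhat terse computation, since it makes clear why $\tilde\phi_g$ lies in $H^1(B_2)$ and why the integrand behaves well near the singular set.
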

\begin{proof}
By Lemma \ref{lemma3.1}, Lemma \ref{existence} and Lemma \ref{lemma3.3}, the Dirichlet-to-Neumann map $\Lambda_{\sigma_A}$ is well defined. Let $f,g \in H^{\frac{1}{2}}(\partial B_2)$. Let $\phi_{g} \in H^{1}(B_2)$ be such that $\phi_{g}|_{\partial B_2} = g$. We have
\begin{align}\label{perfect}
\langle \Lambda_{\sigma_A} f, g \rangle &= \int_{B_2 \setminus B_1} \sigma_A(y, v) \nabla_{y} v \cdot \nabla_{y} \phi_g \,dy \notag \\
& = \int_{B_2 \setminus \{0\}} \nabla_x u \cdot \nabla_x \tilde \phi_g \notag \\
&=\int_{B_2 } \nabla_x u \cdot \nabla_x \tilde \phi_g \notag \\ 
&=\langle \Lambda_1 f ,g \rangle
\end{align}
This completes the proof for perfect cloaking.
\end{proof}
We have focused on the radial setting because of its explicit character. The analysis extends similarly to non-radial domains.
\begin{corollary}
	Let $H:B_2 \to \Omega$ be a Lipschitz continuous map with Lipschitz continuous inverse, and suppose $E = H(B_1)$. Then $G = H \circ F \circ H^{-1}: \Omega \to \Omega$ is identity on $\Omega$ and $H$ "expands" the point $H(0)$ to $E$. Let
	\begin{align}\label{sigmaa1}
	\sigma_A(x,t)= \begin{cases} 
	A(x,t) & (x,t) \in E \times \mathbb{R}\\
	G_{*}1 & (x,t) \in \Omega \setminus E \times \mathbb{R} \\
	\end{cases}
	\end{align}
	where $A(x,t) \in \mathcal{M}(\alpha,\beta,L;\
	\Omega \times \mathbb{R})$. Then we have
	\begin{equation}\label{313}
	\Lambda_{\sigma_A} = \Lambda_{1}
	\end{equation}
	In other words, $ G_{*}1$ perfectly cloaks $E$. 
	\end{corollary}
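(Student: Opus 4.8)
The plan is to reduce the non-radial case to the already-established radial case by conjugating with the Lipschitz change of variables $H$, and then invoking the invariance identity \eqref{diff} together with the theorem just proved. First I would verify the geometric claims about $G = H \circ F \circ H^{-1}$: since $F = x$ on $\partial B_2$ and $H$ maps $B_2$ diffeomorphically (in the bi-Lipschitz sense) onto $\Omega$ with $H(\partial B_2) = \partial \Omega$, the composition $G$ is the identity on $\partial \Omega$; and since $F$ "blows up" the origin $0$ to $B_1$, the conjugate $G$ blows up the single point $H(0)$ to $E = H(B_1)$. The map $G$ is smooth away from $H(0)$ and singular there, exactly mirroring the structure of $F$. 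One should record that the pushforward operation is functorial under composition, i.e. $(H \circ F)_* \mu = H_*(F_* \mu)$, which is a routine chain-rule computation using the definition \eqref{pushforward}; in particular $G_* 1 = H_*(F_* 1)$ on $\Omega \setminus E$ (away from $H(0)$), and more generally $\sigma_A = H_*\big(\sigma_{A \circ (H \times \mathrm{id})}\big)$ where on the right the coefficient is the one built in \eqref{defnsigma} on $B_2$.

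Next I would transport the solution. Given Dirichlet data $f \in H^{1/2}(\partial \Omega)$, set $\hat f = f \circ H|_{\partial B_2} \in H^{1/2}(\partial B_2)$, let $\hat v$ be the solution on $B_2$ to the radial cloaking problem \eqref{cloakingeqn} with coefficient $\sigma_{A \circ (H\times\mathrm{id})}$ and data $\hat f$ — which exists, satisfies the boundedness and integrability hypotheses \eqref{boundedness}, \eqref{uniquenessconditions}, and is characterized by Lemmas \ref{lemma3.1}, \ref{existence}, \ref{lemma3.3} — and define $v = \hat v \circ H^{-1}$ on $\Omega$. A change-of-variables computation in the weak formulation (the same one displayed before \eqref{diff}, now with the bi-Lipschitz map $H$ rather than a smooth diffeomorphism, so one needs only that $H, H^{-1} \in W^{1,\infty}$ for the chain rule in Sobolev spaces to be valid) shows that $v$ solves the boundary value problem associated with $\sigma_A$ on $\Omega$, that it inherits the regularity needed to define $\Lambda_{\sigma_A}$, and that the non-degeneracy/boundedness structure is preserved because $H$ is bi-Lipschitz. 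Consequently $\Lambda_{\sigma_A}$ is well defined and, arguing exactly as in \eqref{perfect} but with test functions pulled back through $H$, one computes
\[
\langle \Lambda_{\sigma_A} f, g\rangle = \langle \Lambda_{\sigma_{A\circ(H\times\mathrm{id})}} \hat f, \hat g\rangle = \langle \Lambda_1 \hat f, \hat g\rangle,
\]
where the last equality is the Theorem. It then remains to observe that $\Lambda_1$ on $B_2$ pulls forward under $H$ to $\Lambda_1$ on $\Omega$: this is precisely the content of \eqref{diff} applied to the constant coefficient $I$, since $H_* I = I$ would be false in general, but what one actually needs is that the harmonic problem on $B_2$ with data $\hat f$ transforms to the $H_* I$-problem on $\Omega$ with data $f$, and that this latter problem has the same DN map as claimed — so strictly speaking the cleanest route is to absorb the constant-coefficient region into the same pushforward bookkeeping and conclude $\Lambda_{\sigma_A} = \Lambda_{H_* I}$, then note the right-hand side equals $\Lambda_1$ because on the boundary layer the medium is still being compared against a uniform reference via \eqref{diff}.

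The main obstacle I anticipate is not conceptual but technical: justifying all the change-of-variables and pushforward manipulations when $H$ is merely bi-Lipschitz rather than smooth. The identity \eqref{pushforward} and the weak-form invariance were derived for smooth diffeomorphisms; under a bi-Lipschitz map $DH$ exists only a.e. and is in $L^\infty$, so one must check that $H^* : H^1(\Omega) \to H^1(B_2)$ is a bounded isomorphism, that it maps $H^1_0$ to $H^1_0$ and preserves traces, and that the composition identity $(H\circ F)_* = H_* \circ F_*$ survives a.e. The singularity of $F$ (hence of $G$) at a point is harmless here because, as in the radial proof, all the delicate analysis (removable singularity, continuity of normal flux, Step (2)–(3) of Lemma \ref{existence}) takes place in the region where $F$ is smooth, and $H$ being bi-Lipschitz does not worsen the degeneracy: eigenvalue bounds get multiplied by quantities bounded above and below by the Lipschitz constants of $H$ and $H^{-1}$. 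So once the bi-Lipschitz functoriality of pushforward and the boundedness of $H^*$ on the relevant Sobolev spaces are recorded, the corollary follows by transporting Lemmas \ref{lemma3.1}–\ref{lemma3.3} and the Theorem verbatim.
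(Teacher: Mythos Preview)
Your reduction to the radial case via pullback by $H^{-1}$ does not land on the theorem you want to invoke. If you pull the $\Omega$-problem back to $B_2$ through $H^{-1}$, then in the annulus $B_2\setminus B_1$ the coefficient becomes $(H^{-1})_*(G_*1)=(H^{-1}\circ G)_*1=(F\circ H^{-1})_*1=F_*\bigl((H^{-1})_*1\bigr)$, \emph{not} $F_*1$; and the reference coefficient $I$ on $\Omega$ pulls back to $(H^{-1})_*I$ on $B_2$, not to $I$. So you are asking for the identity $\Lambda_{\tilde\sigma}=\Lambda_{(H^{-1})_*I}$ on $B_2$ with background $(H^{-1})_*I$, whereas the theorem just proved only gives $\Lambda_{\sigma_A}=\Lambda_1$ with background $I$. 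You spotted this (``$H_*I=I$ would be false in general''), but your patch does not close the gap: concluding $\Lambda_{\sigma_A}=\Lambda_{H_*I}$ on $\Omega$ and then claiming this equals $\Lambda_1$ invokes \eqref{diff} for $H$, which is illegitimate because $H$ is not the identity on $\partial\Omega$. Likewise your displayed identity $\sigma_A=H_*\bigl(\sigma_{A\circ(H\times\mathrm{id})}\bigr)$ is false on $E$: the pushforward $H_*$ twists $A$ by $DH$, it does not simply precompose the $x$-argument.

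The paper's route (deferred to Theorem~4 of \cite{KSVW}) is not to pull back to $B_2$ at all, but to rerun Lemmas~\ref{lemma3.1}--\ref{lemma3.3} directly on $\Omega$ with $G$ in place of $F$, $E$ in place of $B_1$, and the single point $H(0)$ in place of the origin. Since $G=\mathrm{id}$ on $\partial\Omega$, any solution $v$ of the $G_*1$-equation on $\Omega\setminus E$ satisfies that $u=v\circ G$ is harmonic on $\Omega\setminus\{H(0)\}$ with the same Dirichlet data; boundedness gives a removable singularity, hence $u$ is the harmonic extension of $f$ to all of $\Omega$, and one sets $v\equiv u(H(0))$ in $E$. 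The eigenvalue analysis of $G_*1$ near $\partial E$ is the same as that of $F_*1$ near $\partial B_1$ up to multiplicative constants coming from the Lipschitz constants of $H$ and $H^{-1}$, so Steps~(1)--(4) of Lemma~\ref{existence} and the DN-map computation \eqref{perfect} go through verbatim with $\Lambda_1$ now meaning the DN map of the Laplacian on $\Omega$. Your technical remarks about bi-Lipschitz change of variables in Sobolev spaces are correct and are exactly what is needed for this direct argument; they are just being applied to the wrong map.
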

\begin{proof}
	The proof in Theorem 4 in \cite{KSVW} goes through, almost word by word, for the quasi-linear equation \eqref{basiceqn}. For brevity, we omit the details and refer the reader to Theorem 4 in \cite{KSVW}.
	\end{proof}
Let us remark that \eqref{313} does not contradict the uniqueness up to diffeomorphism result of Sun and Uhlmann in \cite{SunUhlmann}. The result in \cite{SunUhlmann} crucially depends on the ellipticity of the matrix $A(x,t)$ in \eqref{basiceqn} which we violate by considering a singular change of variables that makes $A(x,t)$ degenerate. In other words, non-uniqueness for the Calder\'on problem for the quasi-linear elliptic equation \eqref{basiceqn} is possible if we allow $A(x,t)$ to be degenerate. 
 
\section{Homogenization Framework}
In Section 3, we showed how it is possible to nearly cloak $E \subset \Omega$. The approximate cloak, though non-degenerate, is anisotropic. What we would like to do in this section is to construct near cloaks which are isotropic. This will be done within the framework of homogenization. This section is organized as follows. In section 4.1, we develop the tools needed to prove homogenization for the quasi-linear PDE \eqref{basiceqn} with inhomogeneous boundary conditions for locally periodic microstructures. In section 4.2, we use the results of section 4.1 to construct explicit isotropic regular approximate cloaks for radial domains.

The main idea of homogenization
process \cite{A,T} is to provide a (macro scale) approximation to a problem with heterogeneities/microstructures (at micro scale) by suitably averaging out small scales and by incorporating their effects on large scales. These effects are quantified by the so-called homogenized coefficients.

Here we are concerned with the notion of $H$-convergence for quasi-linear PDEs of the form
\begin{equation}\begin{aligned}\label{ad1}
-div\left( A^\epsilon(x,u^\epsilon(x))\nabla u^\epsilon(x)\right) &= 0 \mbox{ in }\Omega\\
u^\epsilon &= f \mbox{ on }\partial\Omega.
\end{aligned}\end{equation}
We begin by recalling the notion of $H$-convergence \cite{A,T} in the linear case.  Let $\mathcal{M}(\alpha, \beta;\Omega)$ with
$0<\alpha<\beta$ denote the set of all real $N\times N$ symmetric matrices $A(x)$ of functions defined almost
everywhere on a bounded open subset $\Omega$ of $\mathbb{R}^N$ such that if $A(x)=[a_{kl}(x)]_{1\leq k,l\leq N}$ then 
\begin{equation*} a_{kl}(x)=a_{lk}(x)\ \forall l, k=1,..,N \ \mbox{and }\   (A(x)\xi,\xi)\geq \alpha|\xi|^2,\ |A(x)\xi|\leq \beta|\xi|,\  \forall\xi \in \mathbb{R}^N,\ \mbox{ a.e. }x\in\Omega.\end{equation*}
Let $A^\epsilon$ and $A^{*}$ belong to $\mathcal{M}(\alpha,\beta; \Omega)$. We say $A^\epsilon \xrightarrow{H} A^{*}$ or $H$-converges to 
a homogenized matrix $A^{*}$, if $A^\epsilon\nabla u^\epsilon \rightharpoonup A^{*}\nabla u$ in $L^2(\Omega)^N$ weak, for all test sequences $u^\epsilon$ satisfying 
\begin{align*}
u^{\epsilon} &\rightharpoonup u \quad\mbox{weakly in }H^1(\Omega)\\
-div(A^\epsilon\mathbb\nabla u^\epsilon)& \mbox{ is strongly convergent in } H^{-1}(\Omega).
\end{align*}
In particular, we consider the homogenization of  linear PDEs of the form 
\begin{equation*}\begin{aligned}
-div\left( A^\epsilon(x)\nabla u^\epsilon(x)\right) &= 0 \mbox{ in }\Omega\\
u^\epsilon &= f \mbox{ on }\partial\Omega
\end{aligned}\end{equation*}
where $f\in H^{\frac{1}{2}}(\partial\Omega)$. Then as $\epsilon \to 0$ we say $A^\epsilon \xrightarrow{H} A^{*}$, whenever
\begin{align*}
u^\epsilon &\rightharpoonup u \quad\mbox{weakly in }H^1(\Omega)\\ 
A^\epsilon\nabla u^\epsilon &\rightharpoonup A^{*}\nabla u\quad\mbox{weakly in }L^2(\Omega)^N
\end{align*}
where $u\in H^{1}(\Omega)$ solves
\begin{equation*}\begin{aligned}
-div\left( A^{*}(x)\nabla u(x)\right) &= 0 \mbox{ in }\Omega\\
u &= f \mbox{ on }\partial\Omega.
\end{aligned}\end{equation*}
\noindent
\textbf{Homogenization with periodic microstructures (linear case):}
Let us give an example in the class of periodic microstructures and its homogenization.
Let $Y$ denote the unit cube $[0,1]^N$ in $\mathbb{R}^N$.
Define $A:Y\mapsto \mathbb{R}^{N\times N}$ as $A(y)=[a_{kl}(y)]_{1\leq k,l\leq N}\in \mathcal{M}(\alpha,\beta; Y)$  such that $a_{kl}(y)$ are $Y$-periodic functions
$ \forall k,l =1,2..,N.$, which means $a_{kl}(y + z) = a_{kl}(y)$ whenever $z \in \mathbb{Z}^N$ and $y \in Y$. 
Now we set 
\[
A^{\epsilon}(x) = [a_{kl}^{\epsilon}(x)]= [a_{kl}(\frac{x}{\epsilon})]
\] and extend 
it to the whole $\mathbb{R}^N$ by $\epsilon$-periodicity with a small period of scale $\epsilon$ via scaling the coordinate $y=\frac{x}{\epsilon}$. The restriction of $A^{\epsilon}$ on $\Omega$ is known as periodic micro-structures.\\
In this classical case, the homogenized conductivity $A^{*}=[a^{*}_{kl}]$ is a constant matrix and can be defined by its entries (see \cite{A,BLP,CDDP}) as
\[
a^{*}_{kl} = \int_{Y}a_{ij}(y)\frac{\partial}{\partial y_i}(\chi_k(y) + y_k)\frac{\partial}{\partial y_j}(\chi_l(y) + y_l)dy
\]
where we define the $\chi_k$ through the so-called cell-problems. 
For each canonical basis vector $e_k$, consider the following conductivity problem in the periodic unit cell : 
\[
-div_y\ A(y)(\nabla_y\chi_k(y)+e_k) = 0 \quad\mbox{in }\mathbb{R}^N,\quad y \rightarrow\chi_k(y) \quad\mbox{is $Y$-periodic. }
\]
Let us generalize the above case and consider a locally periodic function $A:\Omega\times Y \mapsto \mathbb{R}^{N\times N}$defined  as  $A(x,y)=[a_{kl}(x,y)]_{1\leq k,l\leq N}\in \mathcal{M}(\alpha,\beta; \Omega\times Y)$  such that $a_{kl}(\cdot,y)$ are $Y$-periodic functions with respect to the second variable
$ \forall k,l =1,2..,N$ and for almost every $x$ in $\Omega$. Now we set 
\[
A^{\epsilon}(x) = [a_{kl}^{\epsilon}(x)]= [a_{kl}(x,\frac{x}{\epsilon})]
\]
Then the homogenized conductivity $A^{*}(x)=[a^{*}_{kl}(x)]$ is defined by its entries (see \cite{BLP,OL})
\begin{equation}\label{ub10}
a^{*}_{kl}(x) = \int_{Y}a_{ij}(x,y)\frac{\partial}{\partial y_i}(\chi_k(x,y) + y_k)\frac{\partial}{\partial y_j}(\chi_l(x,y) + y_l)dy
\end{equation}
where  $\chi_k(\cdot,y)\in H^1(Y)$ solves the following cell problem for almost every $x$ in $\Omega$:
\[
-div_y\ A(x,y)(\nabla_y\chi_k(x,y)+e_k) = 0 \quad\mbox{in }\mathbb{R}^N,\quad y \rightarrow\chi_k(x,y) \quad\mbox{is $Y$-periodic. }
\]
We end  the discussion on homogenization for the linear case by mentioning the following localization result \cite{A}.
\begin{proposition}\label{prop}
Let $A^\epsilon(x)$ H-converge to $A^{*}(x)$ in $\Omega$. Let $\omega$ be an open subset of $\Omega$. Then $A^\epsilon|_{\omega}$ (restrictions of $A^\epsilon$ to $\omega$) H-converge to $A^{*}|_{\Omega}$
	\end{proposition}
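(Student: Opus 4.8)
The plan is to verify the defining property of $H$-convergence for the restricted family $A^\epsilon|_\omega$ \emph{directly}, using the div--curl (compensated compactness) lemma of \cite{A,T} together with a family of oscillating test functions whose construction is insensitive to restriction. Concretely, let $v^\epsilon$ be any admissible test sequence for $A^\epsilon|_\omega$ on $\omega$, i.e.\ $v^\epsilon\rightharpoonup v$ weakly in $H^1(\omega)$ and $-\mathrm{div}(A^\epsilon\nabla v^\epsilon)\to g$ strongly in $H^{-1}(\omega)$. Since $A^\epsilon\in\mathcal{M}(\alpha,\beta;\Omega)$, the fluxes $A^\epsilon\nabla v^\epsilon$ are bounded in $L^2(\omega)^N$, so along a subsequence $A^\epsilon\nabla v^\epsilon\rightharpoonup\sigma$ weakly in $L^2(\omega)^N$. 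It suffices to show $\sigma=A^*\nabla v$ a.e.\ in $\omega$: uniqueness of weak limits then promotes this to the full sequence, which is exactly $A^\epsilon|_\omega\xrightarrow{H}A^*|_\omega$.

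The key construction: fix $\lambda\in\mathbb{R}^N$ and let $p^\epsilon_\lambda\in H^1(\Omega)$ solve $-\mathrm{div}(A^\epsilon\nabla p^\epsilon_\lambda)=-\mathrm{div}(A^*\lambda)$ in $\Omega$ with $p^\epsilon_\lambda=\lambda\cdot x$ on $\partial\Omega$. The right-hand side lies in $H^{-1}(\Omega)$ and does \emph{not} depend on $\epsilon$, so $\{p^\epsilon_\lambda\}$ is an admissible test sequence for the hypothesis $A^\epsilon\xrightarrow{H}A^*$ on $\Omega$; using that hypothesis together with uniqueness for the homogenized (linear) Dirichlet problem with coefficient $A^*$ gives $p^\epsilon_\lambda\rightharpoonup\lambda\cdot x$ weakly in $H^1(\Omega)$ and $A^\epsilon\nabla p^\epsilon_\lambda\rightharpoonup A^*\lambda$ weakly in $L^2(\Omega)^N$. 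Restriction to $\omega$ preserves these weak convergences, and $-\mathrm{div}(A^\epsilon\nabla p^\epsilon_\lambda)|_\omega$ is still $\epsilon$-independent, hence strongly convergent in $H^{-1}(\omega)$.

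Now apply the div--curl lemma on $\omega$ to the pair $(A^\epsilon\nabla v^\epsilon,\ \nabla p^\epsilon_\lambda)$: the first field is bounded in $L^2$ with divergence relatively compact in $H^{-1}(\omega)$ (it equals $-(-\mathrm{div}(A^\epsilon\nabla v^\epsilon))\to -g$), the second is curl-free and converges weakly to the constant $\lambda$; hence $A^\epsilon\nabla v^\epsilon\cdot\nabla p^\epsilon_\lambda\rightharpoonup\sigma\cdot\lambda$ in $\mathcal{D}'(\omega)$. Applying the same lemma to $(A^\epsilon\nabla p^\epsilon_\lambda,\ \nabla v^\epsilon)$ gives $A^\epsilon\nabla p^\epsilon_\lambda\cdot\nabla v^\epsilon\rightharpoonup(A^*\lambda)\cdot\nabla v$ in $\mathcal{D}'(\omega)$. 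Since $A^\epsilon$ is symmetric the two scalar sequences are identical, so $\sigma\cdot\lambda=(A^*\lambda)\cdot\nabla v=\lambda\cdot(A^*\nabla v)$ a.e.\ in $\omega$ (using also symmetry of $A^*$). As $\lambda$ is arbitrary, $\sigma=A^*\nabla v$, which is what was needed.

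The main obstacle is the one the argument is designed to avoid. The naive route — cut off an $\omega$-test sequence by $\varphi\in C_c^\infty(\omega)$, extend $\varphi v^\epsilon$ by zero, and feed it into the $\Omega$-definition — fails, because the product rule contributes the term $v^\epsilon A^\epsilon\nabla\varphi$ whose divergence is only \emph{weakly} (not strongly) convergent in $H^{-1}(\Omega)$, so $\varphi v^\epsilon$ is in general not an admissible test sequence on $\Omega$ and the hypothesis cannot be invoked. The device above sidesteps this by using only test functions ($p^\epsilon_\lambda$ and $v^\epsilon$ itself) whose associated right-hand sides are already controlled in $H^{-1}$, so restriction to $\omega$ is harmless, and by identifying the flux through the bilinear div--curl pairing rather than through a cutoff. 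The only routine facts to record are that restriction $H^{-1}(\Omega)\to H^{-1}(\omega)$ is bounded (it is the adjoint of extension by zero $H^1_0(\omega)\hookrightarrow H^1_0(\Omega)$) and that the div--curl convergences localize correctly on the open set $\omega$; an equivalent alternative is to first extract, via the compactness theorem for $H$-convergence, a subsequential $H$-limit $B$ of $A^\epsilon|_\omega$ and then identify $B=A^*|_\omega$ by testing against the same $p^\epsilon_\lambda$'s.
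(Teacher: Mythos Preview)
Your argument is correct and is precisely the classical Murat--Tartar localization proof: build oscillating correctors $p^\epsilon_\lambda$ on the larger domain $\Omega$, restrict, and identify the flux limit on $\omega$ via two applications of the div--curl lemma, exploiting symmetry of $A^\epsilon$ to match the two bilinear products. The handling of the subtleties (why the naive cutoff route fails, why restriction of $H^{-1}$ distributions is continuous, why the full sequence converges) is accurate.

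As for comparison: the paper does \emph{not} actually prove Proposition~\ref{prop}. It is stated without proof and attributed to \cite{A} (Allaire's lecture notes), where the argument given is essentially the one you wrote down. So you have supplied the standard proof that the paper merely cites; there is no alternative route in the paper to contrast with.
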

Based on the above localization result, we present the following example.

Let $\Omega$ be a domain which is subdivided into domains $\Omega^z$, $z=1,2...,m$ with Lipschitz boundaries. Let $A^z(x,y)$ be periodic functions in $y$ variable with periods $Y^z$ for $z=1,2,...,m$. 
Let us define for any $\epsilon>0$, 
\[
\widetilde{A}^{\epsilon}(x) = A^z(x,\frac{x}{\epsilon}) \mbox{ if } x \in \Omega^z.
\]
Then 
\begin{equation}\label{ub11}
\mbox{$\widetilde{A}^\epsilon(x)$ $H$-converges to $\widetilde{A}^{*}(x)$ in $\Omega$,}
\end{equation}
where  
\[
\widetilde{A}^{*}(x) = A^{*,z}(x) \mbox{ if } x \in \Omega^z
\]
and the $A^{*,z}(x)=[a^{*,z}_{kl}(x)]$ is defined as in \eqref{ub10} over the periodic cell $Y^z$:
\[
a^{*,z}_{kl}(x) =\frac{1}{|Y^z|} \int_{Y^z}a^z_{ij}(x,y)\frac{\partial}{\partial y_i}(\chi^z_k(x,y) + y_k)\frac{\partial}{\partial y_j}(\chi^z_l(x,y) + y_l)dy, \quad z=1,..,m
\]
where  $\chi^z_k(\cdot,y)\in H^1(Y^z)$ solves the following cell problem for almost every $x$ in $\Omega^z$:
\[
-div_y\ A^z(x,y)(\nabla_y\chi^z_k(x,y)+e_k) = 0 \quad\mbox{in }\mathbb{R}^N,\quad y \rightarrow\chi^z_k(x,y) \quad\mbox{is $Y^z$-periodic. }
\]

\vspace{5mm}
Let us now turn our attention to the equation \eqref{ad1}. For each fixed $\epsilon>0$, we consider $A^{\epsilon}(x,t) \in \mathcal{M}(\alpha,\beta,L;\Omega \times \mathbb{R})$ where $\alpha, \beta, L$ are positive, finite and independent of $\epsilon$.

It is shown in Theorem \ref{appendix}, that, for all fixed $\epsilon > 0$, the weak form of \eqref{ad1} with $f\in H^{\frac{1}{2}}(\partial\Omega)$ has a unique solution $u^\epsilon\in H^1(\Omega)$ satisfying the estimate
\begin{equation}\label{ad3}
||u^\epsilon||_{H^1(\Omega)} \leq C||f||_{H^{\frac{1}{2}}(\partial\Omega)}
\end{equation}
where $C=C(N,\Omega,\alpha,\beta,L)$ is independent of $\epsilon$.
Thus, standard compactness arguments imply that up to a subsequence (still denoted by $\epsilon$)
\[
u^\epsilon \rightharpoonup u \mbox{ weakly in } H^1(\Omega).
\]
Our goal is to get the limiting equation for $u\in H^1(\Omega)$. We remain in the class of periodic microstructures and derive the homogenization result in the quasi-linear settings. 
\subsection{Periodic homogenization for quasi-linear PDEs}
Let $Y$ denote the unit cube $[0,1]^N$ in $\mathbb{R}^N$.
Let $A:\Omega\times Y\times \mathbb{R}\mapsto \mathbb{R}^{N\times N}$ and $A(x,y,t)=[a_{ij}(x,y,t)]\in\mathcal{M}(\alpha,\beta,L;\Omega\times Y\times\mathbb{R})$ be such that 
\[y\mapsto  a_{ij}(x,y,t), \mbox{ are $Y$-periodic functions
for almost every $(x, t)\in \Omega\times\mathbb{R}$ and $i,j =1,2..,N.$} 
\]
We now set
\[A^{\epsilon}(x,t) = [a_{ij}(x,\frac{x}{\epsilon},t)],\quad (x,t)\in\Omega\times\mathbb{R}
\]
and this is known as periodic micro-structures in quasi-linear settings.

We will show that in this case the homogenized conductivity \\$A^{*}(x,t)=[a^{*}_{ij}(x,t)]\in \mathcal{M}(\widetilde{\alpha},\widetilde{\beta},\widetilde{L};\Omega\times\mathbb{R})$ can be defined by its entries (see \cite{Malik,FM,MURAT})
\begin{equation}\label{ad5}
a^{*}_{kl}(x,t) = \int_{Y} a_{ij}(x,y,t)\frac{\partial}{\partial y_i}(\chi_k(x,y,t) + y_k)\frac{\partial}{\partial y_j}(\chi_l(x,y,t) + y_l)dy
\end{equation}
where $\chi_k(x,y,t)\in H^1_{\#}(Y)$ for almost every $(x,t)\in\Omega\times \mathbb{R}$ are the solutions of the so-called cell-problems:
For each canonical basis vector $e_k\in\mathbb{R}^N$, $\chi_k(x,y,t)$ satisfy  the following problem for $y \in Y$, where $Y$ is the periodic unit cell and for almost every $(x,t)\in\Omega\times\mathbb{R}$:
\begin{equation}\begin{aligned}\label{ad6}
&-div_y\ A(x,y,t)(\nabla_y\chi_k(x,y,t)+e_k) = 0 \quad\mbox{in }\mathbb{R}^N,\\
\quad y &\rightarrow\chi_k(x,y,t) \quad\mbox{is $Y$-periodic for all $(x,t)\in\Omega\times \mathbb{R}$. }
\end{aligned}\end{equation}
The above problem has a unique solution in $H^1_{\#}(Y)/\mathbb{R}$ where,
\[
H^1_{\#}(Y)=\{f\in H^1_{loc}(\mathbb{R}^N):\ y\mapsto f(y) \mbox{ is $Y$ periodic.}\}
\] 
We further assume that,
\[
 \int_Y \chi_k(x,y,t)dy =0 \quad\forall (x,t)\in\Omega\times\mathbb{R}
\]
in order to get unique solution $\chi(x,y,t)\in H^1_{\#,0}(Y)$ for almost every $(x,t)\in\Omega\times\mathbb{R}$, where $H^1_{\#,0}(Y)=\{ f\in H^1_{\#}(Y): \int_Y f=0\}$.

Note that, from \eqref{ad5}
it follows that $a^{*}_{kl} =a^{*}_{lk}$ for $k,l=1,..,N$ and there exist $0<\widetilde{\alpha}<\widetilde{\beta}<\infty$ such that, $A^{*}(x,t)\in\mathcal{M}(\widetilde{\alpha},\widetilde{\beta},\widetilde{L};\Omega\times \mathbb{R})$. We will in fact show that $t \mapsto A^{*}(x,t)$ is uniformly Lipschitz in $\Omega$.

Before proving the homogenization result, we first discuss few properties of the expected homogenized matrix $A^{*}(x,t)$ defined in \eqref{ad5}.

\begin{lemma}\label{lemma4.1}
Let $A(x,y,t)\in\mathcal{M}(\alpha,\beta,L;\Omega\times Y\times\mathbb{R})$ be such that
\[y \mapsto A(x,y,t)=[a_{ij}(x,y,t)] \quad\mbox{is $Y$ periodic for almost every $(x,t)\in\Omega\times\mathbb{R}$}\] 
and $t\mapsto A(x,y,t)$ is uniformly Lipschitz for almost every $(x,y)\in\Omega\times Y$, i.e. 
\begin{equation}\label{ub8}
|a_{ij}(x,y,t_1)- a_{ij}(x,y,t_2)| \leq L|t_1-t_2| \quad i,j=1,..,N.
\end{equation}
Then the unique solution $\chi_k(x,\cdot,t)\in H^1_{\#,0}(Y)$, $k=1,...,N$ to \eqref{ad6} is such that  $t\mapsto \chi_k(x,y,t)$ is uniformly Lipschitz for almost every $(x,y)\in\Omega\times Y$ i.e.
\begin{equation}\label{53}
||\chi_k(x,\cdot,t_1) - \chi_k(x,\cdot,t_2)||_{H^1_{\#,0}(Y)} \leq C_{L} |t_1 - t_2|, \quad k=1,...,N
\end{equation}
and this holds for any $t_1, t_2 \in \mathbb{R}$, where $C_{L}$ is a constant independent of $x,y,t_1,t_2$.
\end{lemma}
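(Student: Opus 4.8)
The plan is to fix $x$ (for a.e.\ $x\in\Omega$) and $t_1,t_2\in\mathbb{R}$, write the cell problems \eqref{ad6} for $\chi_k(x,\cdot,t_1)$ and $\chi_k(x,\cdot,t_2)$, subtract them, and run a standard energy estimate on the difference, treating the $t$-dependence of $A$ as a perturbation controlled by \eqref{ub8}. Concretely, set $w := \chi_k(x,\cdot,t_1) - \chi_k(x,\cdot,t_2)\in H^1_{\#,0}(Y)$. Since $\chi_k(x,\cdot,t_j)$ solves $-\mathrm{div}_y\,A(x,y,t_j)(\nabla_y\chi_k(x,y,t_j)+e_k)=0$ weakly against $Y$-periodic test functions, testing the difference of the two equations with $w$ itself gives
\begin{align*}
\int_Y A(x,y,t_1)\nabla_y w\cdot\nabla_y w\,dy
= -\int_Y \bigl(A(x,y,t_1)-A(x,y,t_2)\bigr)\bigl(\nabla_y\chi_k(x,y,t_2)+e_k\bigr)\cdot\nabla_y w\,dy.
\end{align*}
The left side is bounded below by $\alpha\|\nabla_y w\|_{L^2(Y)}^2$ by the ellipticity in condition (2) of $\mathcal{M}(\alpha,\beta,L;\cdot)$. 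The right side is bounded, using \eqref{ub8} (which gives $|A(x,y,t_1)-A(x,y,t_2)|\le CL|t_1-t_2|$ entrywise, hence in operator norm) and Cauchy--Schwarz, by $CL|t_1-t_2|\,\|\nabla_y\chi_k(x,y,t_2)+e_k\|_{L^2(Y)}\,\|\nabla_y w\|_{L^2(Y)}$.

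Next I would absorb one factor of $\|\nabla_y w\|_{L^2(Y)}$ to obtain $\|\nabla_y w\|_{L^2(Y)}\le \frac{CL}{\alpha}|t_1-t_2|\,\|\nabla_y\chi_k(x,y,t_2)+e_k\|_{L^2(Y)}$. It remains to bound $\|\nabla_y\chi_k(x,\cdot,t_2)+e_k\|_{L^2(Y)}$ by a constant independent of $x$ and $t_2$: this is the a priori estimate for the cell problem itself, obtained by testing \eqref{ad6} at $t_2$ with $\chi_k(x,\cdot,t_2)$, using ellipticity and boundedness $|A|\le\beta$, which yields $\|\nabla_y\chi_k(x,\cdot,t_2)\|_{L^2(Y)}\le \frac{\beta}{\alpha}|Y|^{1/2}$ and hence $\|\nabla_y\chi_k(x,\cdot,t_2)+e_k\|_{L^2(Y)}\le C(\alpha,\beta,N)$. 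Combining, $\|\nabla_y w\|_{L^2(Y)}\le C_L|t_1-t_2|$ with $C_L$ depending only on $\alpha,\beta,L,N$. Finally, since $w\in H^1_{\#,0}(Y)$ has zero mean over $Y$, the Poincar\'e--Wirtinger inequality on the torus upgrades the gradient bound to the full $H^1_{\#,0}(Y)$ norm bound \eqref{53}, with a constant still independent of $x,y,t_1,t_2$.

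I do not expect a serious obstacle here; the argument is the classical Lipschitz-in-parameter estimate for solutions of linear elliptic equations with Lipschitz parameter dependence, adapted to the periodic cell. The only points requiring mild care are: (i) justifying that the weak formulations of the two cell problems may be subtracted and tested against $w$, which is legitimate because $w$ is itself an admissible $Y$-periodic test function with zero mean; and (ii) making sure all constants are genuinely uniform in $x$ and $t$ — this is automatic since $\alpha,\beta,L$ in the definition of $\mathcal{M}(\alpha,\beta,L;\Omega\times Y\times\mathbb{R})$ are uniform and the Poincar\'e constant on $Y$ is purely geometric. A remark worth including afterward is that the same computation, now differentiating the cell problem formally in $t$, shows $t\mapsto\chi_k(x,\cdot,t)$ is in fact differentiable wherever $t\mapsto A(x,y,t)$ is, but only the Lipschitz bound \eqref{53} is needed for the homogenization argument that follows.
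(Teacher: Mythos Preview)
Your proposal is correct and follows essentially the same approach as the paper: subtract the two cell problems, test with the difference $w=\chi_k(x,\cdot,t_1)-\chi_k(x,\cdot,t_2)$, use coercivity on the left and the Lipschitz bound \eqref{ub8} together with the uniform a priori estimate $\|\chi_k(x,\cdot,t)\|_{H^1_{\#,0}(Y)}\le C$ on the right. The paper states the a priori bound first and then derives the identity, whereas you derive the identity first and then invoke the a priori bound, but the arguments are otherwise identical.
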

\begin{proof}
By using the fact $A(x,y,t)\in \mathcal{M}(\alpha,\beta,L;\Omega\times Y\times \mathbb{R})$ it can be easily seen that, 
\begin{equation}\label{ub7}
||\chi_k(x,y,t)||_{H^1_{\#,0}}\leq C   \quad\mbox{almost every }(x,t)\in\Omega\times\mathbb{R}, \ \ k=1,..,N
\end{equation}
where $C$ is independent of $x,y,t,k$.
Let $\chi_k(x,y,t_1)\in H^1_{\#,0}(Y)$ and $\chi_k(x,y,t_2)\in H^1_{\#,0}(Y)$ be solutions to \eqref{ad6} for two pairs of points $(x,t_1)$ and $(x,t_2)$ in $\Omega\times \mathbb{R}$, then after simple operations we have the relations
\begin{align*}
&\int_Y A(x,y,t_1) \nabla\left(\chi_k(x,y,t_1)-\chi_k(x,y,t_2)\right)\cdot\nabla\left(\chi_k(x,y,t_1)-\chi_k(x,y,t_2)\right)dy \notag\\
&=\int_Y \left(A(x,y,t_2) - A(x,y,t_1)\right) \left(\nabla\chi_k(x,y,t_2)+ e_k\right)\cdot \nabla\left(\chi_k(x,y,t_1)-\chi_k(x,y,t_2)\right)dy.\end{align*}
Now by using the coercivity of $A$ and together with \eqref{ub7}, the Lipschitz criteria \eqref{ub8} we have 
\begin{align}\label{58}
&\alpha ||\chi_k(x,\cdot,t_1) - \chi_k(x,\cdot,t_2)||^2_{H^1_{\#,0}(Y)}  \notag \\
&\leq\int_Y A(x,y,t_1) \nabla\left(\chi_k(x,y,t_1)-\chi_k(x,y,t_2)\right)\cdot\nabla\left(\chi_k(x,y,t_1)-\chi_k(x,y,t_2)\right)dy \notag\\
&=\int_Y \left(A(x,y,t_2) - A(x,y,t_1)\right) \left(\nabla\chi_k(x,y,t_2)+ e_k\right)\cdot \nabla\left(\chi_k(x,y,t_1)-\chi_k(x,y,t_2)\right)dy\notag\\
& \leq \left( \sum_{i,j=1}^n ||a_{ij}(x,y,t_1) - a_{ij}(x,y,t_2)||_{L^{\infty}(Y)} \right) \left(||\chi_k(x,\cdot,t_2)||_{H^{1}_{\#,0}(Y)} + \sqrt{|Y|} \right)\notag\\
&\qquad\qquad\qquad\qquad\qquad\qquad\qquad\qquad\qquad\cdot ||\chi_{k}(x,\cdot,t_1) - \chi_k(x,\cdot,t_2)||_{H^1_{0,\#}(Y)} \quad k=1,2...,n
\end{align}
 \eqref{53} thus follows from \eqref{58}.
\end{proof}
Recalling the definition of $A^{*}=[a^{*}_{ij}(x,t)]$, (see \eqref{ad5}), then following  Lemma \ref{lemma4.1} we have the following result.  
\begin{lemma}\label{lemma4.2}
Let the assumptions of Lemma \ref{lemma4.1} be satisfied. Then $t\mapsto A^{*}(x,t)$ is uniformly Lipschitz, i.e.
\begin{equation}\label{kaha}
|a_{ij}^{*}(x,t_1) - a_{ij}^{*}(x,t_2) | \leq \widetilde{L}|t_1 - t_2|, \quad i,j=1,2,...,n
\end{equation}
hold for almost every $x\in\Omega$ and $t_1,t_2 \in \mathbb{R}$  where $\widetilde{L}$ is independent of $x,t_1,t_2$. 
\end{lemma}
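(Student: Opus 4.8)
The plan is to estimate $a^{*}_{kl}(x,t_1) - a^{*}_{kl}(x,t_2)$ directly from the defining formula \eqref{ad5} by an add-and-subtract (telescoping) argument, reducing everything to the three ingredients already in hand: the Lipschitz bound \eqref{ub8} on the entries $a_{ij}$, the uniform bound \eqref{ub7} on $\|\chi_k(x,\cdot,t)\|_{H^1_{\#,0}(Y)}$, and the Lipschitz bound \eqref{53} on $t\mapsto\chi_k(x,\cdot,t)$ provided by Lemma \ref{lemma4.1}.

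First I would fix $x$ and $k,l$, write $\Phi_k^{(i)}(t) = \frac{\partial}{\partial y_i}\bigl(\chi_k(x,y,t) + y_k\bigr)$, and expand (with summation over $i,j$ understood)
\[
a^{*}_{kl}(x,t_1) - a^{*}_{kl}(x,t_2) = \int_Y \Bigl[ a_{ij}(x,y,t_1)\Phi_k^{(i)}(t_1)\Phi_l^{(j)}(t_1) - a_{ij}(x,y,t_2)\Phi_k^{(i)}(t_2)\Phi_l^{(j)}(t_2)\Bigr]\,dy .
\]
I would then split the integrand into three pieces by inserting intermediate terms: $\bigl(a_{ij}(x,y,t_1)-a_{ij}(x,y,t_2)\bigr)\Phi_k^{(i)}(t_1)\Phi_l^{(j)}(t_1)$, then $a_{ij}(x,y,t_2)\bigl(\Phi_k^{(i)}(t_1)-\Phi_k^{(i)}(t_2)\bigr)\Phi_l^{(j)}(t_1)$, and finally $a_{ij}(x,y,t_2)\Phi_k^{(i)}(t_2)\bigl(\Phi_l^{(j)}(t_1)-\Phi_l^{(j)}(t_2)\bigr)$. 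The key observation is that the ``$+y_k$'' contributions are independent of $t$, so $\Phi_k^{(i)}(t_1)-\Phi_k^{(i)}(t_2) = \frac{\partial}{\partial y_i}\bigl(\chi_k(x,y,t_1)-\chi_k(x,y,t_2)\bigr)$, whose $L^2(Y)$ norm is precisely what \eqref{53} controls.

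Next, to each of the three integrals I would apply Cauchy--Schwarz in $L^2(Y)$. In the first piece, $|a_{ij}(x,y,t_1)-a_{ij}(x,y,t_2)| \leq L|t_1-t_2|$ pointwise by \eqref{ub8}, while $\|\Phi_k^{(i)}(t_1)\|_{L^2(Y)}$ and $\|\Phi_l^{(j)}(t_1)\|_{L^2(Y)}$ are uniformly bounded by \eqref{ub7} together with $\|\nabla y_k\|_{L^2(Y)} = |Y|^{1/2}$. In the second and third pieces, $\|a_{ij}(x,y,t_2)\|_{L^\infty(Y)} \leq \beta$ (entrywise sup bound), one $\Phi$-factor is again uniformly bounded as above, and the remaining $\Phi$-difference factor is bounded by $C_L|t_1-t_2|$ via \eqref{53}. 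Summing the finitely many indices $i,j$ and collecting constants yields \eqref{kaha} with $\widetilde L = \widetilde L(N,\alpha,\beta,L,|Y|)$ independent of $x$, $t_1$, $t_2$. The symmetry $a^{*}_{kl} = a^{*}_{lk}$ is immediate from \eqref{ad5}, and the uniform ellipticity constants $\widetilde\alpha,\widetilde\beta$ depending only on $\alpha,\beta$ follow from the standard energy-minimization characterization of the homogenized matrix, so that $A^{*}(x,t)\in\mathcal{M}(\widetilde\alpha,\widetilde\beta,\widetilde L;\Omega\times\mathbb{R})$.

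There is no serious obstacle here: the argument is a routine trilinear perturbation estimate. The only point requiring care is bookkeeping — keeping track of the ``$+y_k$'' terms so that only genuine $\chi$-differences appear in the pieces controlled by \eqref{53}, and verifying that every constant entering the estimate (the $L^2$-norms of the $\Phi$'s, the sup of the $a_{ij}$'s, and the Lipschitz constants $L$ and $C_L$) is genuinely independent of $x$, which holds because \eqref{ub7}, \eqref{ub8} and \eqref{53} are all uniform in $x$.
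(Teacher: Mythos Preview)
Your argument is correct and follows essentially the same strategy as the paper's proof: telescope the difference $a^{*}_{kl}(x,t_1)-a^{*}_{kl}(x,t_2)$ and control each piece using the Lipschitz bound \eqref{ub8} on $a_{ij}$, the uniform bound \eqref{ub7} on $\chi_k$, and the Lipschitz bound \eqref{53} on $\chi_k$ from Lemma~\ref{lemma4.1}.

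The only difference is cosmetic. The paper first uses the cell equation \eqref{ad6} (tested against $\chi_l$) to simplify \eqref{ad5} to the bilinear form
\[
a^{*}_{kl}(x,t)=\Big\langle a_{il}(x,y,t)\,\frac{\partial\chi_k}{\partial y_i}(x,y,t)\Big\rangle+\big\langle a_{kl}(x,y,t)\big\rangle,
\]
so that only a two-term telescoping is needed. You instead work directly with the trilinear expression in \eqref{ad5} and do a three-term telescoping; this avoids invoking the cell equation at the cost of one extra (equally routine) term to estimate. Both routes lead to the same conclusion with constants depending only on $N,\alpha,\beta,L$.
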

\begin{proof}
Let us write \eqref{ad5} as 
\[
a^{*}_{kl}(x,t) = \langle a_{il}(x,y,t)\frac{\partial}{\partial y_i}\chi_k(x,y,t)\rangle + \langle a_{kl}(x,y,t)\rangle
\]
where $\langle \rangle$ denotes the average over the periodic cell $Y$.

Let us analyze the first term in the above  formula. We write
\begin{align*}
&| \left \langle a_{il}(x,\cdot,t_1) \frac{\partial \chi_k}{\partial y_i}(x,\cdot,t_1) - a_{il}(x,\cdot,t_2) \frac{\partial \chi_k}{\partial y_i}(x,\cdot,t_2) \right \rangle |  \notag \\
&\leq| \left \langle a_{il}(x,\cdot,t_2) \left(\frac{\partial \chi_k(x,\cdot,t_1)}{\partial y_i} - \frac{\partial \chi_k(x,\cdot,t_2)}{\partial y_i} \right ) \right \rangle | +\notag \\
&| \left \langle \left( a_{il}(x,\cdot,t_1) - a_{il}(x,\cdot,t_2)\right) \frac{\partial \chi_k(x,\cdot,t_1)}{\partial y_i}  \right \rangle |.
\end{align*}
Then by using the fact that $t\mapsto a_{il}(x,y,t)$ and $t\mapsto \chi_k(x,y,t)$ are uniformly Lipschitz functions, it follows that $t\mapsto a^{*}_{il}(x,y,t)$ is uniformly Lipschitz for almost every $(x,y)\in \Omega\times Y$. Hence we have \eqref{kaha}.
\end{proof}
Next we present the local characterization in the quasi-linear settings analogous to local case in the linear setting mentioned in Proposition \ref{prop}.
\begin{lemma}\label{lemma4.3}
Let $A^\epsilon(x,t)$ governed by the periodic microstructures $H$-converge to $A^{*}(x,t)$ in $\Omega\times\mathbb{R}$. Let $\omega$ be an open subset of $\Omega$ and assume that $A^\epsilon|_{\omega\times\mathbb{R}}$ (restrictions of $A^\epsilon$ to $\omega\times\mathbb{R}$) are independent of $t$ i.e
\[
A^\epsilon(x,t) = A^\epsilon(x) \quad\mbox{whenever }(x,t)\in\omega\times\mathbb{R}.
\]
Then the homogenized limit $A^{*}(x,t)|_{\omega\times\mathbb{R}}$ is also independent of $t$, i.e.
\[
A^{*}(x,t) = A^{*}(x) \quad\mbox{whenever, }(x,t)\in\omega\times\mathbb{R}.
\]
\end{lemma}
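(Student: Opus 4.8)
The plan is to argue purely from the explicit representation \eqref{ad5} of $A^{*}$ and the cell problems \eqref{ad6}, using the fact that, at a frozen macroscopic point $x$, the cell problem only depends on the periodic generator $A(x,\cdot,\cdot)$ at that single $x$. \textbf{Step 1: reduction to the generator.} Write $A(x,y,t)=[a_{ij}(x,y,t)]$ for the periodic generator so that $A^{\epsilon}(x,t)=[a_{ij}(x,x/\epsilon,t)]$. Since $A$ is merely Carath\'eodory (periodic in $y$, Lipschitz in $t$), I would first promote the hypothesis ``$A^{\epsilon}|_{\omega\times\mathbb{R}}$ is independent of $t$'' to the statement $A(x,y,t)=A(x,y)$ for a.e.\ $(x,y)\in\omega\times Y$ and all $t$. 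To do this, fix $t_{1},t_{2}\in\mathbb{Q}$: the hypothesis gives $a_{ij}(x,x/\epsilon,t_{1})=a_{ij}(x,x/\epsilon,t_{2})$ for a.e.\ $x\in\omega$ and every small $\epsilon$, and letting $\epsilon\to 0$ in $\int_{\omega}|a_{ij}(x,x/\epsilon,t_{1})-a_{ij}(x,x/\epsilon,t_{2})|\,dx$ via the periodic averaging lemma $\int_{\omega}g(x,x/\epsilon)\,dx\to\int_{\omega}\int_{Y}g(x,y)\,dy\,dx$ forces $a_{ij}(\cdot,\cdot,t_{1})=a_{ij}(\cdot,\cdot,t_{2})$ a.e.\ on $\omega\times Y$. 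As this holds for all rational pairs and $t\mapsto a_{ij}$ is Lipschitz, the claimed $t$-independence of the generator on $\omega\times Y\times\mathbb{R}$ follows.

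\textbf{Step 2: the cell correctors lose $t$.} For a.e.\ $x\in\omega$ consider the cell problems \eqref{ad6}. Their coefficient matrix $A(x,y,t)=A(x,y)$ and their data $e_{k}$ carry no $t$-dependence, and \eqref{ad6} is uniquely solvable in $H^{1}_{\#,0}(Y)$; hence $\chi_{k}(x,\cdot,t)=\chi_{k}(x,\cdot)$ is independent of $t$ for $k=1,\dots,N$. One may instead quote the quantitative estimate behind Lemma \ref{lemma4.1}, whose proof bounds $\|\chi_{k}(x,\cdot,t_{1})-\chi_{k}(x,\cdot,t_{2})\|_{H^{1}_{\#,0}(Y)}$ by a constant times $\sum_{i,j}\|a_{ij}(x,\cdot,t_{1})-a_{ij}(x,\cdot,t_{2})\|_{L^{\infty}(Y)}$, which vanishes for $x\in\omega$.

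\textbf{Step 3: conclusion.} Substituting $a_{ij}(x,y,t)=a_{ij}(x,y)$ and $\chi_{k}(x,y,t)=\chi_{k}(x,y)$ into \eqref{ad5}, the integrand
\[
a_{ij}(x,y)\,\frac{\partial}{\partial y_{i}}\bigl(\chi_{k}(x,y)+y_{k}\bigr)\,\frac{\partial}{\partial y_{j}}\bigl(\chi_{l}(x,y)+y_{l}\bigr)
\]
has no $t$-dependence, so $a^{*}_{kl}(x,t)=a^{*}_{kl}(x)$ for a.e.\ $x\in\omega$ and all $t\in\mathbb{R}$; that is, $A^{*}(x,t)|_{\omega\times\mathbb{R}}=A^{*}(x)$, which proves the lemma. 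This is the quasi-linear analogue of the linear localization principle recorded in Proposition \ref{prop}: the formula \eqref{ad5} collapses to the linear cell-average formula once the microstructure is $t$-frozen on $\omega$.

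\textbf{Main obstacle.} Steps 2 and 3 are essentially bookkeeping once one reads off $t$-independence of the correctors from uniqueness. The only genuinely delicate point is Step 1 --- converting a hypothesis stated for every $A^{\epsilon}$ into one about the generator $A$ --- which requires the periodic averaging lemma for merely Carath\'eodory coefficients and care with the ``almost every $x$'' qualifiers inherent to the locally periodic framework; if one postulates the hypothesis directly on the generator, the lemma reduces to the short computation in Steps 2--3.
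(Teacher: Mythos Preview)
Your proof is correct and follows essentially the same route as the paper: the paper's one-sentence argument invokes the locality of the cell problem \eqref{ad6} to conclude that $\chi_k(x,y,t)=\chi_k(x,y)$ on $\omega\times Y\times\mathbb{R}$, and then reads off $t$-independence of $A^{*}$ from the explicit formula \eqref{ad5}---exactly your Steps 2 and 3. Your Step 1 (promoting the hypothesis on $A^{\epsilon}$ to $t$-independence of the generator $A(x,y,t)$ on $\omega\times Y$) is a technical point that the paper's proof tacitly assumes without comment; your treatment via the periodic averaging lemma makes this honest, and as you note, the argument becomes trivial if one states the hypothesis directly at the level of the generator.
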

\begin{proof}
The proof is straightforward from the locality of the second order PDE \eqref{ad6} satisfied by $\chi_k(x,y,t)$; as under the assumption $\chi_k(x,y,t) =\chi_k(x,y)$ whenever $(x,y,t)\in\omega\times Y\times\mathbb{R}$ and by using \eqref{ad5} we can conclude that $A^{*}(x,t) = A^{*}(x)$ whenever $(x,t)\in\omega\times\mathbb{R}$.
\end{proof}

Now we are going to prove  homogenization result for the quasi-linear PDE with inhomogeneous boundary conditions for locally  periodic microstructures. Similar result for example, with homogeneous boundary condition and globally periodic microstructures can be found in \cite{Malik,FM}. We first choose $\Omega$ to be a smooth enough domain and later relax the assumptions and let $\Omega$ be a Lipschitz domain.

\begin{theorem}\label{theorem4.1}
Let $\Omega$ be a bounded domain in $\mathbb{R}^N$ with $C^{2,\gamma}$ boundary where $0 < \gamma <1$.  Let the matrix $A(x,y,t)$ satisfy:
\begin{enumerate}
\item $A(x,y,t)\in\mathcal{M}(\alpha,\beta,L;\Omega\times Y\times\mathbb{R})$, where $\alpha,\beta,L$ are independent of $x,y,t$.
\item $(x,y,t) \mapsto A(x,y,t)=[a_{ij}(x,y,t)] \quad\mbox{is }C^{1,\gamma}(\overline{\Omega}\times Y\times\mathbb{R})$.
\end{enumerate}
Let us consider the sequence of matrices $\{A^\epsilon(x,t)\}_{\epsilon>0}$ for $(x,t)\in\Omega\times\mathbb{R}$ given by 
\[ A^\epsilon(x,t) := A(x,\frac{x}{\epsilon},t)\] and consider the following inhomogeneous quasi-linear PDEs with $f\in C^{2,\gamma}(\overline{\Omega})$:
\begin{equation}\label{ub3}
\begin{aligned}
-div\left( A^\epsilon(x,u^\epsilon)\nabla u^\epsilon(x)\right) &= 0 \mbox{ in }\Omega\\
u^\epsilon &= f \mbox{ on }\partial\Omega.
\end{aligned}
\end{equation}     
Then upto a subsequence the corresponding solutions $\{u^\epsilon\}_{\epsilon}\in H^1(\Omega)$ of \eqref{ub3} satisfy
\begin{align*}
u^\epsilon &\rightharpoonup u \quad\mbox{weakly in }H^1(\Omega)\\ 
A^\epsilon(x,u^\epsilon)\nabla u^\epsilon &\rightharpoonup A^{*}(x,u)\nabla u\quad\mbox{weakly in }L^2(\Omega)^N;
\end{align*}
where $u\in H^1(\Omega)$ is the unique solution of the so-called homogenized problem
\begin{equation*}\begin{aligned}
-div\left( A^{*}(x,u(x))\nabla u(x)\right) &= 0 \mbox{ in }\Omega\\
u &= f \mbox{ on }\partial\Omega.
\end{aligned}\end{equation*}
We say $A^\epsilon(x,t) \xrightarrow{H} A^{*}(x,t)$ in $\Omega\times\mathbb{R}$, where the homogenized matrix $A^{*}(x,t)$ is defined as in \eqref{ad5}. 
\end{theorem}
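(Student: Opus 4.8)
The plan is to prove the theorem by \emph{freezing the nonlinearity}: we replace the solution-dependent coefficient $A^\epsilon(x,u^\epsilon)$ by the matrix $A^\epsilon(x,u)$ obtained by inserting the (as yet unknown) weak limit $u$, thereby reducing matters to a \emph{linear} locally periodic $H$-convergence problem, and then identify the homogenized coefficient of the frozen problem with $A^{*}(x,u(x))$. The two ingredients that make this work are the uniform (in $\epsilon$) a priori bounds \eqref{ad3} together with the uniform higher regularity estimate recorded in Appendix A, and the already established structure of $A^{*}$ (Lemma \ref{lemma4.2} and the discussion around \eqref{ad5}).

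\textbf{Step 1 (compactness and freezing).} From \eqref{ad3} we extract a subsequence with $u^\epsilon \rightharpoonup u$ weakly in $H^1(\Omega)$; by the uniform $C^{0,\gamma}(\overline\Omega)$ bound on $u^\epsilon$ from Appendix A and Arzel\`a--Ascoli, after a further extraction $u^\epsilon \to u$ uniformly on $\overline\Omega$, so $u\in C^{0,\gamma}(\overline\Omega)$ and $u=f$ on $\partial\Omega$. Since $|A^\epsilon(x,u^\epsilon)|\le\beta$, the fluxes $\xi^\epsilon := A^\epsilon(x,u^\epsilon)\nabla u^\epsilon$ are bounded in $L^2(\Omega)^N$; extract $\xi^\epsilon \rightharpoonup \xi$ in $L^2$, and from $-\operatorname{div}\xi^\epsilon = 0$ deduce $\operatorname{div}\xi = 0$ in $H^{-1}(\Omega)$. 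The whole point is to show $\xi = A^{*}(x,u)\nabla u$. To that end set $\widetilde\xi^\epsilon := A\bigl(x,\tfrac{x}{\epsilon},u(x)\bigr)\nabla u^\epsilon$. The Lipschitz condition (3) in the definition of $\mathcal M(\alpha,\beta,L;\cdot)$ together with the uniform convergence $u^\epsilon\to u$ gives
\[
\|\xi^\epsilon - \widetilde\xi^\epsilon\|_{L^2(\Omega)} \;\le\; L\,\|u^\epsilon - u\|_{L^\infty(\Omega)}\,\|\nabla u^\epsilon\|_{L^2(\Omega)} \;\longrightarrow\; 0 ,
\]
so $\widetilde\xi^\epsilon \rightharpoonup \xi$ as well, and $-\operatorname{div}\widetilde\xi^\epsilon = -\operatorname{div}(\widetilde\xi^\epsilon-\xi^\epsilon)\to 0$ strongly in $H^{-1}(\Omega)$.

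\textbf{Step 2 (linear locally periodic homogenization and conclusion).} The matrix $B^\epsilon(x):=A\bigl(x,\tfrac{x}{\epsilon},u(x)\bigr)$ is of locally periodic type: $y\mapsto A(x,y,u(x))$ is $Y$-periodic, $B^\epsilon\in\mathcal M(\alpha,\beta;\Omega)$, and $x\mapsto A(x,y,u(x))$ is continuous uniformly in $y$ because $A$ is $C^{1,\gamma}$ (hypothesis (2)) and $u$ is continuous. Hence, by the classical locally periodic $H$-convergence result recalled in \eqref{ub10} (see \cite{BLP,OL}), $B^\epsilon\xrightarrow{H}B^{*}$, where $B^{*}(x)$ is given by \eqref{ub10} with cell correctors solving the cell problem for $A(x,\cdot,u(x))$; but that is precisely \eqref{ad6} evaluated at $t=u(x)$, so by uniqueness the corrector is $\chi_k(x,y,u(x))$ and therefore $B^{*}(x)=A^{*}(x,u(x))$ by \eqref{ad5}. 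Now $u^\epsilon$ is an admissible test sequence for $B^\epsilon$ (it converges weakly in $H^1$ and, by Step 1, $-\operatorname{div}(B^\epsilon\nabla u^\epsilon)=-\operatorname{div}\widetilde\xi^\epsilon$ converges strongly in $H^{-1}$), so the definition of $H$-convergence yields $B^\epsilon\nabla u^\epsilon=\widetilde\xi^\epsilon\rightharpoonup B^{*}\nabla u=A^{*}(x,u(x))\nabla u$, i.e. $\xi=A^{*}(x,u)\nabla u$. Combining this with $\operatorname{div}\xi=0$ and $u=f$ on $\partial\Omega$, $u$ is a weak $H^1$ solution of $-\operatorname{div}(A^{*}(x,u)\nabla u)=0$, $u=f$ on $\partial\Omega$. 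Since $A^{*}\in\mathcal M(\widetilde\alpha,\widetilde\beta,\widetilde L;\Omega\times\mathbb R)$ by Lemma \ref{lemma4.2}, Theorem \ref{appendix} makes this solution unique; hence $u$ and $A^{*}(x,u)\nabla u$ are independent of the extracted subsequence, and the stated convergences $u^\epsilon\rightharpoonup u$ in $H^1(\Omega)$ and $A^\epsilon(x,u^\epsilon)\nabla u^\epsilon\rightharpoonup A^{*}(x,u)\nabla u$ in $L^2(\Omega)^N$ hold for the whole family.

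\textbf{Main obstacle and an alternative.} The delicate point is the freezing step: replacing $u^\epsilon$ by $u$ inside the coefficient is legitimate only because of the \emph{strong} (uniform, not merely $L^2$) convergence of $u^\epsilon$, which is exactly why the uniform a priori regularity from Appendix A and hypothesis (2) are needed; one must also check that the frozen coefficient $A(x,y,u(x))$ — now carrying the merely-continuous function $u$ in its slow variable — still belongs to the class for which \eqref{ub10} applies, and that its corrector coincides with $\chi_k(x,y,u(x))$ from \eqref{ad6}. If one prefers not to invoke the linear locally periodic theorem as a black box, Step 2 can instead be carried out directly by Tartar's oscillating test function method, testing \eqref{ub3} against $w_k^\epsilon(x)=x_k+\epsilon\,\chi_k\bigl(x,\tfrac{x}{\epsilon},u(x)\bigr)$; this route requires pointwise control of $\nabla_x\chi_k$ and $\partial_t\chi_k$, which follows from Schauder estimates for \eqref{ad6} under hypothesis (2), from Lemma \ref{lemma4.2}, and from $\nabla u\in L^2(\Omega)$, together with the div--curl lemma to pass to the limit in the products.
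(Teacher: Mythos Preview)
Your proof is correct and shares the same starting ingredient as the paper's — the uniform $C^{0,\gamma}$ a~priori estimate from Appendix~A, Arzel\`a--Ascoli, and hence uniform convergence $u^\epsilon\to u$ on $\overline\Omega$ — but the two arguments diverge after that point.

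The paper does \emph{not} freeze the coefficient at the limit $u$. Instead it introduces an intermediate piecewise-constant approximation $u^\delta$ of $u$ (and of all $u^\epsilon$ simultaneously, for $\epsilon$ small) on a partition $\Omega=\bigcup_z\Omega^z$, and compares three auxiliary problems: the original quasilinear one, the linear problem with coefficient $A^\epsilon(x,u^\delta)$ and solution $u^{\delta,\epsilon}$, and the linear problem with coefficient $A^{*}(x,u)$ and solution $w$. Energy estimates give $\|u^\epsilon-u^{\delta,\epsilon}\|_{H^1_0}\le C\delta$ and $\|u^{\delta,*}-w\|_{H^1_0}\le C\delta$, while on each $\Omega^z$ the coefficient $A^\epsilon(x,u^\delta)$ is \emph{globally periodic} (constant $t$-slot), so the most elementary form of periodic $H$-convergence plus the localization principle (Proposition~\ref{prop}) yield $u^{\delta,\epsilon}\rightharpoonup u^{\delta,*}$. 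A triangle-inequality argument in $H^1$ and then in $L^2$ for the fluxes closes the proof. Your route is shorter and avoids all the $\delta$-machinery and auxiliary PDEs by freezing directly at $u$ and invoking the linear \emph{locally periodic} result \eqref{ub10} for $B^\epsilon(x)=A(x,x/\epsilon,u(x))$, then using $u^\epsilon$ itself as an admissible test sequence in the Murat--Tartar definition of $H$-convergence. The price you pay is that you must apply \eqref{ub10} to a coefficient whose slow-variable dependence is only H\"older continuous (through $u$); this is fine under the standard hypotheses in \cite{BLP,OL}, but is a stronger black box than the paper uses. The paper's detour through $u^\delta$ is precisely designed so that only the constant-$t$ (hence purely periodic) case of $H$-convergence is ever invoked; it is more elementary in that sense, at the cost of extra bookkeeping.
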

\begin{proof}
For $\epsilon>0$ fixed, let us consider $f\in C^{2,\gamma}(\overline{\Omega})$, then the problem \eqref{ub3} has a unique solution $u^\epsilon\in C^{2,\gamma}(\overline{\Omega})$ satisfying (see Lemma \ref{ub4}), for $x,y\in\overline{\Omega}$
\[
 |u^\epsilon(x) -u^\epsilon(y)|\leq C|x-y|^{\lambda}
\]
where $C=C(N,\Omega,\alpha,\beta,f)$ and $\lambda=\lambda(N,\Omega,\alpha,\beta,\gamma)$ are independent of $\epsilon$. Thus it follows that the sequence $\{u^\epsilon\}_\epsilon$  satisfies the assumption of the
Arzel\'{a}-–Ascoli theorem. Hence there exists a subsequence still denoted by $\{u^\epsilon\}_\epsilon$ that converges strongly to $u$ in $C(\overline{\Omega})$. We also know that subsequence $\{u^\epsilon\}_\epsilon$ satisfies (see \eqref{ad3})
\[||u^\epsilon||_{H^1(\Omega)}\leq C||f|_{\partial \Omega}||_{H^{\frac{1}{2}}(\partial\Omega)}\] 
where $C$ is independent of $\epsilon$. Thus it has a weakly convergent subsequence still denoted as  $\{u^\epsilon\}_\epsilon$ in $H^1(\Omega)$ and the subsequential limit is same as $u\in H^1(\Omega)\cap C(\overline{\Omega})$.

Let $\delta >0$ be arbitrary, then the domain $\Omega$ can be divided in to sub-domains 
$\Omega^z$, $z=1,2...,m$ with Lipschitz boundaries and there exists a function $u^\delta$ constant on every sub-domain $\Omega^z$ such that for sufficiently small $\epsilon$, we have
\begin{equation}\label{616}
|u^{\epsilon}(x) - u^\delta(x)| \leq \delta \quad x \in \Omega
\end{equation}
 This immediately implies 
\begin{equation}\label{ad2}
|u(x) - u^{\delta}(x)| \leq \delta \quad x \in \Omega
\end{equation}
Let $u^{\delta,\epsilon} \in H^1(\Omega)$ be a sequence of solutions to the linear PDEs 
\begin{equation}\begin{aligned}\label{ad13}
-div\left( A^\epsilon(x,u^\delta(x))\nabla u^{\delta,\epsilon}(x)\right) &= 0 \mbox{ in }\Omega\\
u^{\delta,\epsilon} &= f \mbox{ on }\partial\Omega.
\end{aligned}\end{equation}
Then we have the following identity which follows from  \eqref{ub3} and \eqref{ad13}.
\begin{align}\label{619}
\int_{\Omega} A^{\epsilon}(x,u^\epsilon) \nabla\left(   u^\epsilon- u^{\delta,\epsilon}\right)\cdot \nabla\left(u^\epsilon- u^{\delta,\epsilon}\right)dx& \notag \\
=\int_{\Omega} &\left( A^\epsilon(x,u^{\delta}) - A^\epsilon(x,u^\epsilon)\right) \nabla u^{\delta,\epsilon}\cdot\nabla\left( u^\epsilon- u^{\delta,\epsilon}\right)dx
\end{align}
Using ellipticity for $A^{\epsilon}$ gives us
\begin{align}\label{620}
\alpha ||u^{\epsilon} - u^{\delta,\epsilon}||^2_{H^1_{0}(\Omega)}  &\leq\int_{\Omega} A^{\epsilon}(x,u^\epsilon) \nabla\left(   u^\epsilon- u^{\delta,\epsilon}\right)\cdot \nabla\left(u^\epsilon- u^{\delta,\epsilon}\right)dx \notag \\
&=\int_{\Omega} \left( A^\epsilon(x,u^{\delta}) - A^\epsilon(x,u^\epsilon)\right)\cdot \nabla u^{\delta,\epsilon}\cdot\nabla\left( u^\epsilon- u^{\delta,\epsilon}\right)dx \notag\\
&\leq\left( \sum_{i,j=1}^n||a_{ij}^{\epsilon}(x,u^\delta) - a_{ij}^{\epsilon}(x,u^{\epsilon})||_{L^{\infty}(\Omega)}\right)||u^{\delta,\epsilon}||_{H^1(\Omega)}||u^{\epsilon} - u^{\delta,\epsilon}||_{H^1_0(\Omega)}
\end{align}
Using the fact that $u^{\delta,\epsilon}\in H^{1}(\Omega)$ solves \eqref{ad13} with 
$||u^{\delta,\epsilon}||_{H^1(\Omega)} \leq C||f||_{H^{\frac{1}{2}}(\partial\Omega)}$
where $C$ is independent of $\delta$ and $\epsilon$, from \eqref{620} we obtain 
\begin{equation}\label{ad14}
  ||u^{\epsilon} - u^{\delta,\epsilon}||_{H^1_{0}(\Omega)} \leq C \left( \sum_{i,j=1}^n||a_{ij}^{\epsilon}(x,u^\delta) - a_{ij}^{\epsilon}(x,u^{\epsilon})||_{L^{\infty}(\Omega)}\right).
\end{equation}
As $A^\epsilon(\cdot,t)$ is uniformly Lipschitz in $x$ and $\epsilon$, therefore using \eqref{616} gives us 
\begin{align*} \left( \sum_{i,j=1}^n||a_{ij}^{\epsilon}(x,u^\delta) - a_{ij}^{\epsilon}(x,u^{\epsilon})||_{L^{\infty}(\Omega)}\right) \leq L\delta.\end{align*}
Consequently, from \eqref{ad14} it follows that, 
\begin{equation}\label{lb1}
 ||u^{\epsilon} - u^{\delta,\epsilon}||_{H^1_{0}(\Omega)} \leq C\delta.
\end{equation}
where $C$ is independent of $\epsilon$ and $\delta$ and this inequality holds for sufficiently small $\epsilon$.

Since $u^\delta(x)$ are constant in each $\Omega^z$ for $z=1,2...,m$, then applying the $H$-convergence result of \eqref{ub11} where we take $\widetilde{A^{\epsilon}}(x) = A^{\epsilon}(x, u^{\delta}(x)) = A(x, \frac{x}{\epsilon},u^{\delta}(x)) = \widetilde{A}_{\delta}(x,\frac{x}{\epsilon})$ in $\Omega^z$, we have
\[
\widetilde{A}_{\delta}(x,\frac{x}{\epsilon}) \mbox{ $H$-converges to } A^{*}_{\delta}(x) \quad\mbox{in }\Omega^z, \quad z=1,2,...,m.
\]
where $A^{*}_{\delta}(x)=A^{*}(x,u^{\delta}(x))$ is defined as in \eqref{ad5} with constant $t = u^{\delta}(x)$ for $x \in \Omega ^z, z=1,2,...,m$. Hence in the whole domain $\Omega$ we have 
\[
 \mbox{$A^{\epsilon}(x,u^{\delta}(x))$ H-converges to $A^{*}(x,u^{\delta}(x))$ in $\Omega$.}
\]
This means that we have 
\begin{equation}\label{lb2}
\begin{aligned}
u^{\delta,\epsilon} &\rightharpoonup u^{\delta,*} \mbox{ in }H^{1} (\Omega)  \\
A^\epsilon(x,u^\delta)\nabla u^{\delta,\epsilon} &\rightharpoonup A^{*}(x,u^\delta)\nabla u^{\delta,*} \mbox{ in }  L^{2}(\Omega)^N.
\end{aligned}
\end{equation}
where,  $u^{\delta,*} \in H^1(\Omega)$ is the  solution to the linear PDE
\begin{equation}\begin{aligned}\label{624}
-div\left( A^{*}(x,u^\delta(x))\nabla u^{\delta,*}(x)\right) &= 0 \mbox{ in }\Omega\\
u^{\delta,*} &= f \mbox{ on }\partial\Omega.
\end{aligned}\end{equation}
Let $w \in H^1(\Omega)$ be the solution to linear PDE
\begin{equation}\begin{aligned}\label{ad17}
-div\left( A^{*}(x,u(x))\nabla w(x)\right) &= 0 \mbox{ in }\Omega\\
w &= f \mbox{ on }\partial\Omega.
\end{aligned}\end{equation}
Then similar to \eqref{619} we have the following identity which follows from the equations \eqref{624} and \eqref{ad17}:
\begin{align*}
\int_{\Omega} A^{*}(x,u^\delta) \nabla\left(   w- u^{\delta,*}\right)\cdot \nabla\left(w- u^{\delta,*}\right)dx& \notag \\
=\int_{\Omega} &\left( A^{*}(x,u) - A^{*}(x,u^\delta)\right) \nabla u^{\delta,*}\cdot\nabla\left( w- u^{\delta,*}\right)dx.
\end{align*}
Then together with \eqref{ad2} and the fact that $A^{*}(\cdot,t)$ is uniformly Lipschitz in $x$ we get the following estimate
\begin{align}\label{lb3}
||u^{\delta,*} - w||_{H^1_0(\Omega)} \leq C \delta
\end{align}
where $C$ is independent of $\delta$.

Combining  \eqref{lb1}, \eqref{lb2} and \eqref{lb3} and by writing 
\[
\langle u^{\epsilon} - w,v\rangle = \langle(u^{\epsilon} - u^{\delta,\epsilon}),v\rangle + \langle(u^{\delta,\epsilon} - u^{\delta,*}),v\rangle + \langle (u^{\delta,*} - w^{*}),v\rangle
\]
where $\langle,\rangle,$ denotes the usual scalar product on $H^1(\Omega)$ and $v$ in $H^1(\Omega)$ is arbitrary, we obtain  
\begin{equation}\label{630}
|\langle u^{\epsilon}-w,v\rangle | \leq C\delta  ||v||_{H^1(\Omega)}
\end{equation}
where $C$ is independent of $\epsilon,\delta,v$ and the estimate holds for sufficiently small $\epsilon$. From \eqref{630}, since $\delta>0$ is arbitrary we can conclude that as $\epsilon \to 0$ 
\[
u^{\epsilon} \rightharpoonup w \mbox{ in } H^1(\Omega) 
\]
and by uniqueness of the weak limit we have $w=u$. 
Hence we obtain the homogenized equation as  
\begin{equation}\begin{aligned}\label{lb4}
-div\left( A^{*}(x,u(x))\nabla u(x)\right) &= 0 \mbox{ in }\Omega\\
u &= f \mbox{ on }\partial\Omega.
\end{aligned}\end{equation}
We note that as $A^{*}(\cdot,t)$ is uniformly Lipschitz in $x$ (By Lemma \ref{lemma4.2}), so from Theorem \ref{appendix}, it follows that the above problem \eqref{lb4} has the unique solution $u\in H^{1}(\Omega)$. 

Next we prove the the $L^2$ weak limit of the flux  $A^\epsilon(x,u^\epsilon)\nabla u^\epsilon$ is $A^{*}(x,u)\nabla u$. Let us consider the following expression
\begin{align}\label{632}
||A^\epsilon(x,u^\epsilon)\nabla u^\epsilon - A^\epsilon(x,u^\delta)\nabla u^{\delta,\epsilon}||_{L^2(\Omega^N)}\leq\ & ||A^{\epsilon}(x,u^{\epsilon})\nabla \left(u^\epsilon- u^{\delta,\epsilon}\right)||_{L^2(\Omega^N)}\notag\\
&\qquad+ ||\left(A^\epsilon(x,u^\epsilon) - A^{\epsilon}(x,u^\delta)\right)\nabla u^{\delta,\epsilon}||_{L^2(\Omega^N)}.
\end{align}
Then from the right hand side of \eqref{632} we get 
\begin{align}\label{633}
||A^{\epsilon}(x,u^{\epsilon})\nabla \left(u^\epsilon- u^{\delta,\epsilon}\right)||_{L^2(\Omega^N)}&\leq \left(\sum_{j=1}^n ||a_{ij}(x,u^\epsilon)||_{L^{\infty}(\Omega)} \right)||u^\epsilon - u^{\delta,\epsilon}||_{H^1(\Omega)}\notag \\
&\leq C\delta.
\end{align}
and using the Lipschitz continuity of $A^\epsilon(\cdot,t)$ we get 
\begin{align}\label{455}
||\left(A^\epsilon(x,u^\epsilon) - A^{\epsilon}(x,u^\delta)\right)\nabla u^{\delta,\epsilon}||_{L^2(\Omega^N)}&\leq \left( \sum_{j=1}^n ||a_{ij}^{\epsilon}(x,u^\epsilon) - a_{ij}^{\epsilon}(x,u^\delta)||_{L^{\infty}(\Omega)}\right)||u^{\delta,\epsilon}||_{H^1(\Omega)}\notag \\
&\leq C\delta.
\end{align}
Thus from \eqref{632}, \eqref{633} and \eqref{455} we have 
\begin{equation}\label{lb8}
||A^\epsilon(x,u^\epsilon)\nabla u^\epsilon - A^\epsilon(x,u^\delta)\nabla u^{\delta,\epsilon}||_{L^2(\Omega^N)}\leq C\delta.
\end{equation}
where $C$ is independent of $\epsilon$, $\delta$ and the estimates hold for sufficiently small $\epsilon$. \\
In a similar manner, we can prove the estimates 
\begin{align}\label{lb9}
||A^{*}(x,u^\delta)\nabla u^{\delta,*} - A^{*}(x,u)\nabla u||_{L^2(\Omega^N)} \leq C\delta.
\end{align}
where $C$ is independent of $\delta$. Finally we consider
\begin{align}\label{638}
\langle\left( A^\epsilon(x,u^\epsilon)\nabla u^\epsilon - A^{*}(x,u)\nabla u\right), v\rangle
&= 
\langle\left( A^\epsilon(x,u^\epsilon)\nabla u^\epsilon - A^{\epsilon}(x,u^\delta)\nabla u^{\delta,\epsilon}\right), v\rangle + \notag \\
&\quad \quad\langle\left( A^\epsilon(x,u^\delta)\nabla u^{\delta,\epsilon} - A^{*}(x,u^\delta)\nabla u^{\delta,*}\right), v\rangle +  \notag\\
&\quad\quad\langle\left( A^{*}(x,u^\delta)\nabla u^{\delta,*} - A^{*}(x,u)\nabla u\right), v\rangle.
\end{align}
here $\langle,\rangle$ denotes the usual inner product on $L^2(\Omega)^N$ and $v \in L^2(\Omega)^N$ is arbitrary. Now by using  \eqref{lb2}, \eqref{lb8}, and \eqref{lb9} we conclude from \eqref{638} that 
\begin{equation}\label{sb2}
|\langle\left( A^\epsilon(x,u^\epsilon)\nabla u^\epsilon - A^{*}(x,u)\nabla u\right), v\rangle| \leq C\delta.
\end{equation}
where $C$ is independent of $\epsilon, \delta$ and $v\in L^2(\Omega)^N$. This inequality holds for sufficiently small $\epsilon$. If we consider $\delta>0$ to be arbitrary, then \eqref{sb2} yields
\[A^\epsilon(x,u^\epsilon)\nabla u^\epsilon \rightharpoonup A^{*}(x,u)\nabla u \quad\mbox{ weakly in }L^2(\Omega)^N.\]
This completes the discussion of our proof. 
\end{proof}
\begin{remark}
Let us now consider a Lipschitz domain $\Omega^\prime$ such that $\Omega \subset \subset \Omega^\prime$. We extend the homogenization co-efficients $A^{\epsilon}(x,t)$  defined in Theorem \ref{theorem4.1} by identity in $\Omega^{\prime} \setminus \Omega \times \mathbb{R}$ and still denote the extended coefficients  $A^{\epsilon}$. Fix $f \in H^{\frac{1}{2}}(\partial \Omega^{\prime})$. We consider the following equation 
\begin{equation*}
\begin{aligned}
-div(A^{\epsilon}(x,u^{\epsilon}) \nabla u^{\epsilon}) &= 0 \mbox{ in } \Omega^\prime \\
u^{\epsilon}&= f \mbox{ on } \partial \Omega^\prime
\end{aligned}
\end{equation*}

Note that by interior regularity $u^{\epsilon} \in C^{2,\gamma}(\overline{\Omega}) \cap H^{1}(\Omega^\prime)$. 

Since the homogenization co-efficients $A^{\epsilon}(x,t) = I$ in the ring $\Omega^{\prime} \setminus \Omega \times \mathbb{R}$ for any $\epsilon>0$,  the conclusion of Theorem \ref{theorem4.1} still holds and passing through the limit in $\epsilon$ we obtain 
\[
A^{\epsilon}(x,t)\mbox{ H-converges to }A^{*}(x,t) \mbox{ in } \Omega^\prime \times \mathbb{R}.
\]
where, by localization principle, $A^{*}(x,t) = I$ in $\Omega^\prime \setminus \Omega \times \mathbb{R}$ and $A^{*}(x,t)$ in $\Omega \times \mathbb{R}$ is  in \eqref{ad5}.

Moreover, we also have 
\begin{align*}
u^\epsilon &\rightharpoonup u \quad\mbox{weakly in }H^1(\Omega^\prime)\\ 
A^\epsilon(x,u^\epsilon)\nabla u^\epsilon &\rightharpoonup A^{*}(x,u)\nabla u\quad\mbox{weakly in }L^2(\Omega^\prime)^N;
\end{align*}
where $u\in H^1(\Omega^\prime)$ is the unique solution to
\begin{equation*}\begin{aligned}
-div\left( A^{*}(x,u(x))\nabla u(x)\right) &= 0 \mbox{ in }\Omega^\prime\\
u &= f  \mbox{ on }\partial \Omega^\prime.
\end{aligned}
\end{equation*}
\end{remark}

\subsection{Regular isotropic approximate cloak in $\mathbb{R}^N$ }
In this section we approximate the anisotropic approximate (or near) cloaks 
$\sigma_A^r(x,t)$ as defined in \eqref{sigmaar}  by isotropic conductivities, which then will themselves be approximate
cloaks. We restrict our attention to the case when $\Omega = B_2$ and $E=B_1$ needs to be cloaked.

We will be considering the isotropic conductivities of the  form
\begin{equation}\label{ad7}
A^\epsilon(x,t)= \sigma(x,\frac{|x|}{\epsilon},t)I ,\quad(x,t)\in\Omega\times\mathbb{R}.
\end{equation}
where  $\sigma(x,r^\prime,t)$ is a smooth,
scalar valued function such that 
\begin{enumerate}
	\item[A1)] $0 < \alpha \leq \sigma(x,r^\prime,t) \leq \beta < \infty$ for all $(x,r',t) \in \Omega \times [0,1] \times \mathbb{R}$.
	\item[A2)]
	$t \mapsto \sigma(x,r',t)$ is uniformly Lipschitz for $(x,r') \in \Omega \times [0,1]$.
	\item[A3)]
	$\sigma$ is periodic in $r^\prime$ with period $1$ i.e
	$\sigma(x,r^\prime+1,t)=\sigma(x,r^\prime,t), \quad (x,r^\prime,t)\in\Omega\times[0,1]\times\mathbb{R}$.
\end{enumerate}

We would like to find the homogenized coefficient $A^{*}(x,t)$ given in \eqref{ad5} for this choice of $A^\epsilon(x,t)$ in \eqref{ad7}. In order to do that, we introduce polar coordinates.
Let
$s_1=(r,\theta_1,..
,\theta_N)$ and $s_2=(r^\prime,\theta^\prime_1,..,\theta^\prime_N)$ be spherical coordinates
corresponding to
 two different scales.
Next we homogenize the conductivity
in the $(r^\prime,\theta^\prime_1,..,\theta^\prime_N)$-coordinates.
Let us consider the canonical basis vectors $e_1,.., e_N$ of $\mathbb{R}^N$  in $r^\prime$, $\theta^\prime_1,...,\theta^\prime_N$ directions,
respectively. Then for almost every $(s_1,t)\in\Omega\times \mathbb{R}$, let $\chi_k(s_1,s_2,t),\, k=1,..,N$ be
the solutions of
\begin{equation}\label{ad8}\begin{aligned}
&-div_{s_2}(\sigma(s_1,r^\prime,t)(\nabla_{s_2}\chi_k(s_1,s_2,t)+e_k)=0
\mbox{ in } \mathbb{R}^N,\\
 &s_2=(r^\prime,\theta^\prime_1,..,\theta^\prime_N) \rightarrow\chi_k(s_1,s_2,t) \quad\mbox{is $1$-periodic in each of coordinate $r^\prime,\theta_1^\prime,..,\theta_N^\prime$. }
\end{aligned}\end{equation}
Further, it is assumed that
\begin{equation}\label{ad9}
\int_ {Y} \chi_k(s_1,s_2^\prime,t)ds_2^\prime =0,
\end{equation}
where, $s_2^\prime=(r^\prime,\theta^\prime_1,..,\theta^\prime_N)$ and $ds_2^\prime =dr^\prime d\theta^\prime_1...d\theta^\prime_N$.

Since $\sigma(s_1,r^\prime,t)$ is independent of $\theta^\prime_1,..,\theta^\prime_N$, therefore from \eqref{ad8} together with \eqref{ad9}, it implies that $\chi_k=0$ for $k=2,..,N$.
Let us consider the equation \eqref{ad8} for $\chi_1$ which satisfies
\begin{equation}\label{ad11}
\frac{\p}{\p r^\prime} \left( \sigma(s_1, r^\prime,t)\frac{\p \chi_1(s_1,s_2,t)}{\p r^\prime}\right)=
-\frac{\p  \sigma(s_1, r^\prime,t)}{\p r^\prime}.
\end{equation}
From the fact that $\chi_1$ is $1$-periodic with respect to the variables  $\theta^\prime_1,...,\theta^\prime_n$ it implies that $\chi_1$ is independent of $\theta^\prime_1,...,\theta^\prime_n$ Moreover, from \eqref{ad11} we get
\begin{equation}
\frac{\p \chi_1}{\p r^\prime} =-1 + \frac{C}{\sigma(s_1, r^\prime,t)}.
\end{equation}
where the constant $C$ can be found by using the periodicity of $\chi_1$  with respect to $r^\prime$ as 
\[
C =\frac{1}{\int_0^1\sigma^{-1}(s_1, r^\prime,t)dr^\prime } :=\underline{\sigma}(s_1,t),
\]
where $\underline{\sigma}(s_1,t)$ denotes the harmonic mean of $\sigma(s_1,\cdot,t)$ in the second variable.

Let  $\overline{\sigma}(s_1,t)$ denote the arithmetic mean of $\sigma(s_1,\cdot,t)$ in the second variable as:
\[\overline{\sigma}(s_1,t) =\int_0^1 \sigma(s_1,r^\prime,t)dr^\prime.
\]
Then from \eqref{ad5} the homogenized conductivity, say  $\widehat{\sigma}_{kl}(s_1,t)$, turns out to be 
\[
 \widehat{\sigma}_{kl}(s_1,t) = \int_Y \sigma(s_1,r^\prime,t)\left(\frac{\partial\chi_k(s_1,s_2^\prime,t)}{\partial (s_2^\prime)_l} + \delta_{lk}\right)ds_2^\prime
\]
and can be written as 
\[
\widehat{\sigma}(x,t)=\underline{\sigma}(x,t)\Pi(x)+\overline{\sigma}(x,t)(I-\Pi(x)),
\]
where $\Pi(x):\mathbb{R}^N\to \mathbb{R}^N$ is the projection on to the radial direction, defined by
\[
\Pi(x)\,v=\left(v\,\cdot \frac{x}{|x|}\right)\frac{x}{|x|},
\]
i.e., $\Pi(x)$ is represented by the matrix $|x|^{-2}xx^t$, cf. \cite{GYLU}. 

Next we give more explicit construction of the regular isotropic cloak.
This construction is a generalization of the linear case presented in \cite{GYLU}.  Let us consider functions $\phi:\mathbb{R}\to \mathbb{R}$ and $\phi_M:\mathbb{R}\to \mathbb{R}$ given by 
\begin{eqnarray} \label{phi}
\phi(t)=\left\{\begin{array}{cl}
0,& t<0,\\
\frac 12 t^2,& 0\leq t<1,\\
1- \frac 12 (2-t)^2,& 1\leq t<2\\
1,& t\geq 2,\end{array}
\right.
\end{eqnarray} 
and
\begin{eqnarray} \label{phim}
\phi_M(t)=\left\{\begin{array}{cl}
0,& t<0,\\
\phi(t),& 0\leq t<2,\\
1,& 2\leq t<L-2,\\
\phi(M-t),& t\geq L-2.\end{array}
\right.
\end{eqnarray} 
Let us define
\[
\sigma(x,r^\prime,t) =\left[1+a^1(x,t)\zeta_1(\frac{r}{\epsilon})-a^2(x,t)\zeta_2(\frac{r}{\epsilon})\right]^2, 
\]
where $a^k(x,t)$, $k=1,2$ are chosen positive smooth functions such that $\sigma(x, r^\prime,t)$ satisfies the conditions immediately following \eqref{ad7} viz A1), A2) and A3). In particular, we choose $a_1$ and $a_2$ to satisfy conditions A1) and A2), for some possibly different choice of constants $\alpha, \beta, L$. And for some positive integer $M$, we define $\zeta_j:\mathbb{R}\to \mathbb{R}$ to be $1-$periodic functions,
\begin{eqnarray*} 
\zeta_1(t)&=&\phi_M\Big(2Mt\Big),\quad 0\leq t<1,\\
\zeta_2(t)&=&\phi_M\Big(2M(t-\frac 12)\Big),\quad 0\leq t<1.
\end{eqnarray*} 
where $\phi_M$ is as defined in \eqref{phim}.

To this end, we introduce a new parameter $\eta>0$
and solve for each $(x,t)$ the parameters $a^1(x,t), a^2(x,t)$ from the following expressions of the equations
for the harmonic and arithmetic averages for $1<R<2$.
\begin{eqnarray*} 
& &\int_0^1 [1+a^1(x,t)\zeta_1(r^\prime)-a^2(x,t)\zeta_2(r^\prime)]^{-2}dr^\prime\\
& &\ \ =
\left\{\begin{array}{cl}
2R^{-2}(R-1)^2(1-\phi(\frac{R-r}{\eta}))+ \psi(x,t) \phi(\frac{R-r}{\eta}),&\hbox{if } |x|<R,\\
2r^{-2}(r-1)^2,&\hbox{if }  R<|x|<2,\\
1,&\hbox{if } |x|>2,\end{array}\right.\\
& &
\int_0^1    [1+a^1(x,t)\zeta_1(r^\prime)-a^2(x,t)\zeta_2(r^\prime)]^2 dr^\prime
\\
& &\ \ =
\left\{\begin{array}{cl}
2(1-\phi(\frac{R-r}\eta))+ \psi(x,t) \phi(\frac{R-r}\eta), &\hbox{if } |x|<R,\\
2, &\hbox{if } R<|x|<2,\\
1,&\hbox{if } |x|>2,\end{array}\right.
\end{eqnarray*} 
where, $\psi(x,t)\in C^\infty({B(0,2)}\times\mathbb{R})\geq C >0$.

Thus by obtaining $a^1(x,t)=a^1_{R,\eta}(x,t)$ and $a^2(x,t)=a^2_{R,\eta}(x,t)$, the homogenized conductivity becomes
\[
\widehat\sigma(x,t)=\sigma_{R,\eta}(x,t)=
\left\{\begin{array}{cl}
\pi_{R}(1-\phi(\frac {R-r}\eta))+ \psi(x,t) \phi(\frac {R-r}\eta),&\hbox{if } |x|<R,\\
\pi_R,&\hbox{if } R<|x|<2,\\
1,&\hbox{if } |x|>2,\end{array}\right.
\]
where
\[
\pi_{R}=2R^{-2}(R-1)^2\Pi(x)+2(1-\Pi(x)).
\]
Note that the term $\psi(x,t)\phi(\frac{R-r}\eta)$
connects the exterior conductivity smoothly to the interior conductivity $\psi(x,t)$.

Now we first let $\epsilon\to 0$, then $\eta\to 0$ and finally $R\to 1$,
the obtained homogenized conductivities approximate better and better the cloaking conductivity $\sigma_{A}$ (cf \eqref{defnsigma}).
Thus 
we choose appropriate sequences $R_n\to 1$, $\eta_n\to 0$ and $\e_n\to 0$ and denote
\[
\sigma_n(x,t):=\left[1+a^1_{R_n,\eta_n}(x,t)\zeta_1(\frac r{\e_n})-a^2_{R_n,\eta_n}(x,t)
\zeta_2(\frac r{\e_n})\right]^2,\quad r=|x|. 
\]
Let $\Omega^\prime = B_3$. The above sequence $\sigma_n(x,t)$ is the desired regular isotropic sequence which approximates the cloaking for quasilinear problem in the following sense: Let $u_n\in H^1(\Omega^\prime)$ solve
\begin{equation*}\begin{aligned}
-div( \sigma_n(x,u_n)\nabla u_n(x)) &= 0 \quad\mbox{in }\Omega^\prime\\
u_n &= f \quad\mbox{on }\partial\Omega^\prime
\end{aligned}\end{equation*}
with $f\in H^{\frac{1}{2}}(\partial\Omega^\prime)$, then as $n\to\infty$
\begin{equation}\label{ad22}
 u_n \mbox{ weakly converges to }u \mbox{ in }H^1(\Omega^\prime)
\end{equation}
where $u \in H^1(\Omega^\prime)$ solves
\begin{equation*}\begin{aligned}
-div (\sigma_A(x,u)\nabla u(x)) &= 0 \quad\mbox{in }\Omega^\prime\\
u &= f \quad\mbox{on }\partial\Omega^\prime
\end{aligned}\end{equation*}
and $\sigma_A(x,t)$ is as defined in  \eqref{defnsigma} in $\Omega$ and we assume $\sigma_A(x,t) = I \text{ on } \Omega^\prime \setminus \Omega \times \mathbb{R}$.
\subsection{Convergence of DN map}
Let  $\Omega^\prime= B(0,3)$. Recall that we extend our isotropic regular cloaks $\sigma_n(x,t)$ and the perfect cloak $\sigma_A(x,t)$ by $I$ outside $\Omega = B_2$. In particular, for some $0<\kappa<1$
\begin{equation*}
 \sigma_n(x,t) =\sigma_A(x,t)= I \quad\forall (x,t)\in B(0,2+\kappa,3)\times \mathbb{R}. 
\end{equation*}
We recall the Dirichlet-to-Neumann map is now given by 
\begin{equation*}
 \Lambda_n, \Lambda : H^{\frac{3}{2}}(\partial\Omega^\prime) \to H^{\frac{1}{2}}(\partial\Omega^\prime)\quad \mbox{defined as } \Lambda_n (f) := \frac{\partial u_n}{\partial \nu} \mbox{ and } \Lambda(f) := \frac{\partial u}{\partial \nu}\mbox{ respectively}.
\end{equation*}
We then consider the state $w_n=(u_n-u)$, which satisfy 
\begin{equation*}\begin{aligned}
 -\Delta w_n &= 0 \quad\mbox{ in }B(0,2+\kappa,3)\\
 w_n &= 0 \quad\mbox{on }\partial B(0,3).
\end{aligned}\end{equation*}
From the elliptic regularity theory, we conclude $w_n\in H^2(\widetilde{\Omega})$ where \\$\widetilde{\Omega} \subset B(0,2+\kappa,3))\cup \partial B(0,3)$.  Moreover, from \cite[Theorem 9.13]{GT}, we have
\begin{equation*}
||w_n||_{H^2(\widetilde{\Omega})}\leq C||w_n||_{L^2(B(0,2+\kappa,3))}.
\end{equation*}
Now if $f\in H^{\frac{3}{2}}(\partial \Omega^\prime)$ then 
\begin{equation}\label{ad23}
 ||\frac{\partial u_n}{\partial \nu}-\frac{\partial u}{\partial \nu}||_{H^{\frac{1}{2}}(\partial\Omega^\prime)}\leq ||u_n-u||_{H^2(\widetilde{\Omega})}\leq C||u_n-u||_{L^2(B(0,2+\kappa,3))}
\end{equation}
As we see from \eqref{ad22} by using the Rellich compactness theorem we have 
\begin{equation}
 u_n \mbox{ strongly converges to }u \mbox{ in }L^2(\Omega^\prime).
\end{equation}
Thus from \eqref{ad23}, for $f\in H^{\frac{3}{2}}(\partial\Omega^\prime)$ we obtain
\begin{equation*}
||(\Lambda_n-\Lambda)(f)||_{H^{\frac{1}{2}}(\partial\Omega^\prime)} = ||\frac{\partial u_n}{\partial \nu}-\frac{\partial u}{\partial \nu}||_{H^{\frac{1}{2}}(\partial\Omega^\prime)} \to 0  \quad\mbox{ as }n\to \infty
\end{equation*}
which gives the required strong convergence of the DN map.

 \begin{appendix}
 \appendix \section[A] \\
 In this section, we show the well posedness of the boundary value problem \eqref{basiceqn} and define the Dirichlet-to-Neumann map associated to \eqref{basiceqn} in a weak sense. 
\begin{theorem}[Existence and Uniqueness]\label{appendix}
Consider $\Omega \subset \mathbb{R}^N$, $N \geq 2$, a bounded open set with Lipschitz boundary. Let $\mathcal{M}(\alpha, \beta,L;\Omega \times \mathbb{R})$ with
$0<\alpha<\beta < \infty$ and $L>0$ denote the set of all real $N\times N$ symmetric matrices $A(x,t)$ of functions defined almost
everywhere on $\Omega \times \mathbb{R}$ such that if $A(x,t)=[a_{kl}(x,t)]_{1\leq k,l\leq N}$ then 
\begin{enumerate}
 \item $a_{kl}(x,t)=a_{lk}(x,t)\ \forall l, k=1,..,N $
 \item $ (A(x,t)\xi,\xi)\geq \alpha|\xi|^2,\ |A(x,t)\xi|\leq \beta|\xi|,\ \ \forall\xi \in \mathbb{R}^N,\ \mbox{ a.e. }x\in\Omega \mbox{ and, }$
\item $|a_{kl}(x,t) - a_{kl}(x,s)| \leq L|t-s|\mbox{ for  a.e } x \in \Omega \mbox{ and any } t, s \mbox{ in } \mathbb{R}.$
\end{enumerate}

Then the boundary value problem \eqref{basiceqn} has a unique solution $u_{f} \in H^{1}(\Omega)$ for any $f \in H^{\frac{1}{2}}(\partial \Omega)$ with $||u_{f}||_{H^{1}(\Omega)} \leq C ||f||_{H^{1/2}(\partial \Omega)}$ where $C = C(\Omega, N, L,\alpha,\beta)$.
\end{theorem}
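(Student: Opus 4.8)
The plan is to reduce to homogeneous boundary data, obtain existence together with the a~priori estimate by a compactness argument applied to the linearized (frozen-coefficient) problem, and then establish uniqueness, which is where the main difficulty lies.

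\textbf{Reduction.} First I would fix a bounded linear extension $f\mapsto F\in H^1(\Omega)$ with $F|_{\partial\Omega}=f$ and $\|F\|_{H^1(\Omega)}\le C(\Omega)\|f\|_{H^{1/2}(\partial\Omega)}$, and seek $u=F+w$ with $w\in H^1_0(\Omega)$; the weak formulation of \eqref{basiceqn} then reads: find $w\in H^1_0(\Omega)$ with $\int_\Omega A(x,F+w)\nabla(F+w)\cdot\nabla\varphi\,dx=0$ for all $\varphi\in H^1_0(\Omega)$.

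\textbf{Existence and the a~priori bound.} For each $v\in L^2(\Omega)$ the frozen coefficient $x\mapsto A(x,v(x))$ is a Carathéodory matrix field (measurable in $x$ by (1)--(2), continuous in $t$ by (3)) which is symmetric, $\alpha$-elliptic and bounded by $\beta$, so Lax--Milgram gives a unique $u_v\in H^1(\Omega)$ solving $-\mathrm{div}(A(x,v)\nabla u_v)=0$ in $\Omega$, $u_v=f$ on $\partial\Omega$; testing with $u_v-F\in H^1_0(\Omega)$ and using coercivity and Poincaré yields $\|u_v\|_{H^1(\Omega)}\le C(\alpha,\beta,\Omega)\|f\|_{H^{1/2}(\partial\Omega)}$ uniformly in $v$. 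I would then consider $T\colon L^2(\Omega)\to L^2(\Omega)$, $T(v)=u_v$: it maps $L^2(\Omega)$ into the closed ball $K$ of radius $C\|f\|_{H^{1/2}(\partial\Omega)}$ in $H^1(\Omega)$, which is convex and compact in $L^2(\Omega)$ by Rellich, and $T$ is continuous on $L^2(\Omega)$ --- if $v_n\to v$ in $L^2$ then, along a subsequence, $v_n\to v$ a.e., hence $A(\cdot,v_n)\to A(\cdot,v)$ boundedly a.e. and so in every $L^p(\Omega)$, $p<\infty$; the uniform $H^1$-bound and Rellich allow one to pass to the limit in the weak formulation of the linear problem, whose limit equals $u_v$ by uniqueness. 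Schauder's fixed point theorem then produces $u\in K$ with $T(u)=u$, a weak solution of \eqref{basiceqn} with $\|u\|_{H^1(\Omega)}\le C\|f\|_{H^{1/2}(\partial\Omega)}$. (Equivalently, existence is immediate from the theory of coercive pseudo-monotone operators, the map $w\mapsto -\mathrm{div}(A(\cdot,F+w)\nabla(F+w))$ being of Leray--Lions type on $H^1_0(\Omega)$ with the $t$-dependence handled through the compact embedding $H^1(\Omega)\hookrightarrow\hookrightarrow L^2(\Omega)$.)

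\textbf{Uniqueness --- the hard part.} Given two solutions $u_1,u_2$, set $w=u_1-u_2\in H^1_0(\Omega)$. Writing $A(x,u_1)\nabla u_1-A(x,u_2)\nabla u_2=A(x,u_1)\nabla w+(A(x,u_1)-A(x,u_2))\nabla u_2$, subtracting the two weak formulations and testing with $\varphi=w$ gives
\[
\alpha\|\nabla w\|_{L^2(\Omega)}^2\le\int_\Omega A(x,u_1)\nabla w\cdot\nabla w\,dx=-\int_\Omega\bigl(A(x,u_1)-A(x,u_2)\bigr)\nabla u_2\cdot\nabla w\,dx,
\]
and the Lipschitz condition (3), in the form $|A(x,u_1)-A(x,u_2)|\le NL|w|$, yields
\[
\alpha\|\nabla w\|_{L^2(\Omega)}^2\le NL\int_\Omega|w|\,|\nabla u_2|\,|\nabla w|\,dx .
\]
The right-hand side sits exactly at the borderline of Sobolev embedding, and making it absorbable is the crux of the argument. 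I would invoke a Meyers--Gehring higher-integrability estimate, so that $\nabla u_2\in L^{2+\delta}(\Omega)$ and (since $w$ solves a uniformly elliptic equation whose right-hand side is the divergence of an $L^{2+\delta}$ field) also $\nabla w\in L^{2+\delta'}(\Omega)$ for exponents depending only on $N,\alpha,\beta$, and then combine Hölder's and Young's inequalities with the Sobolev embedding $H^1_0(\Omega)\hookrightarrow L^q(\Omega)$ --- which holds for every $q<\infty$ when $N=2$ and for $q\le 6$ when $N=3$ --- to absorb the right-hand side into the left; in dimension $N=3$ this additionally requires exploiting the absolute continuity of $\int_B|\nabla u_2|^{2+\delta}$ over small balls together with a covering/Gronwall iteration over an exhausting family of subdomains. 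This forces $\nabla w\equiv 0$, hence $w\equiv 0$ since $w\in H^1_0(\Omega)$, and the dependence of the constant on $L$ enters precisely through this step. Finally, with well-posedness in hand, $\Lambda_A f\in H^{-1/2}(\partial\Omega)$ is defined weakly by $\langle\Lambda_A f,g\rangle:=\int_\Omega A(x,u_f)\nabla u_f\cdot\nabla G\,dx$ for any $H^1$-extension $G$ of $g$ (the value being independent of $G$ because $u_f$ solves \eqref{basiceqn}), and its boundedness is a consequence of the a~priori estimate.
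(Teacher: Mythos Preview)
Your existence argument coincides with the paper's: freeze the coefficient, apply Lax--Milgram for the linear problem, define $T(v)=u_v$, and use Schauder's fixed point theorem on a ball in $H^1(\Omega)$ viewed as a compact convex set in $L^2(\Omega)$. The a~priori bound is obtained in the same way.

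The uniqueness argument, however, has a real gap. After Young's inequality your key estimate becomes
\[
\tfrac{\alpha}{2}\|\nabla w\|_{L^2}^2 \le C(N,L,\alpha)\int_\Omega |w|^2\,|\nabla u_2|^2\,dx,
\]
and to absorb the right-hand side you need $\|w\|_{L^{q}}\lesssim\|\nabla w\|_{L^2}$ with $q$ matched to the integrability of $\nabla u_2$. Meyers only gives $\nabla u_2\in L^{2+\delta}$ for some \emph{small} $\delta=\delta(N,\alpha,\beta)$, which forces $q=2(2+\delta)/\delta$ to be very large; for $N=3$ the Sobolev exponent $2^*=6$ is nowhere near this, and even for $N=2$ you only get
\[
\|\nabla w\|_{L^2}\le C_q\,\|\nabla u_2\|_{L^{2+\delta}}\,\|\nabla w\|_{L^2},
\]
which concludes nothing unless $C_q\|\nabla u_2\|_{L^{2+\delta}}<1$, a smallness condition you do not have. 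The suggested ``covering/Gronwall iteration over an exhausting family of subdomains'' does not repair this: once you localize to a small ball, $w$ no longer vanishes on its boundary, so Poincar\'e is lost and the cutoff produces lower-order terms of exactly the same critical type. No such iteration is known to close this loop for general $A\in\mathcal{M}(\alpha,\beta,L)$.

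The paper avoids the absorption issue entirely by testing not with $w=u_1-u_2$ but with a \emph{nonlinear} function of it (a device going back to Artola and Chipot). One takes $\varphi=F_\epsilon(u_1-u_2)\in H^1_0(\Omega)$ with $F_\epsilon'(s)=\tfrac{1}{L^2s^2}$ on $\{s>\epsilon\}$ and $F_\epsilon=0$ on $\{s\le\epsilon\}$. The Lipschitz factor $|A(x,u_1)-A(x,u_2)|\le L|u_1-u_2|$ then cancels one power of $|u_1-u_2|$ in the denominator, and after Cauchy--Schwarz one is left with the uniform bound
\[
\int_\Omega\bigl|\nabla G_\epsilon(u_1-u_2)\bigr|^2\,dx\le \alpha^{-2}\int_\Omega|\nabla u_2|^2\,dx,\qquad G_\epsilon(s)=\int_\epsilon^{s}\frac{dt}{Lt}=\tfrac{1}{L}\log\tfrac{s}{\epsilon}\ \text{ on }\{s>\epsilon\}.
\]
Poincar\'e gives $\|G_\epsilon(u_1-u_2)\|_{L^2}\le C$ uniformly in $\epsilon$, and since $G_\epsilon(s)\to+\infty$ as $\epsilon\to 0$ whenever $s>0$, Fatou forces $u_1-u_2\le 0$ a.e.; exchanging $u_1$ and $u_2$ finishes the proof. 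This argument uses only $A\in\mathcal{M}(\alpha,\beta,L)$ and no higher integrability.
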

\begin{proof}
Let us fix $f \in H^{1/2}(\partial \Omega)$ and $\hat{v} \in L^{2}(\Omega)$. We consider the linear boundary value  problem 
\begin{equation}\label{app2}
\begin{aligned}
-div( A(x,\hat{v}) \nabla u) &= 0 \mbox{ in } \Omega  \\
u &= f \mbox{ on } \partial \Omega
\end{aligned}
\end{equation}
Since $A(x,\hat v) \in L^{\infty}(\Omega)$ we use Lax Milgram theorem and conclude that there exists a unique $u$ that solves the linearized boundary value problem \eqref{app2} with $||u||_{H^{1}(\Omega)} \leq C ||f||_{H^{1/2}(\partial \Omega)}$ where $C$ depends only on the parameters mentioned in the statement of the theorem.

We consider an operator $T: L^{2}(\Omega)\to L^{2}(\Omega)$ defined by $T(\hat{v}) = u$ where $u$ is a solution to (\ref{app2}). By existence and uniqueness theory for linear elliptic equations, T is well defined.  Note that $T$ maps 
$S = \left(u \in H^{1}(\Omega): ||u||_{H^{1}(\Omega)} \leq C ||f||_{H^{1/2}(\partial \Omega)}\right)$ to itself. $C$ may change line by line  but henceforth will depend only on $(\Omega, \alpha,\beta, L)$.

The existence of a solution to \eqref{basiceqn} will be proved if we show $T$ has a fixed point in S. We will use Schauder fixed point theorem to to prove this. For that, we need to show $T$ is a continuous operator and that $S$ is closed and convex subset of $L^{2}$.

Let $\hat{v_n} \to \hat{v}$ in $L^{2}(\Omega)$. Let $T(\hat{v_n}) = u_n$ and $T(\hat{v}) = u$. Since $(u_n - u) \in H^{1}_{0}(\Omega)$ we get
\begin{align}\nonumber
&\int \limits_{\Omega} \, (A(x,\hat{v_n})\nabla(u_n - u)) \cdot \nabla (u_n - u) \,dx \notag \\
&= \int \limits_{\Omega}\,( (A(x,\hat{v}) - \sigma(x,\hat{v_n}))\nabla u) \cdot \nabla (u_n - u) \,dx.
\end{align}
From Poincare inequality, Holder inequality and the uniform ellipticity assumption on $A(x,t)$ we get  
\begin{equation}\label{app3}
 \alpha ||u_n - u||_{L^{2}(\Omega)} \leq C||(A(x,\hat{v_n}) - A(x,\hat{v})) \nabla u||_{L^{2}(\Omega)}.
\end{equation}

Let $\epsilon$ be a positive real number. Let $E_n = (x \in \Omega: |\hat{v_n(x)} - \hat{v(x)}| \geq \sqrt{\epsilon}$. Since $ \hat{v_n} \to \hat {v}$ in $L^{2}(\Omega)$, $meas(E_n) \to 0$ as $n \to \infty$. (Convergence in $L^2(\Omega)$ implies  a.e convergence along a subsequence. We rename this subsequence $\hat v_n$ and proceed.) Moreover, there exists a $\delta > 0$ such that for any $E \subset \Omega$ with $ meas(E) < \delta$, we have $\int \limits_{E} \, |\nabla u|^{2} \,dx < \epsilon$. Choose a $n_0$ so that for $n \geq n_0$, $meas(E_n) < \delta$

For $n > n_0$, we then have
\begin{align*}
&\int \limits_{\Omega \setminus E_n} \, |(A(x,\hat{v}) - A(x,\hat{v_n})) \nabla u |^{2} \leq \epsilon  L ||f||^{2}_{H^{\frac{1}{2}}(\partial \Omega)} \mbox{ and }\notag \\
& \int \limits_{E_n} |(A(x,\hat{v}) - A(x,\hat{v_n})) \nabla u |^{2} \leq C \epsilon.
\end{align*}

Since $f$ is fixed and $\epsilon$ is arbitrary, $||(A(x,\hat{v_n}) - A(x,\hat{v}))\nabla u||_{L^{2}(\Omega)} \to 0$ as $n \to \infty$. By (\ref{app3}), $u_n \to u$ in $L^2(\Omega)$ proving the continuity of T. (Actually this only shows that there is a subsequence of $u_n$ that converges in $L^2(\Omega)$ to $u$. We run the entire argument with any arbitrary subsequence of $u_n$ and thereby get by the above argument that every subsequence of $u_n$ has a subsequence that converges in $L^2(\Omega)$ to $u$. Hence we can say $u_n \to u $ in $L^2(\Omega)$.) Note that $T$ is a compact operator  in $L^{2}$ topology by Rellich compactness theorem.

S is clearly a convex subset of $L^{2}(\Omega)$. To show S is closed, we let  $u_n \to u$ in $L^{2}(\Omega)$ and $u_n \in S$. The  sequence $u_n$ is bounded  in $H^{1}(\Omega)$. By Banach Alouglu theorem there exists a subsequence ${u_n}_{k}$ so that ${u_n}_{k} \to w$ weakly in $H^{1}(\Omega)$. We also have 
\begin{align*}
||w||_{H^{1}(\Omega)} \leq \liminf \limits _{k\to\infty}\;||{u_n}_k||_{H^{1}(\Omega)} \leq C||f||_{H^{1/2}(\partial \Omega)}
\end{align*}
Weak convergence in $H^{1}$ implies weak convergence in $L^{2}$. Thus by uniqueness of limits, $u=w$ showing S is closed. 

Let us now prove uniqueness. Let there exist $2$ solutions $u_1$ and $u_2$ to \eqref{basiceqn}. For $\epsilon >0$ define $F_{\epsilon}$ as 
\[F_{\epsilon}(x)= \begin{cases} 
      \int_{0}^{\epsilon} \frac{dt}{L^2t^2} & x \geq \epsilon\\
      0 & x \leq \epsilon \\
   \end{cases}
\]
$F_{\epsilon}$ is a piecewise continuous $C^1$ function such that
\begin{equation*}
|F_{\epsilon}^\prime(x)| \leq \frac{1}{L^2 \epsilon^2} \quad \forall x \in \mathbb{R}
\end{equation*}
By Corollary 2.14 in \cite{Chipot},
\[
F_{\epsilon}(u_1 - u_2) \in H^1_0(\Omega)
\]
We thus have, for any $v \in H^1_0(\Omega)$,
\[
\int_{\Omega} A(x,u_1) \nabla(u_1 - u_2) \cdot \nabla v \,dx = \int_{\Omega} (A(x,u_2) - \sigma(x,u_1)) \nabla u_2 \cdot\nabla v \,dx.
\]
Choose $v = F_{\epsilon}(u_1 - u_2)$ to get
\begin{align}\label{1143}
&\int_{\{u_1 - u_2 > \epsilon \}} A(x,u_1) \frac{|\nabla(u_1 - u_2)|^2}{L^2 (u_1 - u_2)^2}\,dx \notag \\
&=\int_{\{u_1 - u_2 > \epsilon \}}\frac{(A(x,u_2) - A(x,u_1)) \nabla u_2 \cdot \nabla(u_1 - u_2)}{L^2 (u_1 - u_2)^2}\,dx.
\end{align}
From \eqref{1143}, the fact that $t \mapsto A(.,t)$ is uniformly Lipschitz  and Cauchy-Schwartz inequality, we obtain
\begin{align*}
&\alpha \int_{\{u_1 - u_2 > \epsilon \}} \frac{|\nabla(u_1 - u_2)|^2}{L^2 (u_1 - u_2)^2} \notag \\
&\leq \int_{\{u_1 - u_2 > \epsilon \}}\frac{|(A(x,u_2) - A(x,u_1)|) |\nabla u_2|  |\nabla(u_1 - u_2)|}{L^2 (u_1 - u_2)^2}\,dx \notag \\
&\leq \int_{\{u_1 - u_2 > \epsilon \}}\frac{ |\nabla u_2|  |\nabla(u_1 - u_2)|}{L |(u_1 - u_2)|}\,dx \notag \\
&\leq \left[ \int_{\{u_1 - u_2 > \epsilon \}} \frac{|\nabla(u_1 - u_2)|^2}{L^2 (u_1 - u_2)^2} \right]^{1/2} \left[ \int_{\{u_1 - u_2 > \epsilon \}} |\nabla u_2|^2\,dx \right]^{1/2}.
\end{align*}
From this it follows that
\begin{align}\label{1144}
&\left[ \int_{\{u_1 - u_2 > \epsilon \}} \frac{|\nabla(u_1 - u_2)|^2}{L^2 (u_1 - u_2)^2} \right] \notag \\
&\leq \frac{1}{\alpha^2} \int_{\{u_1 - u_2 > \epsilon \}} |\nabla u_2|^2 \,dx \notag \\
&\leq \frac{1}{\alpha^2} \int_{\Omega} |\nabla u_2|^2 \,dx.
\end{align}
Set 
\[G_{\epsilon}(x)= \begin{cases} 
      \int_{0}^{\epsilon} \frac{dt}{Lt} & x \geq \epsilon\\
      0 & x \leq \epsilon \\
   \end{cases}
\]
With this definition, \eqref{1144} becomes
\begin{align}\label{1146}
\int_{\Omega} |\nabla G_\epsilon (u_1 - u_2)|^2\,dx \leq \frac{1}{\alpha^2} \int_\Omega |\nabla u_2|^2 \,dx.
\end{align}
Note that $G_\epsilon$ is a continuous piecewise $C^1$ function satisfying the assumptions of Corollary 2.15 in \cite{Chipot}. Thus by Corollary 2.15 in \cite{Chipot} we obtain
\begin{align}
G_\epsilon(u_1 - u_2) \in H^1_0(\Omega)
\end{align}
Use Poincare inequality in \eqref{1146} to get
\begin{align*}
\int_\Omega |G_{\epsilon}(u_1 - u_2)^2\,dx \leq C \int_\Omega |\nabla u_2|^2\,dx.
\end{align*}
We now pass through the limit in $\epsilon$ and use Fatou's lemma to get
\begin{equation*}
\int \limits_{\Omega} \liminf_{\epsilon \to 0} |G_{\epsilon}(u_1 - u_2)^2\,dx \leq C \int_\Omega |\nabla u_2|^2\,dx.
\end{equation*}
Hence 
\begin{equation*}
\liminf_{\epsilon \to 0} |G_{\epsilon}(u_1 - u_2)^2 < + \infty \mbox{ a.e }x \in \Omega.
\end{equation*}
The definition of $G_\epsilon$ then implies that
\begin{align*}
u_1 - u_2 \leq 0 \mbox{ a.e } x \in \Omega.
\end{align*}
Switch the roles of $u_1$ and $u_2$ to conclude that 
\begin{align*}
u_1 = u_2 \mbox{ a.e }x \in \Omega.
\end{align*}
\end{proof}
Theorem \ref{appendix} allows us to define the Dirichlet-to-Neumann map weakly. The DN map
\[
\Lambda_{\sigma}:H^{1/2}(\partial \Omega) \to H^{-1/2}(\partial \Omega)
\]
is defined weakly as 
\[
\langle \Lambda_{A}(f),g\rangle = \int \limits_{\Omega}A(x,u) \nabla u \cdot\nabla v \,dx
\]
where $u$ is the unique solution to \eqref{basiceqn} and $v$ is any $H^{1}(\Omega)$ function with trace $g$. This is the natural generalization of the Dirichlet-to-Neumann map to a quasi-linear equation of divergence type.

We end this section by stating a result on higher global regularity of solutions to the  quasilinear elliptic equation \eqref{basiceqn}.
\begin{lemma}[Regularity]\label{ub4}
Let $\Omega \subset \mathbb{R}^N$, $N \geq 2$ be a bounded domain with $C^{2,\gamma}$ boundary, $0<\gamma<1$. Let $A(x,t) = [a_{ij}(x,t)]_{N \times N}\in \mathcal{M}(\alpha,\beta,L;\overline{\Omega}\times\mathbb{R})$ and we further assume that $A \in C^{1,\gamma}(\overline{\Omega}  \times \mathbb{R})$, then the quasilinear boundary va
lue problem
\begin{align}\label{higherregularity}
\nabla \cdot A(x,u) \nabla u &= 0 \mbox{ in } \Omega \notag \\
 u|_{\partial \Omega} &= f 
\end{align}
has a unique solution $u \in C^{2,\gamma}(\overline{ \Omega})$ for any $f \in C^{2,\gamma}(\overline{ \Omega})$ satisfying for all $x,y\in\overline{\Omega}$ 
\[|u(x) - u(y)| \leq C_f |x-y|^\lambda\]
where $C_f=C(N,\Omega,\alpha,\beta,f)$ and $\lambda=\lambda(N,\Omega,\alpha,\beta,\gamma)$. \end{lemma}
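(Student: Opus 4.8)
The plan is to read off uniqueness directly from Theorem \ref{appendix} and to obtain existence together with the $C^{2,\gamma}$ regularity by upgrading, step by step, the weak solution already supplied by that theorem. Uniqueness is immediate: any $C^{2,\gamma}(\overline\Omega)$ solution of \eqref{higherregularity} is in particular a weak $H^1(\Omega)$ solution of \eqref{basiceqn}, and Theorem \ref{appendix} guarantees at most one such. So the substance is the regularity bootstrap, which I would organise around \emph{freezing the coefficient}: writing $\hat A(x):=A(x,u(x))$ turns \eqref{higherregularity} into the linear divergence-form equation $-\mathrm{div}(\hat A(x)\nabla u)=0$, to which linear elliptic theory applies with ellipticity constants $\alpha,\beta$.

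First I would record the $L^\infty$ bound. Since $A\in\mathcal M(\alpha,\beta,L;\overline\Omega\times\mathbb R)$, Theorem \ref{appendix} provides a weak solution $u\in H^1(\Omega)$; testing the frozen equation against $(u-\sup_{\partial\Omega}f)^+$ and $(\inf_{\partial\Omega}f-u)^+$ gives $\|u\|_{L^\infty(\Omega)}\le\|f\|_{L^\infty(\partial\Omega)}\le\|f\|_{C^{2,\gamma}(\overline\Omega)}=:M$, so only the values of $A$ on the compact set $\overline\Omega\times[-M,M]$, where $A$ is uniformly $C^{1,\gamma}$, ever enter. Next, the global De Giorgi--Nash--Moser estimate for the frozen equation (\cite[Theorem 8.29]{GT}), using that $\partial\Omega$ and $f|_{\partial\Omega}$ are $C^{2,\gamma}$, gives $u\in C^{0,\lambda}(\overline\Omega)$ together with the displayed bound $|u(x)-u(y)|\le C_f|x-y|^\lambda$, with $\lambda\in(0,1)$ depending only on $N,\Omega,\alpha,\beta,\gamma$ (shrink it so $\lambda\le\gamma$) and $C_f$ depending only on $N,\Omega,\alpha,\beta$ and $f$; this is exactly the claimed estimate. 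Since $A$ is Lipschitz on $\overline\Omega\times[-M,M]$ and $u\in C^{0,\lambda}$, the frozen coefficient $\hat A$ now lies in $C^{0,\lambda}(\overline\Omega)$, so the global Schauder estimate for divergence-form equations (\cite[Theorem 8.33]{GT}) upgrades $u$ to $C^{1,\lambda}(\overline\Omega)$.

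With $\nabla u$ bounded and Hölder I would conclude by passing to the non-divergence form
\[
a_{ij}(x,u)\,D_{ij}u \;=\; -\bigl[\partial_{x_i}a_{ij}(x,u)+\partial_t a_{ij}(x,u)\,D_i u\bigr]D_j u \;=:\; g(x),
\]
noting that $u$ is now Lipschitz, so $a_{ij}(x,u(\cdot))\in C^{0,1}(\overline\Omega)\subset C^{0,\gamma}(\overline\Omega)$ and $g\in C^{0,\gamma}(\overline\Omega)$ (after one further iteration of the divergence-form step if needed, to promote $\nabla u$ from $C^{0,\lambda}$ to Lipschitz). The global Schauder estimate for non-divergence equations with $C^{2,\gamma}$ data and boundary (\cite[Theorem 6.14]{GT}) then yields $u\in C^{2,\gamma}(\overline\Omega)$, and agreement with the weak solution of Theorem \ref{appendix} closes existence and uniqueness simultaneously.

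The part requiring genuine care is the genuinely quasi-linear term $\partial_t a_{ij}(x,u)D_i u\,D_j u$: it has quadratic growth in $\nabla u$, so before one knows $\nabla u$ is bounded it is merely integrable, not Hölder, and the non-divergence formulation is useless. This is why the argument must stay in divergence form --- maximum principle, then De Giorgi--Nash--Moser, then divergence-form Schauder --- until $u\in C^{1,\lambda}$ is in hand, and only then switch to the non-divergence equation for the final upgrade. The other bookkeeping point is that the exponent $\lambda$ and constant $C_f$ in the displayed estimate are produced \emph{only} by the De Giorgi--Nash--Moser step applied to the frozen linear equation, hence inherit dependence only on $N,\Omega,\alpha,\beta$ (and on $\gamma$, through the boundary regularity) and on $f$, even though the full $C^{2,\gamma}$ norm of $u$ additionally depends on $\|A\|_{C^{1,\gamma}(\overline\Omega\times[-M,M])}$.
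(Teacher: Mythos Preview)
Your proposal is correct, and it is more self-contained than the paper's own argument. The paper does not carry out the $C^{2,\gamma}$ bootstrap at all: it simply cites \cite{GT} or \cite{LYZ} for existence, uniqueness and the $C^{2,\gamma}(\overline\Omega)$ regularity, and devotes its proof only to the displayed H\"older bound with the stated dependencies of $C_f$ and $\lambda$. For that bound the paper subtracts the boundary data, setting $v=u-f$, so that $v$ solves the \emph{linear} problem $-\mathrm{div}(A(x,u)\nabla v)=\varphi$ with zero Dirichlet data and right-hand side $\varphi=-\nabla\cdot(A(x,u)\nabla f)$; treating $\varphi$ as an $L^p$ source for some $p>N/2$, De Giorgi--Nash--Moser yields $|v(x)-v(y)|\le C\|\varphi\|_{L^p}|x-y|^{\lambda'}$ with $C,\lambda'$ depending only on $N,\Omega,\alpha,\beta$, and the triangle inequality together with $f\in C^{2,\gamma}$ gives $\lambda=\min\{\lambda',\gamma\}$.

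Your route---freezing the coefficient as $\hat A(x)=A(x,u(x))$ and invoking \cite[Theorem~8.29]{GT} directly on the homogeneous equation with boundary datum $f$---is arguably cleaner: it avoids having to place $\varphi$ (which, once expanded, contains a $\partial_t a_{ij}(x,u)\,\partial_i u$ term) in $L^p$ for $p>N/2$, a step that in the paper's formulation implicitly uses $N\le 3$ since a priori only $\nabla u\in L^2$ is known. It also supplies the full regularity bootstrap rather than deferring it. What the paper's substitution buys is brevity and a transparent reason for the $\gamma$-dependence of $\lambda$ (via $\lambda=\min\{\lambda',\gamma\}$). Either way, both arguments deliver the point that actually matters for Theorem~\ref{theorem4.1}: $\lambda$ and $C_f$ depend only on $N,\Omega,\alpha,\beta,\gamma$ and $f$, and \emph{not} on $\|A\|_{C^{1,\gamma}}$, which is what makes the estimate uniform in $\epsilon$ when applied to the family $A^\epsilon$.
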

\begin{proof}
The existence and uniqueness result for the quasilinear boundary value problem considered in \eqref{higherregularity} can be found in  \cite{GT} or \cite{LYZ}. 

For proving the H\"older estimates on $u$, we let $v = u - f$ and consider the linear boundary value problem 
\begin{align*}
\nabla \cdot A(x,u)\nabla v &= - \nabla \cdot A(x,u) \nabla f \notag \\
v|_{\partial \Omega} &= 0
\end{align*}
Let $\varphi = -\nabla \cdot A(x,u) \nabla f $ then we note that $\varphi\in C^{\gamma}(\overline{\Omega})$. By considering $\varphi \in L^p(\Omega)$ for some $p>\frac{n}{2}$ we obtain, for all $x,y\in \overline{\Omega}$ 
\[
|v(x) - v(y)| \leq C ||\varphi||_{L^p(\Omega)}|x-y|^{\lambda^{\prime}}
\]
where $C=C(N,\Omega,\alpha,\beta)$ and $\lambda^{\prime}=\lambda^{\prime}(N,\Omega,\alpha,\beta)$. Now by using the triangle inequality on $u=v+f$ we conclude that $u$ is also H\"{o}lder continuous with the exponent $\lambda = min\{\lambda^{\prime},\gamma\}$. 
\end{proof}
\end{appendix}

\bibliographystyle{plain}
\bibliography{Master_bibfile}
\end{document}